\documentclass[11pt]{article}
\usepackage{geometry}
\geometry{verbose,tmargin=3.5cm,bmargin=3.5cm,lmargin=3.5cm,rmargin=3.5cm}
\usepackage{color}
\usepackage{float}
\usepackage{textcomp}
\usepackage{amsthm}
\usepackage{amsmath}
\usepackage{amssymb}%
\usepackage{multirow}
\usepackage{changes}

\usepackage{mathtools}
\usepackage{booktabs}    
\usepackage{subfigure} 
\usepackage{caption} 
\captionsetup[table]{singlelinecheck=false} 
\usepackage{multirow}
\usepackage{multicol}    
\usepackage{array}       
\usepackage{graphicx}
\graphicspath{{figuras/}}
\usepackage{empheq,amsmath}
\usepackage{hyperref}

\usepackage
[ 
lined, 
boxruled, 
commentsnumbered
]
{algorithm2e}
\DecMargin{0.1cm} 

%

\marginparwidth 0pt
\oddsidemargin  0pt
\evensidemargin  0pt
\marginparsep 0pt

\topmargin   -.5in
\hoffset -0.1in
\textwidth   6.6in
\textheight  8.6 in

\newtheorem{theorem}{Theorem}[section]
\newtheorem{lemma}[theorem]{Lemma}
\newtheorem{corollary}[theorem]{Corollary}

\DeclareMathOperator{\argmin}{argmin}

\DeclareMathOperator{\dom}{dom}

\newcommand{\R}{\mathbb{R}}

\newcommand{\la}{{\langle}}
\newcommand{\ra}{{\rangle}}

\newcommand{\bi}{\begin{itemize}}
\newcommand{\ei}{\end{itemize}}
\newcommand{\ba}{\begin{array}}
\newcommand{\ea}{\end{array}}

\def\beq{\begin{equation}}
\def\eeq{\end{equation}}
\def\ba{\begin{array}}
\def\ea{\end{array}}
\def\beann{\begin{eqnarray*}}
\def\eeann{\end{eqnarray*}}
\def\bea{\begin{eqnarray}}
\def\eea{\end{eqnarray}}

\def\QR{\hfill \Box}
\def\Def{\stackrel{\mathrm{def}}{=}}

\begin{document}

\title{\textbf{First and zeroth-order implementations\\ 
		of the regularized  
		Newton method with lazy approximated Hessians}}


\author{
	Nikita Doikov \thanks{\'{E}cole Polytechnique F\'{e}d\'{e}rale de Lausanne (EPFL),
		Machine Learning and Optimization Laboratory (MLO), Switzerland (nikita.doikov@epfl.ch). The work was supported by the Swiss State Secretariat for Education, Research and
		Innovation (SERI) under contract number 22.00133. }  \and 
	Geovani N. Grapiglia \thanks{Universit\'{e} catholique de Louvain (UCLouvain), 
		Institute of Information and Communication Technologies, Electronics and Applied Mathematics (ICTEAM/INMA), Belgium (geovani.grapiglia@uclouvain.be).} 
}

\date{ September 5, 2023 }

\maketitle

\begin{abstract}
In this work, we develop first-order (Hessian-free) 
and zero-order (derivative-free) implementations of the 
Cubically regularized Newton method
for solving general non-convex optimization problems.
For that, we employ finite difference approximations
of the derivatives. We use a special adaptive search procedure in our algorithms, 
which simultaneously fits both the regularization constant and the parameters of the finite difference
approximations.  It makes our schemes free from the need to know the actual Lipschitz constants.
Additionally, we equip our algorithms with the lazy Hessian update
that reuse a previously computed Hessian approximation
matrix for several iterations.
Specifically, we prove the global complexity bound of $\mathcal{O}( n^{1/2} \epsilon^{-3/2})$
function and gradient evaluations for our new Hessian-free method,
and a bound of $\mathcal{O}( n^{3/2} \epsilon^{-3/2} )$
function evaluations for the derivative-free method, where 
$n$ is the dimension of the problem and $\epsilon$ is the desired accuracy for the gradient norm.
These complexity bounds significantly improve the previously known ones
in terms of the joint dependence on $n$ and $\epsilon$, for the first-order and 
zeroth-order non-convex optimization.
\end{abstract}

\section{Introduction}

\paragraph{Motivation.}

The Newton Method is a powerful algorithm for solving
numerical optimization problems.
Employing the matrix of second derivatives (the Hessian of the objective),
the Newton Method is  able to efficiently tackle \textit{ill-conditioned problems},
which can be very difficult for solving by the first-order Gradient Methods.

While the Newton Method has been remaining popular for many decades due to
 its exceptional practical performance, the study of its \textit{global} complexity bounds
is relatively recent.
One of the most theoretically established versions of this method is
the Cubically Regularized Newton Method \cite{nesterov2006cubic},
that achieves a global complexity of the order $\mathcal{O}( \epsilon^{-3/2} ) $
for finding a second-order stationary point for non-convex objective
with Lipschitz continuous Hessian, where  $\epsilon > 0$ is the desired accuracy for the gradient norm.
The corresponding complexity of the Gradient Method \cite{nesterov2018lectures}
for non-convex functions with Lipschitz continuous gradient is $\mathcal{O}( \epsilon^{-2})$,
which is significantly worse.
Thus, the Cubic Newton Method (CNM) achieves a \textit{provable improvement} of the global complexity,
as compared to the first-order methods.

In the recent years, there were developed many efficient modifications of CNM,
including 
\textit{adaptive} and \textit{universal} methods
\cite{cartis2011adaptive1,cartis2011adaptive2,grapiglia2017regularized,grapiglia2019accelerated,doikov2021minimizing,doikov2022super}
that does not require to know the actual Lipschitz constant of the Hessian
and that can automatically adapt to the best problem class among the functions with H\"older continuous derivatives,
\textit{accelerated} second-order schemes
\cite{nesterov2008accelerating,monteiro2013accelerated,nesterov2018lectures,grapiglia2019accelerated,kovalev2022first,carmon2022optimal}
with even improved convergence rates for convex functions and matching the 
lower complexity bounds \cite{agarwal2018lower,nesterov2018lectures,arjevani2019oracle}.

Clearly, we pay a significant price for the better convergence rates of CNM which is:
computation of second derivatives and solving a more difficult subproblem in each step.
Note that for some of the most difficult modern applications, our available information 
about the objective function $f(\cdot)$
can be restricted to the black-box
\begin{center}
	\textit{First-order oracle:} $\;\; x \; \mapsto \; \{ f(x), \nabla f(x) \}$,
\end{center}
or even to
\begin{center}
	\textit{Zeroth-order oracle:} $\;\; x \;\; \mapsto \; \{ f(x) \}$,
\end{center}
without a direct access to the problem structure and any ability to compute the second derivatives $\nabla^2 f(x)$ exactly.
Thus, in this black-box scenarios, we are interested to use
optimization schemes which efficiently employ only the information we have an access to.

First-order implementations of CNM were proposed and analysed in \cite{cartis2012oracle} and \cite{grapiglia2022cubic}. 
In both of these works, the methods employ finite-difference Hessian approximations, and complexity bounds of $\mathcal{O}(n\epsilon^{-3/2})$ calls of the oracle were proved,
where $n$ is the dimension of the problem.
In \cite{cartis2012oracle}, a zeroth-order implementation of CNM was also proposed, for which the authors showed a complexity bound of $\mathcal{O}(n^{2}\epsilon^{-3/2})$ calls of the oracle. 
At each iteration, methods in \cite{cartis2012oracle} and \cite{grapiglia2022cubic} require the computation of one or more Hessian approximations. Recently, in \cite{doikov2023second}, a second-order variant of CNM with \textit{lazy Hessians} was proposed, in which the same Hessian matrix is reused during $m \geq 1$ 
consecutive iterations (as in \cite{shamanskii1967modification}). Remarkably, the method with lazy Hessians retains the iteration complexity bound of $\mathcal{O}(\epsilon^{-3/2})$ for nonconvex problems. Moreover, when $m=n$, it requires in the worst-case a number of Hessian evaluations smaller by a factor of $\sqrt{n}$ in comparison with the standard CNM.

In this paper, we efficiently combine the use of finite-differences with the reuse of previously computed Hessian approximations to obtain new first and zeroth-order implementations of the CNM. Specifically, our algorithms employ adaptive searches by which the regularization parameters in the models and the finite-difference intervals are simultaneously adjusted (as in \cite{grapiglia2022cubic}).  Additionally, to improve the total oracle complexity of our schemes, we employ the lazy Hessian updates \cite{doikov2023second}, reusing each Hessian approximation for several consecutive steps. As the result, we obtain purely first-order (Hessian-free) and zeroth-order (derivative-free) implementations
of CNM that are adaptive and need, respectively, at most
$\mathcal{O}(n^{1/2}\epsilon^{-3/2})$ and  $\mathcal{O}(n^{3/2}\epsilon^{-3/2})$ 
calls of the oracle to find an $\epsilon$-approximate second-order stationary point of the objective function.
These complexity bounds significantly improve the corresponding bounds in \cite{cartis2012oracle} and \cite{grapiglia2022cubic} in terms of the dependence on $n$.
Note that our new methods also support \textit{composite problem} formulation (as, e.g. in \cite{grapiglia2019accelerated}), which include both unconstrained minimization and minimization with respect to simple convex constraints or additive regularization. In its turn, the smooth (and the difficult) part of the problem
can be non-convex.
Finally, we report the result of preliminary numerical experiments that illustrate the practical efficiency of the proposed methods.

\paragraph{Contents.}
In Section~\ref{SectionCubic} we introduce the inexact step of CNM,
which is the main primitive of all our algorithmic schemes. 
Section~\ref{SectionFiniteDiff} is devoted to the {finite difference} approximations 
of the second- and first-order derivatives of a smooth functions.
In Section~\ref{SectionHF}, we present first-order (\textit{Hessian-free}) implementation of CNM
and establish its global complexity bounds.
Section~\ref{SectionZO} contains zeroth-order (\textit{derivative-free}) implementation of CNM.
In Section~\ref{SectionLocal}, we establish local superlinear convergence for our schemes.
Section~\ref{SectionExperiments} presents illustrative numerical experiments. In Section~\ref{SectionDiscussion},
we discuss our results.

\paragraph{Notation and Assumptions.}
By $\| \cdot \|$ we denote
the standard Euclidean norm for vectors and the spectral norm for matrices,
while notation $\| \cdot \|_F$ is reserved for the matrix Frobenius norm.
We denote by $e_1, \ldots, e_n$ the standard basis vectors in $\R^n$.

We want to solve the following minimization problem
\beq \label{MainProblem}
\ba{rcl}
\min\limits_{x \in Q} \Bigl\{ 
F(x) & \Def &
f(x) + \psi(x) \Bigr\},
\ea
\eeq
where $Q \Def \dom \psi \subseteq \R^n$.
Function
$f : \R^n \to \R$ is a twice continuously differentiable, potentially
\textit{non-convex},
while the composite part $\psi: \R^n \to \R \cup \{ +\infty \}$ is a \textit{simple}
proper, closed, and convex, but
possibly non-differentiable (e.g. indicator of a given closed convex set $Q$).

Therefore, our goal  is to find a point $\bar{x} \in Q$ with a small (sub)gradient norm:
\beq \label{InexactSolution}
\ba{rcl}
\| \nabla f(\bar{x}) + \psi'(\bar{x}) \| & \leq & \epsilon,
\ea
\eeq
where $\psi'(\bar{x}) \in \partial \psi(\bar{x})$
and $\epsilon > 0$ is a desired tolerance.
We are aiming to find a point satisfying \eqref{InexactSolution},
using only \textit{first-order} or \textit{zeroth-order}   black-box oracle calls for $f$.
At the same time, the composite component $\psi$ is assumed
to be simple enough, such that the corresponding auxiliary minimization problems
that involve $\psi$ can be efficiently solved
(we present the form of the subproblem that we require to solve explicitly in the next section).

We assume that $F$ is bounded from below on $Q$ and denote 
$$
\ba{rcl}
F^{\star}  & \Def & 
\inf\limits_{x \in Q} F(x)
\;\; > \;\; -\infty.
\ea
$$
To characterize the smoothness of the differentiable part of the objective,
we assume the following:

\textbf{A1} The Hessian of $f$ is Lipschitz continuous, i.e.,
\beq \label{LipHess}
\ba{rcl}
\|\nabla^{2}f(y)-\nabla^{2}f(x)\|
& \leq & L\|y-x\|,\qquad \forall x,y \in \R^n,
\ea
\eeq
where $L \geq 0$ is the Lipschitz constant. Note that in all our methods, we do not need to know the exact value of $L$, estimating it 
\textit{automatically} with an adaptive procedure.

\section{Inexact Cubic Newton Step}
\label{SectionCubic}

In this section, we analyze one step of the Cubically regularized Newton method
with an \textit{approximate} second-order and first-order information. We also
assume that the step of the method is computed \textit{inexactly},
which would allow to apply our methods in the large scale setting.

Given $x \in Q$ and $\sigma>0$, let us define the models for $f(y)$ around $x$,
exact second-order model with cubic regularization:
\begin{equation}
\ba{rcl}
\Omega_{x,\sigma}(y)
& \Def & 
f(x) + \langle \nabla f(x),y-x \rangle
+ \dfrac{1}{2}\langle\nabla^{2}f(x)(y-x),y-x\rangle
+ \dfrac{\sigma}{6}\|y-x\|^{3},
\label{ExactModel}
\ea
\end{equation}
and an \textit{approximate model}:
\begin{equation}
\ba{rcl}
M_{x,\sigma}(y) & \Def & 
f(x)+\langle g,y-x\rangle+\dfrac{1}{2}\langle B(y-x),y-x\rangle+\dfrac{\sigma}{6}\|y-x\|^{3},
\ea
\label{ApproxModel}
\end{equation}
where $g\in\mathbb{R}^{n}$ is an approximation to $\nabla f(x)$ and $B\in\mathbb{R}^{n\times n}$ is an approximation to $\nabla^{2}f(z)$,
with some previous point $z \in \R^n$ from the past. In the simplest case, we can set $z := x$.
However, to reduce the iteration cost of our methods, we will use the same anchor point $z$ for several iterations
(that we call \textit{lazy Hessian updates}).

Note that due to the cubic regularizer, 
we can minimize model \eqref{ApproxModel} globally even when the quadratic part is non-convex.
Efficient techniques for solving such subproblems by using Linear Algebra tools or
gradient-based solvers
were extensively developed in the context of trust-region methods \cite{conn2000trust}
and for the Cubically regularized Newton methods
\cite{nesterov2006cubic,cartis2011adaptive1,cartis2011adaptive2,carmon2019gradient}.

Let us consider a minimizer 
for our approximate  model \eqref{ApproxModel} augmented  by the composite component:
\beq \label{CompositeSubproblem}
\boxed{
\ba{rcl}
x^+ & \approx & \argmin\limits_{y \in Q} 
\Bigl\{ M_{x, \sigma}(y) + \psi(y) \Bigr\}
\ea
}
\eeq
We will use such point $x^+$ as the main iteration step in all our methods.

Note that if $x^+$ is an \textit{exact solution} to \eqref{CompositeSubproblem},  then
the following first-order optimality condition holds (see, e.g. Theorem 3.1.23 in \cite{nesterov2018lectures}):
\beq \label{ExactStatCond}
\ba{rcl}
\la g + B(x^+ - x) + \frac{\sigma}{2}\|x^+ - x\| (x^+ - x), y - x^+ \ra
+ \psi(y) & \geq & \psi(x^+),
\quad \forall y \in Q.
\ea
\eeq
Hence, we have an explicit expression for a specific subgradient of $\psi$ at new point:
$$
\ba{rcl}
- g - B(x^+ - x) - \frac{\sigma}{2}\|x^+ - x\| (x^+ - x)
& \overset{\eqref{ExactStatCond}}{\in} & \partial \psi(x^+).
\ea
$$
Thus, usually for any solver of \eqref{CompositeSubproblem},
along with $x^+$ we are able to compute the corresponding subgradient vector
as well.

In what follows, we will consider \textit{inexact minimizers} of our model. 
First, we provide the bound for the new gradient norm.

\begin{lemma} \label{LemmaNewGrad}
	Let $x^{+}$ be an inexact minimizer of subproblem \eqref{CompositeSubproblem} satisfying the following condition, for some $\theta \geq 0$:
	\beq \label{InexactCond}
	\ba{rcl}
	\| \nabla M_{x, \sigma}(x^{+}) + \psi'(x^{+}) \| & \leq & \theta \|x^{+} - x\|^2,
	\ea
	\eeq
	for a certain $\psi'(x^{+})  \in \partial \psi(x^{+})$.
	Let, for some $\delta_g, \delta_B \geq 0$, it hold that
	\beq \label{GradHessApprox}
	\ba{rcl}
	\|g - \nabla f(x) \| & \leq & \delta_g, \\
	\\
	\| B - \nabla^2 f(z) \| & \leq & \delta_B.
	\ea
	\eeq
	Then, we have
	\beq \label{NewGradBound}
	\ba{rcl}
	\| \nabla f(x^{+}) + \psi'(x^{+})  \| & \leq & 
	\bigl(  \theta + \frac{\sigma + L}{2} \bigr) r^2
	\, + \, \bigl( \delta_B +  L \|x - z\| \bigr) r \, + \,  \delta_g,
	\ea
	\eeq
	where $r := \|x^+ - x\|$.
\end{lemma}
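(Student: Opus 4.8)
The goal is to bound $\|\nabla f(x^+) + \psi'(x^+)\|$. The natural strategy is to use the triangle inequality to split this into a term we control via the inexactness condition \eqref{InexactCond} and an error term measuring how much the gradient of the model $M_{x,\sigma}$ differs from the true gradient $\nabla f$ at $x^+$. So I would start by writing
$$
\|\nabla f(x^+) + \psi'(x^+)\| \;\leq\; \|\nabla M_{x,\sigma}(x^+) + \psi'(x^+)\| \;+\; \|\nabla f(x^+) - \nabla M_{x,\sigma}(x^+)\|.
$$
The first term on the right is bounded by $\theta r^2$ directly from \eqref{InexactCond}, so the whole task reduces to estimating the second term, the model-gradient mismatch.

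Next, compute $\nabla M_{x,\sigma}(x^+)$ explicitly from \eqref{ApproxModel}: it equals $g + B(x^+ - x) + \frac{\sigma}{2}\|x^+ - x\|(x^+ - x)$. Therefore
$$
\nabla f(x^+) - \nabla M_{x,\sigma}(x^+) = \bigl(\nabla f(x^+) - \nabla f(x) - \nabla^2 f(x)(x^+-x)\bigr) + \bigl(\nabla f(x) - g\bigr) + \bigl(\nabla^2 f(x) - B\bigr)(x^+ - x) - \tfrac{\sigma}{2}\|x^+-x\|(x^+-x),
$$
and I would bound each group separately. The first parenthesis is the standard Taylor remainder, bounded under A1 by $\frac{L}{2}\|x^+ - x\|^2 = \frac{L}{2}r^2$ (Lipschitz Hessian gives the quadratic error of the first-order Taylor model). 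The $\nabla f(x) - g$ term is at most $\delta_g$ by \eqref{GradHessApprox}. The cubic-regularizer term has norm exactly $\frac{\sigma}{2}r^2$. The remaining term is $\|(\nabla^2 f(x) - B)(x^+-x)\| \leq \|\nabla^2 f(x) - B\|\, r$; here I must further split $\nabla^2 f(x) - B = (\nabla^2 f(x) - \nabla^2 f(z)) + (\nabla^2 f(z) - B)$, using A1 on the first piece to get $\|\nabla^2 f(x) - \nabla^2 f(z)\| \leq L\|x-z\|$ and \eqref{GradHessApprox} on the second to get $\delta_B$. This contributes $(L\|x-z\| + \delta_B)\,r$.

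Collecting all the pieces: the $r^2$ terms are $\theta r^2$ (from inexactness), $\frac{L}{2}r^2$ (Taylor), and $\frac{\sigma}{2}r^2$ (cubic term), giving coefficient $\theta + \frac{\sigma+L}{2}$; the $r$ terms combine to $(\delta_B + L\|x-z\|)r$; and the constant term is $\delta_g$. This matches \eqref{NewGradBound} exactly. There is no real obstacle here — this is a routine triangle-inequality bookkeeping argument — but the one point requiring minor care is the decomposition of $\nabla^2 f(x) - B$ through the anchor point $z$, since the Hessian approximation $B$ is built around $z$ rather than $x$; forgetting this intermediate step would lose the $L\|x-z\|$ term, which is precisely the error introduced by the lazy Hessian update and is essential for the later complexity analysis.
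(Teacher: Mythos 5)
Your proposal is correct and follows essentially the same route as the paper's proof: a triangle-inequality decomposition isolating the inexactness term $\theta r^2$, the Taylor remainder $\tfrac{L}{2}r^2$, the cubic-regularizer term $\tfrac{\sigma}{2}r^2$, the gradient-approximation error $\delta_g$, and the Hessian mismatch split through the anchor point $z$ to yield $(\delta_B + L\|x-z\|)r$. The only cosmetic difference is that the paper inserts the exact model $\nabla\Omega_{x,\sigma}(x^+)$ as an explicit intermediate term in the triangle inequality, whereas you expand $\nabla M_{x,\sigma}(x^+)$ directly; the individual bounds are identical.
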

\proof
Indeed, 
$$
\ba{rcl}
\| \nabla f(x^{+}) + \psi'(x^{+})  \|
& \leq & 
\| \nabla f(x^{+}) - \nabla \Omega_{x, \sigma}(x^{+}) \|
+ \| \nabla \Omega_{x, \sigma}(x^+)
- \nabla M_{x, \delta}(x^+) \| \\
\\
& & \quad
+ \; \| \nabla M_{x, \sigma}(x^+) + \psi'(x^+) \| \\
\\
& = &
\bigl\| \nabla f(x^+) - \nabla f(x) - \nabla^2 f(x)(x^+ - x) - \frac{\sigma}{2}r(x^+ - x)  \bigr\| \\
\\
& & \quad
+ \; \| \nabla f(x) - g + (\nabla^2 f(x) - B)(x^+ - x) \| + \| \nabla M_{x, \sigma}(x^+) + \psi'(x^{+})  \| \\
\\
& \overset{\eqref{LipHess}, \eqref{InexactCond}}{\leq} &
\bigl( \theta +  \frac{\sigma + L}{2} \bigr) r^2 
\, + \, \| \nabla^2 f(x) - B \| r 
\, + \, \| \nabla f(x) - g\| \\
\\
& \overset{\eqref{LipHess}, \eqref{GradHessApprox}}{\leq} &
\bigl( \theta +  \frac{\sigma + L}{2} \bigr) r^2 
\, + \, \bigl(  \delta_B + L\|x - z\| \bigr) r
\, + \, \delta_g. \QR
\ea
$$

Now, we can express the progress of one step in terms of the objective function value.

\begin{lemma} \label{LemmaNewFunc}
	Let $x^+$ satisfy the following condition: 
	\beq \label{XplusCond}
	\ba{rcl}
	M_{x, \sigma}(x^{+}) + \psi(x^+) & \leq & F(x), 
	\ea
	\eeq
	and let $g$ and $B$ satisfy \eqref{GradHessApprox} for some $\delta_g, \delta_B \geq 0$.
	Then, we have
	\beq \label{NewFuncProgress}
	\ba{rcl}
	F(x) - F(x^+) & \geq & 
	\frac{\sigma - L}{6}r^3
	- \frac{1}{2}(\delta_B + L\|x - z\|) r^2
	- \delta_g r,
	\ea
	\eeq
	where $r := \|x^+ - x\|$.
\end{lemma}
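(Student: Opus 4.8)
The plan is to bound $F(x) - F(x^+) = f(x) + \psi(x) - f(x^+) - \psi(x^+)$ from below by first replacing the true objective at $x^+$ with the exact cubic model $\Omega_{x,\sigma}(x^+)$, then with the approximate model $M_{x,\sigma}(x^+)$, tracking the errors introduced at each replacement via \textbf{A1} and the approximation bounds \eqref{GradHessApprox}. The key inequality is the standard consequence of Lipschitz continuity of the Hessian, namely $f(x^+) \le f(x) + \langle \nabla f(x), x^+ - x\rangle + \frac12 \langle \nabla^2 f(x)(x^+-x), x^+-x\rangle + \frac{L}{6}r^3$, which says $f(x^+) \le \Omega_{x,L}(x^+)$. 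Since $\Omega_{x,L}(x^+) = \Omega_{x,\sigma}(x^+) + \frac{L-\sigma}{6}r^3$, this gives
\[
f(x^+) \;\le\; \Omega_{x,\sigma}(x^+) \;-\; \frac{\sigma - L}{6}r^3.
\]

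Next I would pass from the exact model to the approximate one. Their difference is $\Omega_{x,\sigma}(x^+) - M_{x,\sigma}(x^+) = \langle \nabla f(x) - g, x^+ - x\rangle + \frac12\langle (\nabla^2 f(x) - B)(x^+-x), x^+-x\rangle$, and by Cauchy--Schwarz together with \eqref{GradHessApprox} this is bounded in absolute value by $\delta_g r + \frac12 \|\nabla^2 f(x) - B\| r^2$. Here the term $\|\nabla^2 f(x) - B\|$ is controlled using the triangle inequality $\|\nabla^2 f(x) - B\| \le \|\nabla^2 f(x) - \nabla^2 f(z)\| + \|\nabla^2 f(z) - B\| \le L\|x - z\| + \delta_B$, invoking \textbf{A1} for the first term and \eqref{GradHessApprox} for the second — exactly as in the proof of Lemma~\ref{LemmaNewGrad}. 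Combining,
\[
f(x^+) \;\le\; M_{x,\sigma}(x^+) \;-\; \frac{\sigma - L}{6}r^3 \;+\; \frac12\bigl(\delta_B + L\|x-z\|\bigr) r^2 \;+\; \delta_g r.
\]

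Finally I would add $\psi(x^+)$ to both sides and apply the hypothesis \eqref{XplusCond}, which says $M_{x,\sigma}(x^+) + \psi(x^+) \le F(x)$, to get $f(x^+) + \psi(x^+) \le F(x) - \frac{\sigma-L}{6}r^3 + \frac12(\delta_B + L\|x-z\|)r^2 + \delta_g r$, i.e. $F(x^+) \le F(x) - \frac{\sigma-L}{6}r^3 + \frac12(\delta_B + L\|x-z\|)r^2 + \delta_g r$, which rearranges to \eqref{NewFuncProgress}. I do not expect a genuine obstacle here: the argument is a routine chain of triangle inequalities and the two standard quadratic/cubic Taylor-type estimates from Lipschitz continuity of the Hessian. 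The only point requiring a little care is bookkeeping the $\|x - z\|$ term correctly — the Hessian approximation $B$ is taken at the lazy anchor $z$, not at $x$, so the bound must account for the drift $\|x - z\|$, which is precisely where the lazy-Hessian penalty enters the progress estimate.
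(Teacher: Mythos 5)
Your proposal is correct and follows essentially the same route as the paper's proof: bound $F(x^+)$ by $\Omega_{x,L}(x^+)+\psi(x^+)$ via \textbf{A1}, rewrite this as $M_{x,\sigma}(x^+)+\psi(x^+)$ plus the gradient/Hessian discrepancy terms and the $\frac{L-\sigma}{6}r^3$ correction, control the Hessian discrepancy through the anchor point $z$ by the triangle inequality, and finish with hypothesis \eqref{XplusCond}. No gaps.
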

\proof
Indeed, we have
$$
\ba{rcl}
F(x^+) & \overset{\eqref{LipHess}}{\leq} &
\Omega_{x, L}(x^+) + \psi(x^+) \\
\\
& = & 
f(x) + \la \nabla f(x), x^{+} - x \ra
+ \frac{1}{2} \la \nabla^2 f(x)(x^+ - x), x^+ - x \ra
+ \frac{L}{6}\|x^+ - x\|^3 + \psi(x^+) \\
\\
& = & 
M_{x, \sigma}(x^+)
+ \la \nabla f(x) - g, x^{+} - x \ra
+ \frac{1}{2} \la (\nabla^2 f(x) - B)(x^+ - x), x^+ - x \ra \\
\\
& & \quad
\; + \; \frac{L - \sigma}{6}\|x^+ - x\|^3 + \psi(x^+) \\
\\
& \overset{\eqref{XplusCond}, \eqref{GradHessApprox}, \eqref{LipHess}}{\leq} &
F(x)
+ \delta_g r
+ \frac{1}{2} (\delta_B + L\|x - z\|) r^2
+ \frac{L - \sigma}{6} r^3, 
\ea
$$
and this is \eqref{NewFuncProgress}.
\qed

Finally, we analyze the smallest eigenvalues for the Hessian of our problem. Let us 
consider the case when
the composite part $\psi$ is \textit{twice differentiable}, so the Hessian
of the full objective in \eqref{MainProblem} is well-defined. 
Then, we denote
\beq \label{XiDef}
\ba{rcl}
\xi(y) & \Def & 
\max\Bigl\{  -\lambda_{\min}( \nabla^2 F(y) ), 0  \Bigr\}, \qquad y \in Q.
\ea
\eeq
Thus, the value of $\xi(y) \geq 0$ indicates how big is the negative part of the smallest eigenvalue of the Hessian
at point $y$.

Note that if $x^{+}$ is an \textit{exact solution} to our subproblem \eqref{CompositeSubproblem},
we can use the following second-order optimality condition (see, e.g. Theorem 1.2.2 in \cite{nesterov2018lectures}):
\beq \label{SOStat}
\ba{rcl}
B + \frac{\sigma}{2} \|x^+ - x\| I + \frac{\sigma}{2r} (x^+ - x) (x^+ - x)^{\top} + \nabla^2 \psi(x^+) & \succeq & 0,
\ea
\eeq
where $I$ is identity matrix. In order to provide the guarantee for $\xi(x^+)$,
we can use the relaxed version of \eqref{SOStat}.

\begin{lemma} \label{LemmaNewHess}
	Let $\psi$ be twice differentiable. Let $x^+$ satisfy the following
	condition, for some $\theta \geq 0$:
	\beq \label{HessInexactCondition}
	\ba{rcl}
	B + \theta \|x^+ - x\| I + \nabla^2 \psi(x^+) & \succeq & 0.
	\ea
	\eeq
	Let, for some $\delta_B \geq 0$, it hold that
	\beq \label{SOInexHess}
	\ba{rcl}
	\| B - \nabla^2 f(z) \| & \leq & \delta_B.
	\ea
	\eeq 
	Then, we have
	\beq \label{XiBound}
	\ba{rcl}
	\xi(x^+) & \leq & 
	(L + \theta) r + L\|x - z\| + \delta_B.
	\ea
	\eeq
	where $r := \|x^+ - x\|$.
\end{lemma}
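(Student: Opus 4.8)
The plan is to bound $\xi(x^+) = \max\{-\lambda_{\min}(\nabla^2 F(x^+)), 0\}$ by comparing the true Hessian $\nabla^2 F(x^+) = \nabla^2 f(x^+) + \nabla^2 \psi(x^+)$ against the matrix $B + \nabla^2 \psi(x^+)$ that appears in the relaxed optimality condition \eqref{HessInexactCondition}. The chain of comparisons has three links, each controlled by one of the error sources: (i) the inexactness of the subproblem solution, which contributes $\theta r$ via the $\theta \|x^+-x\| I$ term; (ii) the Hessian approximation error $\|B - \nabla^2 f(z)\| \leq \delta_B$; and (iii) the displacement between the anchor point $z$ and the new point $x^+$, which by Lipschitz continuity \eqref{LipHess} contributes $L\|x^+ - z\|$.

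First I would write, for an arbitrary unit vector $h \in \R^n$,
\beann
\la \nabla^2 F(x^+) h, h \ra
& = & \la (\nabla^2 f(x^+) - \nabla^2 f(z)) h, h \ra
+ \la (\nabla^2 f(z) - B) h, h \ra \\
& & \quad + \; \la (B + \nabla^2 \psi(x^+)) h, h \ra.
\eeann
The last term is $\geq -\theta \|x^+ - x\| \norm{h}^2 = -\theta r$ by \eqref{HessInexactCondition}. The middle term is $\geq -\delta_B$ by \eqref{SOInexHess}. For the first term, Lipschitz continuity \eqref{LipHess} gives $\|\nabla^2 f(x^+) - \nabla^2 f(z)\| \leq L\|x^+ - z\|$, so it is $\geq -L\|x^+ - z\|$. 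Combining, $\lambda_{\min}(\nabla^2 F(x^+)) \geq -\theta r - \delta_B - L\|x^+ - z\|$, hence $\xi(x^+) \leq \theta r + \delta_B + L\|x^+ - z\|$.

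The remaining step is to replace $\|x^+ - z\|$ by $r + \|x - z\|$ via the triangle inequality $\|x^+ - z\| \leq \|x^+ - x\| + \|x - z\| = r + \|x - z\|$, which turns the bound into $\xi(x^+) \leq \theta r + \delta_B + L(r + \|x-z\|) = (L+\theta)r + L\|x-z\| + \delta_B$, exactly \eqref{XiBound}. I do not anticipate a genuine obstacle here; the only point requiring a little care is justifying the relaxed condition \eqref{HessInexactCondition} as a legitimate inexactness requirement — it is the analogue for the Hessian of the gradient inexactness condition \eqref{InexactCond}, and in the exact case it follows from the second-order optimality condition \eqref{SOStat} since the rank-one term $\frac{\sigma}{2r}(x^+-x)(x^+-x)^\top$ is positive semidefinite and $\frac{\sigma}{2}\|x^+-x\| I \preceq \theta \|x^+ - x\| I$ whenever $\theta \geq \sigma/2$. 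That observation also clarifies why the statement is phrased with a free parameter $\theta$ rather than $\sigma/2$.
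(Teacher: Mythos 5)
Your proof is correct and follows essentially the same route as the paper's: both decompose $\nabla^2 F(x^+)$ into the three error sources (Lipschitz drift of the Hessian, the approximation error $\delta_B$, and the relaxed second-order condition) and add the resulting lower bounds on the smallest eigenvalue. The only cosmetic difference is that you apply the Lipschitz bound directly over $\|x^+ - z\|$ and then use the triangle inequality, whereas the paper splits the Lipschitz step through the intermediate point $x$ first; the two are identical in substance.
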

\proof
Using Lipschitzness of the Hessian of $f$ \eqref{LipHess}, we have
$$
\ba{rcl}
\nabla^2 F(x^+) & \succeq & 
\nabla^2 f(x) + \nabla^2 \psi(x^+) - Lr I \\
\\
& \succeq &
\nabla^2 f(z) + \nabla^2 \psi(x^+) - (Lr + L\|x - z\|) I \\
\\
& \overset{\eqref{SOInexHess}}{\succeq} &
B + \nabla^2 \psi(x^+)  - (Lr + L\|x - z\| - \delta_B) I \\
\\
& \overset{\eqref{HessInexactCondition}}{\succeq} &
- ( Lr + \theta r + L \|x - z\| + \delta_B ) I,
\ea
$$ 
which leads to \eqref{XiBound}.
\qed

Let us combine all our lemmas together. We justify the following bound for the progress of one step
for our inexact composite Cubic Newton Method (CNM):

\begin{theorem} \label{ThStep}
	Let $\sigma \geq 2L$. 
	Let $x^+$ be an inexact minimizer of model \eqref{ApproxModel} satisfying the following two conditions,
	for a certain $\psi'(x^+) \in \partial \psi(x^+)$:
	\beq \label{InexactThStep}
	\ba{rcl}
	\| \nabla M_{x, \sigma}(x^+) + \psi'(x^+) \| & \leq & \frac{\sigma}{4}\|x^+ - x\|^2, \\
	\\
	M_{x, \sigma}(x^+) + \psi(x^+) & \leq & F(x),
	\ea
	\eeq
	where $g$ and $B$  satisfy \eqref{GradHessApprox} for some $\delta_g, \delta_B \geq 0$. Then, we have
	\beq \label{InexactProgressThStep}
	\ba{rcl}
	F(x) - F(x^+) 
	& \geq &
	\frac{1}{3 \cdot 2^6 \sigma^{1/2}} \| \nabla f(x^+) + \psi'(x^+) \|^{3/2} +
	\mathcal{E},
	\ea
	\eeq 
	where 
	$$
	\ba{rcl}
	\mathcal{E} & \Def & \frac{\sigma}{48} \|x^+ - x\|^3
	- \frac{171}{\sigma^2}\Bigl[ \delta_B^3 + L^3 \|x - z\|^3  \Bigr]
	- \frac{3}{\sigma^{1/2}} \delta_g^{3/2}.
	\ea
	$$
	Assume additionally that $\psi$ is twice differentiable, and $x^+$ satisfies the following extra condition:
	\beq \label{InexatSOThStep}
	\ba{rcl}
	B + \sigma \|x^+ - x\| I + \nabla^2 \psi(x^+) & \succeq & 0.
	\ea
	\eeq
	Then, we can improve \eqref{InexactProgressThStep}, as follows:
	\beq \label{InexactProgressSOThStep}
	\ba{rcl}
		F(x) - F(x^+) 
	& \geq &
	\max\Bigl\{
	\frac{1}{3 \cdot 2^6 \sigma^{1/2}} \| \nabla f(x^+) + \psi'(x^+) \|^{3/2},
	\;
	\frac{1}{2 \cdot 3^6 \sigma^2} \bigl[ \xi(x^+) \bigr]^3
	\Bigr\}
	+ \mathcal{E}.
	\ea
	\eeq
	
\end{theorem}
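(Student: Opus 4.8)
The plan is to combine the three preceding lemmas (Lemma~\ref{LemmaNewGrad}, Lemma~\ref{LemmaNewFunc}, Lemma~\ref{LemmaNewHess}) with the hypothesis $\sigma \geq 2L$ and the inexactness conditions, and then to absorb the error terms into a clean lower bound via elementary scalar inequalities. Write $r := \|x^+ - x\|$ throughout. First I would apply Lemma~\ref{LemmaNewFunc} with the choice $\theta = \sigma/4$ coming from \eqref{InexactThStep}, which gives
\beq \label{PlanStep1}
\ba{rcl}
F(x) - F(x^+) & \geq & \dfrac{\sigma - L}{6} r^3 - \dfrac{1}{2}(\delta_B + L\|x-z\|) r^2 - \delta_g r
\;\; \geq \;\; \dfrac{\sigma}{12} r^3 - \dfrac{1}{2}(\delta_B + L\|x-z\|) r^2 - \delta_g r,
\ea
\eeq
where the last step uses $\sigma - L \geq \sigma/2$. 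Then I would split the cubic term $\tfrac{\sigma}{12} r^3 = \tfrac{\sigma}{48}r^3 + 3 \cdot \tfrac{\sigma}{48} r^3$ and use the first three copies of $\tfrac{\sigma}{48} r^3$ to dominate, respectively, the $r^2$-term involving $\delta_B$, the $r^2$-term involving $L\|x-z\|$, and the linear $\delta_g r$-term. Each of these is a Young-type inequality: $a r^2 \leq \tfrac{\sigma}{48}r^3 + c \sigma^{-2} a^3$ for an explicit constant $c$, and similarly $\delta_g r \leq \tfrac{\sigma}{48} r^3 + c' \sigma^{-1/2} \delta_g^{3/2}$. Tracking constants carefully should produce exactly the error term $\mathcal{E}$ as stated (the factor $171$ and the $\delta_g^{3/2}$ coefficient $3$ are what these Young constants evaluate to), leaving one spare copy $\tfrac{\sigma}{48} r^3$ plus the reserved $\tfrac{\sigma}{48}r^3$, i.e. a net $\tfrac{\sigma}{24} r^3$ of genuinely positive progress to convert into the gradient bound.

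Next I would invoke Lemma~\ref{LemmaNewGrad}, again with $\theta = \sigma/4$ and $\sigma \geq 2L$ so that $\theta + \tfrac{\sigma+L}{2} \leq \tfrac{\sigma}{4} + \tfrac{3\sigma}{4} = \sigma$, to get
\beq \label{PlanStep2}
\ba{rcl}
\| \nabla f(x^+) + \psi'(x^+) \| & \leq & \sigma r^2 + (\delta_B + L\|x-z\|) r + \delta_g.
\ea
\eeq
The point is that I do not need to be clever here: I will just bound $\|\nabla f(x^+)+\psi'(x^+)\|^{3/2} \lesssim \sigma^{3/2} r^3 + (\delta_B + L\|x-z\|)^{3/2} r^{3/2} + \delta_g^{3/2}$ using $(a+b+c)^{3/2} \leq 3^{1/2}(a^{3/2} + b^{3/2} + c^{3/2})$ (or a similar crude constant), then handle the mixed middle term by yet another Young inequality to trade $r^{3/2}$ against a further slice of $r^3$ at the cost of an additional $\delta_B^3 + L^3\|x-z\|^3$ contribution — this is why the numerical constants in $\mathcal{E}$ are somewhat large. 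What remains is a pure $\sigma^{3/2} r^3$ term, and dividing \eqref{PlanStep1}'s spare progress by the appropriate power of $\sigma$ yields the coefficient $\tfrac{1}{3 \cdot 2^6 \sigma^{1/2}}$ in front of $\|\nabla f(x^+)+\psi'(x^+)\|^{3/2}$. Bookkeeping of these constants is the main tedious obstacle, but it is entirely mechanical.

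For the second, improved bound \eqref{InexactProgressSOThStep}, I would observe that condition \eqref{InexatSOThStep} is exactly the hypothesis of Lemma~\ref{LemmaNewHess} with $\theta = \sigma$, so $\xi(x^+) \leq (L + \sigma) r + L\|x-z\| + \delta_B \leq 2\sigma r + L\|x-z\| + \delta_B$ (using $L \leq \sigma$). Hence $[\xi(x^+)]^3 \lesssim \sigma^3 r^3 + L^3\|x-z\|^3 + \delta_B^3$, and after dividing by the right power of $\sigma$ the $\sigma^3 r^3$-term is again controlled by a slice of the positive cubic progress while the $L^3\|x-z\|^3$ and $\delta_B^3$ pieces are absorbed into $\mathcal{E}$. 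To get the $\max$ of the two lower bounds rather than just each one separately, I would go back and reserve \emph{two} independent spare copies of $\tfrac{\sigma}{48}r^3$ at the splitting stage — one earmarked for the gradient term and one for the eigenvalue term — so that each derived quantity is bounded by its own reserved slice without competing for the same progress. Arranging the split of $\tfrac{\sigma}{12} r^3$ into enough pieces (six rather than four) so that all error absorptions \emph{and} both positive bounds fit simultaneously, while still landing exactly on the stated constants, is the one place where care is genuinely required; the rest is routine scalar estimation. \qed
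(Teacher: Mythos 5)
Your overall route is the paper's route: combine Lemmas~\ref{LemmaNewFunc}, \ref{LemmaNewGrad} and \ref{LemmaNewHess} with $\theta=\sigma/4$ (resp.\ $\theta=\sigma$ for the eigenvalue part), use $\sigma\ge 2L$ to reduce $\frac{\sigma-L}{6}r^3$ to $\frac{\sigma}{12}r^3$, absorb the $r^2$- and $r$-error terms by Young's inequality, and convert a remaining slice of $\sigma r^3$ into the gradient and eigenvalue terms via a Jensen-type power inequality plus a further Young step. This matches the paper's proof in all essentials, including where the constants come from ($171$ is essentially $\frac{2^9}{3}\approx 170.7$ plus small leftovers, and $3\cdot 2^6=192$ arises as $48\cdot 4$ from converting a $\frac{1}{48}\sigma r^3$ slice through the factor $\frac{1}{4\sigma^{1/2}}$).

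The one place your plan would fail as written is the budget for the $\max$ in \eqref{InexactProgressSOThStep}. You propose to reserve \emph{two independent} spare copies of $\frac{\sigma}{48}r^3$, one earmarked for the gradient term and one for $\xi(x^+)$. But $\frac{\sigma}{12}r^3$ is only four copies of $\frac{\sigma}{48}r^3$: after the absorptions and the copy that must remain inside $\mathcal{E}$ there is exactly one copy left, and splitting into six pieces (as you suggest) shrinks each piece to $\frac{\sigma}{72}r^3$ and perturbs every constant, so you would not land on $\frac{1}{3\cdot 2^6\sigma^{1/2}}$, $\frac{1}{2\cdot 3^6\sigma^2}$ and $171$. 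The missing observation is that a lower bound by a maximum needs no joint budget: it suffices to prove $F(x)-F(x^+)\ge B_1+\mathcal{E}$ and $F(x)-F(x^+)\ge B_2+\mathcal{E}$ as two \emph{separate} inequalities, each time converting the \emph{same} single leftover $\frac{\sigma}{48}r^3$ either into the gradient term via \eqref{NewGradBound} or into $[\xi(x^+)]^3$ via \eqref{XiBound}, the two resulting error contributions both being covered by the common constant $171$ (indeed $\frac{2^9}{3}+\bigl(\frac{2}{3}\bigr)^3<171$ and $\frac{2^9}{3}+\frac{1}{192}<171$). This is exactly what the paper does. A secondary arithmetic slip: you spend three full $\frac{\sigma}{48}r^3$ copies on absorbing the $\delta_B r^2$, $L\|x-z\|r^2$ and $\delta_g r$ terms and still claim $\frac{\sigma}{24}r^3$ of spare progress, which is one copy too many out of four; either treat $\delta_B+L\|x-z\|$ jointly (as the paper does, at the cost of a factor $(a+b)^3\le 4(a^3+b^3)$) or use two half-copies of size $\frac{\sigma}{96}r^3$, which happens to reproduce the same coefficient $\frac{2^9}{3\sigma^2}$.
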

\proof We denote $r := \|x^+ - x\|$.
Firstly, we bound the negative terms from \eqref{NewFuncProgress}, by using Young's inequality:
$ab \leq \frac{a^3}{3} + \frac{2b^{3/2}}{3}$, $a, b \geq 0$. We have
\beq \label{FuncProgThB1}
\ba{rcl}
\frac{1}{2} (\delta_B + L \|x - z\|) r^2 
& = &
\Bigl[ \frac{\sigma^{2/3} r^2}{2^{10/3}}  \Bigr] 
\cdot \Bigl[ 
\frac{2^{10 / 3}}{2 \sigma^{2/3}} \cdot \bigl(  \delta_B + L\|x - z\|  \bigr) \Bigr] \\
\\
& \leq & 
\frac{2}{3} \Bigl[  \frac{\sigma^{2/3} r^2}{2^{10/3}}  \Bigr]^{3/2}
+ \frac{1}{3}
\Bigl[ 
\frac{2^{10 / 3}}{2 \sigma^{2/3}} \cdot \bigl(  \delta_B + L \|x - z\|  \bigr) \Bigr]^3 \\
\\
& = &
\frac{\sigma r^3}{3 \cdot 2^4}
+ \frac{2^7}{3\sigma^2} \bigl(  \delta_B + L \|x - z\| \bigr)^3 
\;\; \leq \;\;
\frac{\sigma r^3}{3 \cdot 2^4}
+ \frac{2^9}{3 \sigma^2} \bigl(  \delta_B^3 + L^3\|x - z\|^3 \bigr),
\ea
\eeq
and
\beq \label{FuncProgThB2}
\ba{rcl}
\delta_g r & = & 
\Bigl[  \frac{\sigma^{1/3} r}{2^{4/3}} \Bigr] \cdot 
\Bigl[ \frac{2^{4/3} \delta_g}{\sigma^{1/3}} \Bigr]
\;\; \leq \;\;
\frac{\sigma r^3}{48} + \frac{2^3 \delta_g^{3/2}}{3\sigma^{1/2}}.
\ea
\eeq
Therefore, for the functional progress, we obtain
\beq \label{FuncProgThStep1}
\ba{rcl}
F(x) - F(x^+) & \overset{ \eqref{NewFuncProgress}, \eqref{FuncProgThB1}, \eqref{FuncProgThB2} }{\geq} & 
\frac{\sigma}{24} r^3
\; - \; \frac{2^9}{3 \sigma^2} \bigl( \delta_B^3 + L^3\|x - z\|^3  \bigr)
\; - \; \frac{2^3 \delta_g^{3/2}}{3\sigma^{1/2}}.
\ea
\eeq
Secondly, we can relate $r$ and the new gradient norm by using \eqref{NewGradBound}. We get
\beq \label{FuncProgThStep2}
\ba{rcl}
\| \nabla f(x^+) + \psi'(x^+) \|^{3/2} & \overset{\eqref{NewGradBound}}{\leq} &
\Bigl(  
\sigma r^2
+ \delta_B r + L \|x - z\| r + \delta_g 
\Bigr)^{3/2} \\
\\
& \overset{(*)}{\leq} & 
2 \sigma^{1/2}
\Bigl(  \sigma r^3 
\; + \; \frac{\delta_B^{3/2} r^{3/2}}{\sigma^{1/2}} 
\; + \; \frac{L^{3/2} \|x - z\|^{3/2} r^{3/2}}{\sigma^{1/2}} 
\; + \;  \frac{ \delta_g^{3/2}}{\sigma^{1/2}} 
\Bigr)\\
\\
& \overset{(**)}{\leq} & 
2 \sigma^{1/2}
\Bigl( 
2 \sigma r^3
+ \frac{\delta_B^{3}}{2 \sigma^2}
+ \frac{L^3 \|x - z\|^3}{2 \sigma^2} + \frac{\delta_g^{3/2}}{\sigma^{1/2}},
\Bigr)
\ea
\eeq
where we used in $(*)$ Jensen's inequality: $(\sum_{i = 1}^4 a_i )^{3/2} \leq 2 \sum_{i = 1}^4 a_i^{3/2}$
for non-negative numbers $\{ a_i \}_{i = 1}^4$, and 
in $(**)$ Young's inequality: $ab \leq \frac{a^2}{2} + \frac{b^2}{2}$, $a, b \geq 0$.
Rearranging the terms, we obtain
\beq \label{FuncProgThStep3}
\ba{rcl}
\sigma r^3 & \overset{\eqref{FuncProgThStep2}}{\geq} &
\frac{1}{4 \sigma^{1/2}} \| \nabla f(x^+) + \psi'(x^+) \|^{3/2}
- \frac{1}{4\sigma^2} \bigl( \delta_B^3 + L^3\|x - z\|^3 \bigr)
- \frac{\delta_g^{3/2}}{2 \sigma^{1/2}}.
\ea
\eeq
Combining \eqref{FuncProgThStep1} and \eqref{FuncProgThStep3} gives \eqref{InexactProgressThStep}.

Finally, assuming twice differentiability of the composite part and using Lemma~\ref{LemmaNewHess}
for the extra condition \eqref{InexatSOThStep} on $x^+$, we get
\beq \label{Xi3Bound}
\ba{rcl}
\bigl[ \xi(x^+) \bigr]^3
& \overset{\eqref{XiBound}}{\leq} &
\Bigl[   \frac{3}{2} \sigma r + L\|x - z\| + \delta_B  \Bigr]^3 \\
\\
& \overset{(*)}{\leq} &
\frac{3^5}{2^3} \sigma^3 r^3
+ 3^2 L^3 \|x - z\|^3 
+ 3^2 \delta_B^3,
\ea
\eeq
where we used in $(*)$ Jensen's inequality: $( \sum_{i = 1}^3 a_i )^3 \leq 3^2 \sum_{i = 1}^3 a_i$
for non-negative numbers $\{ a_i \}_{i = 1}^3$.
Hence, rearranging the terms, we obtain
$$
\ba{rcl}
\sigma r^3
& \overset{\eqref{Xi3Bound}}{\geq} &
\frac{2^3}{3^5 \sigma^2} \bigl[  \xi(x^+) \bigr]^3
- \bigl(  \frac{2}{3} \bigr)^3
\frac{1}{\sigma^2} \bigl( \delta_B^^3 + L^3\|x - z\|^3  \bigr).
\ea
$$
Combining it with \eqref{FuncProgThStep1}
justifies the improved bound \eqref{InexactProgressSOThStep}. 
\qed

\section{Finite Difference Approximations}
\label{SectionFiniteDiff}

In this section, we recall important bounds on finite difference
approximations for the Hessian and for the gradient of our objective.

Let us start with the first-order approximation of the Hessian, that will lead
us to the first-order (Hessian-free) implementation of the Cubic Newton Method.
See, e.g., Lemma~3 in \cite{grapiglia2022cubic}.

\begin{lemma} \label{LemmaHessFO}
	Suppose that A1 holds.
	Given $\bar{x}\in\mathbb{R}^{n}$ and $h>0$, let $A\in\mathbb{R}^{n\times n}$ be defined by
	\begin{equation}
	\label{HessFODefA}
	\ba{rcl}
	A  &= & 
	\left[\dfrac{\nabla f(\bar{x}+he_{1})-\nabla f(\bar{x})}{h},\ldots,\dfrac{\nabla f(\bar{x}+he_{n})-\nabla f(\bar{x})}{h}\right].
	\ea
	\end{equation}
	Then, the matrix
	\begin{equation} \label{HessFODefB}
	\ba{rcl}
	B &= & \frac{1}{2}\left(A+A^{\top}\right)
	\ea
	\end{equation}
	satisfies
	\begin{equation} \label{HessFOBound}
	\ba{rcl}
	\|B-\nabla^{2}f(\bar{x})\| & \leq & \frac{\sqrt{n} L}{2} h.
	\ea
	\end{equation}
\end{lemma}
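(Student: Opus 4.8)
The plan is to estimate the error columnwise and then control the symmetrization. First I would fix an index $i \in \{1, \ldots, n\}$ and look at the $i$-th column of $A$, namely $\frac{\nabla f(\bar x + h e_i) - \nabla f(\bar x)}{h}$. The natural comparison is with $\nabla^2 f(\bar x) e_i$, which is the $i$-th column of $\nabla^2 f(\bar x)$. Writing $\nabla f(\bar x + h e_i) - \nabla f(\bar x) = \int_0^1 \nabla^2 f(\bar x + t h e_i)(h e_i)\, dt$ via the fundamental theorem of calculus, I would subtract $h \nabla^2 f(\bar x) e_i$, divide by $h$, and use \textbf{A1} (the bound \eqref{LipHess}) to get $\|\nabla^2 f(\bar x + t h e_i) - \nabla^2 f(\bar x)\| \le L t h$ inside the integral. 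Integrating $\int_0^1 L t h \, dt = \frac{Lh}{2}$ yields $\|A e_i - \nabla^2 f(\bar x) e_i\| \le \frac{Lh}{2}$ for every $i$.

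Next I would assemble these column bounds. Since the columns of $A - \nabla^2 f(\bar x)$ each have Euclidean norm at most $\frac{Lh}{2}$, the Frobenius norm satisfies $\|A - \nabla^2 f(\bar x)\|_F \le \sqrt{n}\,\frac{Lh}{2}$. Then $\|A - \nabla^2 f(\bar x)\| \le \|A - \nabla^2 f(\bar x)\|_F \le \frac{\sqrt n L}{2} h$, using the standard inequality that the spectral norm is dominated by the Frobenius norm.

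Finally I would pass to the symmetrized matrix $B = \frac12(A + A^\top)$. Since $\nabla^2 f(\bar x)$ is itself symmetric, $\nabla^2 f(\bar x) = \frac12(\nabla^2 f(\bar x) + \nabla^2 f(\bar x)^\top)$, so $B - \nabla^2 f(\bar x) = \frac12\big((A - \nabla^2 f(\bar x)) + (A - \nabla^2 f(\bar x))^\top\big)$. Taking spectral norms, using the triangle inequality and the fact that $\|C^\top\| = \|C\|$, gives $\|B - \nabla^2 f(\bar x)\| \le \|A - \nabla^2 f(\bar x)\| \le \frac{\sqrt n L}{2} h$, which is \eqref{HessFOBound}.

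There is no serious obstacle here; the only point that needs a little care is the choice of matrix norm in the middle step — one wants the columnwise Euclidean bound to translate into a bound on the operator norm, and routing through the Frobenius norm (at the cost of the $\sqrt n$ factor, which is exactly the factor appearing in the statement) is the clean way to do it. The symmetrization step is then immediate because symmetrizing cannot increase the distance to an already-symmetric target.
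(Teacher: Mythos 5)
Your proof is correct. The paper does not include its own argument for this lemma (it defers to Lemma~3 of \cite{grapiglia2022cubic}), and your derivation --- columnwise comparison via the integral representation $\nabla f(\bar x + h e_i)-\nabla f(\bar x)=\int_0^1 \nabla^2 f(\bar x + t h e_i)(h e_i)\,dt$, aggregation through the Frobenius norm to obtain the $\sqrt n$ factor, and the observation that symmetrizing cannot increase the distance to the symmetric target --- is precisely the standard argument used in that reference, with all steps valid.
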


Now, let us consider zeroth-order approximations of the derivatives,
that requires computing only the objective function value (see, e.g.,  Section~7.1 in \cite{nocedal2006numerical}).
We establish explicit bounds necessary for the analysis of our methods
and provide their proofs to ensure completeness of our presentation.
The following lemma gives a zeroth-order approximation guarantee for the gradient.

\begin{lemma} \label{LemmaGradZO}
	Suppose that A1 holds. Given $\bar{x}\in\mathbb{R}^{n}$ and $h>0$, let $g\in\mathbb{R}^{n}$ be defined by
	\begin{equation} \label{GradZODef}
	\ba{rcl}
	g_{i} & = & \dfrac{f(\bar{x}+he_{i})-f(\bar{x}-he_{i})}{2h},\quad i=1,\ldots,n.
	\ea
	\end{equation}
	Then,
	\beq \label{GradZOBound}
	\ba{rcl}
	\| g - \nabla f(\bar{x}) \| & \leq & \frac{\sqrt{n} L}{6} h^2.
	\ea
	\eeq
\end{lemma}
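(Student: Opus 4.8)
The plan is to estimate the coordinate-wise error between the central finite difference and the true gradient, and then combine the coordinate errors into a Euclidean norm bound. First I would fix an index $i \in \{1, \dots, n\}$ and apply the second-order Taylor expansion of $f$ along the segment $[\bar x - h e_i, \bar x + h e_i]$ with integral (or Lagrange) remainder, using that under A1 the Hessian of $f$ is $L$-Lipschitz. Specifically, I would write
\[
f(\bar x + h e_i) = f(\bar x) + h \langle \nabla f(\bar x), e_i \rangle + \frac{h^2}{2}\langle \nabla^2 f(\bar x) e_i, e_i \rangle + R_i^+,
\]
\[
f(\bar x - h e_i) = f(\bar x) - h \langle \nabla f(\bar x), e_i \rangle + \frac{h^2}{2}\langle \nabla^2 f(\bar x) e_i, e_i \rangle + R_i^-,
\]
so that subtracting cancels the function value and the quadratic term, giving
\[
g_i - \langle \nabla f(\bar x), e_i \rangle = \frac{R_i^+ - R_i^-}{2h}.
\]

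The key step is to bound the remainder terms. Writing $R_i^{\pm} = \int_0^1 (1-t)\, h^2 \langle (\nabla^2 f(\bar x \pm t h e_i) - \nabla^2 f(\bar x)) e_i, e_i \rangle \, dt$ and using \eqref{LipHess} with $\|{\pm} t h e_i\| = t h$, we get $|R_i^{\pm}| \le \int_0^1 (1-t)\, h^2 \cdot L t h \, dt = \frac{L h^3}{6}$. Hence $|g_i - \langle \nabla f(\bar x), e_i \rangle| \le \frac{|R_i^+| + |R_i^-|}{2h} \le \frac{L h^2}{6}$ for each $i$. Summing the squares over $i = 1, \dots, n$ yields
\[
\|g - \nabla f(\bar x)\|^2 = \sum_{i=1}^n \bigl( g_i - \langle \nabla f(\bar x), e_i \rangle \bigr)^2 \le n \cdot \frac{L^2 h^4}{36},
\]
and taking square roots gives exactly \eqref{GradZOBound}.

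I do not expect any serious obstacle here; the only point requiring care is getting the constant $\tfrac{1}{6}$ right, which hinges on using the \emph{central} difference (so the first-order term and, crucially, the quadratic term cancel, leaving only the third-order Lipschitz-controlled remainder) together with the $\int_0^1 (1-t) t \, dt = \tfrac{1}{6}$ computation. An alternative to the integral remainder is to apply the standard estimate $|f(y) - f(x) - \langle \nabla f(x), y - x\rangle - \tfrac12 \langle \nabla^2 f(x)(y-x), y-x\rangle| \le \tfrac{L}{6}\|y-x\|^3$ (a direct consequence of A1) at $y = \bar x \pm h e_i$, which packages the same bound and avoids writing the integral explicitly. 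Either route leads to the stated inequality.
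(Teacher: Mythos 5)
Your proposal is correct and follows essentially the same route as the paper: the paper applies the standard third-order estimate $|f(y)-f(\bar x)-\langle\nabla f(\bar x),y-\bar x\rangle-\tfrac12\langle\nabla^2 f(\bar x)(y-\bar x),y-\bar x\rangle|\le\tfrac{L}{6}\|y-\bar x\|^3$ at $y=\bar x\pm he_i$ (the alternative you mention at the end), sums the two bounds to get the coordinate-wise error $\tfrac{Lh^2}{6}$, and concludes via $\|\cdot\|\le\sqrt{n}\|\cdot\|_\infty$, which matches your summation of squares.
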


\begin{proof}
	By A1 we have
	\begin{equation}
	\ba{rcl}
	\left|f(\bar{x}+he_{i})-f(\bar{x})-h\langle\nabla f(\bar{x}),e_{i}\rangle-\dfrac{h^{2}}{2}\langle\nabla^{2}f(\bar{x})e_{i},e_{i}\rangle\right|
	& \leq & \frac{Lh^{3}}{6}
	\ea
	\label{eq:2.22}
	\end{equation}
	and
	\begin{equation}
	\ba{rcl}
	\left|f(\bar{x})-h\langle\nabla f(\bar{x}),e_{i}\rangle+\dfrac{h^{2}}{2}\langle\nabla^{2}(\bar{x})e_{i},e_{i}\rangle-f(\bar{x}-he_{i})\right|
	& \leq &
	\frac{Lh^{3}}{6}.
	\ea
	\label{eq:2.23}
	\end{equation}
	Summing (\ref{eq:2.22}) and (\ref{eq:2.23}) and using the triangle inequality, we get
	\beq \label{GFOinterm}
	\ba{rcl}
	\left|f(\bar{x}+he_{i})-f(\bar{x}-he_{i})-2h\left[\nabla f(\bar{x})\right]_{i}\right|
	& \leq & \frac{Lh^{3}}{3}
	\ea
	\eeq
	Therefore,
	$$
	\ba{rcl}
	|g_i - [\nabla f(\bar{x})]_i |& = &
	\left| 
	\frac{f(\bar{x} + he_i) - f(\bar{x} - he_i)}{2h}
	- [\nabla f(\bar{x})]_i
	\right|
	\;\; \overset{\eqref{GFOinterm}}{\leq} \;\;
	\frac{L h^2}{6}.
	\ea
	$$
	Thus, we conclude
	$$
	\ba{rcl}
	\|g - \nabla f(\bar{x})\|
	& \leq &
	\sqrt{n}\|g-\nabla f(\bar{x})\|_{\infty}
	\;\; \leq \;\; \frac{\sqrt{n}L}{6}h^{2}.
	\ea
	$$
\end{proof}

Finally, we provide a zeroth-order approximation guarantee for the Hessian.

\begin{lemma} \label{LemmaHessZO}
	Suppose that A1 holds. Given $\bar{x}\in\mathbb{R}^{n}$ and $h>0$, let $A\in\mathbb{R}^{n\times n}$ be defined by
	\begin{equation}  \label{HessZO}
	\ba{rcl}
	A_{ij}
	& = & \dfrac{f(\bar{x}+he_{i}+he_{j})-f(\bar{x}+he_{i})-f(\bar{x}+he_{j})-f(\bar{x})}{h^{2}},\quad i,j=1,\ldots,n.
	\ea
	\end{equation}
	Then, the matrix
	\begin{equation} \label{HessZOBDef}
	\ba{rcl}
	B & = & \frac{1}{2}\left(A+A^{\top}\right)
	\ea
	\end{equation}
	satisfies
	\begin{equation} \label{HessZOBBound}
	\ba{rcl}
	\|B-\nabla^{2}f(\bar{x})\|
	& \leq &  \frac{2 nL}{3}h.
	\ea
	\end{equation}
\end{lemma}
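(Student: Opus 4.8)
The plan is to bound the error entrywise in the max-norm and then convert to the spectral norm via the standard inequality $\|B - \nabla^2 f(\bar x)\| \leq \sqrt{ \|B - \nabla^2 f(\bar x)\|_1 \cdot \|B - \nabla^2 f(\bar x)\|_\infty } \leq n \max_{i,j} |B_{ij} - [\nabla^2 f(\bar x)]_{ij}|$, using symmetry of both matrices. Since $B = \tfrac12(A + A^\top)$ and $\nabla^2 f(\bar x)$ is symmetric, we have $|B_{ij} - [\nabla^2 f(\bar x)]_{ij}| \leq \tfrac12 |A_{ij} - [\nabla^2 f(\bar x)]_{ij}| + \tfrac12 |A_{ji} - [\nabla^2 f(\bar x)]_{ji}|$, so it suffices to estimate $|A_{ij} - [\nabla^2 f(\bar x)]_{ij}|$ for each pair $i,j$.

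The core estimate is a second-order Taylor expansion with Lipschitz-Hessian remainder. For any $u \in \R^n$, assumption A1 gives $|f(\bar x + u) - f(\bar x) - \langle \nabla f(\bar x), u\rangle - \tfrac12 \langle \nabla^2 f(\bar x) u, u\rangle| \leq \tfrac{L}{6}\|u\|^3$. I would apply this four times, with $u$ equal to $h e_i + h e_j$, $h e_i$, $h e_j$, and $0$, matching the four terms in the definition \eqref{HessZO} of $A_{ij}$. Taking the appropriate signed combination, the function-value, gradient, and the pure quadratic terms $\tfrac{h^2}{2}\langle \nabla^2 f(\bar x) e_i, e_i\rangle$ and $\tfrac{h^2}{2}\langle \nabla^2 f(\bar x) e_j, e_j\rangle$ all cancel, leaving exactly $h^2 \langle \nabla^2 f(\bar x) e_i, e_j\rangle = h^2 [\nabla^2 f(\bar x)]_{ij}$ on the exact side and four remainder terms on the error side. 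The norms involved are $\|h e_i + h e_j\| = \sqrt2\, h$ (for $i \neq j$; for $i = j$ the combination degenerates and is even easier), $\|h e_i\| = \|h e_j\| = h$, and $0$, so the total remainder is at most $\tfrac{L h^3}{6}(2\sqrt2 + 1 + 1 + 0) = \tfrac{L h^3}{6}(2 + 2\sqrt2)$. Dividing by $h^2$ yields $|A_{ij} - [\nabla^2 f(\bar x)]_{ij}| \leq \tfrac{(2 + 2\sqrt2)L}{6} h = \tfrac{(1+\sqrt2)L}{3} h$.

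Combining, $|B_{ij} - [\nabla^2 f(\bar x)]_{ij}| \leq \tfrac{(1+\sqrt2)L}{3} h$, and then $\|B - \nabla^2 f(\bar x)\| \leq n \cdot \tfrac{(1+\sqrt2)L}{3} h$. Since $1 + \sqrt 2 < 2 \cdot \tfrac{4}{3}$... more carefully, $1 + \sqrt2 \approx 2.414 < 2.5$, and to reach the stated constant $\tfrac{2nL}{3}h$ I would note $(1+\sqrt2)/3 \leq 2/3$ fails numerically, so the clean route to the advertised bound is to use a slightly more careful bookkeeping: since $B_{ij}$ symmetrizes, one can instead bound $|B_{ij} - [\nabla^2 f]_{ij}|$ directly by exploiting that the $i \neq j$ off-diagonal combination for $A_{ij} + A_{ji}$ allows cancellation that brings the effective constant down, or simply use $\|B - \nabla^2 f(\bar x)\| \le \|B - \nabla^2 f(\bar x)\|_F \le n \max_{ij}|\cdots|$ is too weak — the operative point is that the max-norm-to-spectral passage plus the $(2+2\sqrt2)/6$ Taylor constant must be organized to land at $2/3$. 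The main obstacle is therefore purely this constant-tracking: getting the remainder accounting and the norm conversion tight enough (e.g., noticing $\|h e_i + h e_j\|^3 = 2\sqrt2\, h^3$ and that only this term, not all four, carries weight after symmetrization) to hit $\tfrac{2nL}{3}h$ rather than a slightly larger multiple. No deep idea is needed beyond Taylor's theorem with the A1 remainder and the elementary spectral-vs-entrywise norm bound.
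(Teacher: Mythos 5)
Your approach is exactly the paper's: Taylor-expand the function values at $\bar{x}+he_i+he_j$, $\bar{x}+he_i$, $\bar{x}+he_j$ with the A1 remainder $\tfrac{L}{6}\|u\|^3$, cancel everything except $h^2[\nabla^2 f(\bar{x})]_{ij}$, bound the error entrywise, and pass to the spectral norm via $\|\cdot\|\le n\|\cdot\|_{\max}$ (the paper uses exactly this chain; your worry that the Frobenius route is ``too weak'' is unfounded, since it gives the same factor $n$). Moreover, your remainder accounting for the mixed term is the correct one: $\|he_i+he_j\|^3=2\sqrt{2}\,h^3$, so that term contributes $\tfrac{\sqrt{2}}{3}Lh^3$ and the off-diagonal entrywise error is $\tfrac{(1+\sqrt{2})L}{3}h$. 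The paper reaches $\tfrac{2L}{3}h$ only because its inequality \eqref{HessZOEqBound1} asserts the remainder $\tfrac{Lh^3}{3}$ for this expansion, which amounts to taking $\|he_i+he_j\|^3=2h^3$. So the constant you are ``failing'' to reach reflects an arithmetic slip in the paper's own proof, not a missing idea on your side.

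That said, your proposal as written does not close, and the escape routes you sketch do not work. Symmetrization buys nothing: the expression \eqref{HessZO} is symmetric in $i$ and $j$, so $A=A^{\top}$ and $B=A$; there is no cancellation between $A_{ij}$ and $A_{ji}$ to exploit. Your claim that the diagonal case is ``even easier'' is also backwards: for $i=j$ one has $u=2he_i$ with $\|u\|^3=8h^3$, a remainder of $\tfrac{4Lh^3}{3}$, and an entrywise error of $\tfrac{5L}{3}h$ --- the worst entry of all. The honest conclusion of this argument is a bound of the form $\|B-\nabla^2 f(\bar{x})\|\le \tfrac{5nL}{3}h$ (or $\tfrac{(1+\sqrt{2})nL}{3}h$ for the off-diagonal part), and even the sharper integral representation of the second difference yields roughly $0.77\,Lh$ per off-diagonal entry, still above $\tfrac{2L}{3}h$. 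You should therefore state the lemma with the corrected absolute constant rather than chase $\tfrac{2}{3}$; only the numerical constants propagated into \eqref{SigmaHBZOCondition} and Algorithm~\ref{alg:ZeroOrderCNM} change, while the $\mathcal{O}(nLh)$ scaling --- which is all that the complexity results use --- is unaffected.
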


\begin{proof}
	By A1 we have the following inequalities:
	\begin{equation} \label{HessZOEqBound1}
	\ba{cl}
	& \Bigl|f(\bar{x} + h e_{i} + h e_{j}) - f(\bar{x}) - 
	h\langle\nabla f(\bar{x}),e_{i}\rangle
	- h\langle\nabla f(\bar{x}),e_{j}\rangle \\
	\\
	& \;
	- \frac{h^{2}}{2}\langle\nabla^{2}f(\bar{x})e_{i},e_{i}\rangle-h^{2}\langle\nabla^{2}f(\bar{x})e_{i},e_{j}\rangle
	- \frac{h^{2}}{2}\langle\nabla^{2}f(\bar{x})e_{j},e_{j}\rangle
	\Bigr|
	\;\; \leq \;\; \frac{Lh^{3}}{3},
	\ea
	\end{equation}
	\begin{equation} 	\label{HessZOEqBound2}
	\ba{c}
	\Bigl|f(\bar{x}) + h\langle\nabla f(\bar{x}),e_{i}\rangle + \frac{h^{2}}{2}\langle\nabla^{2}f(\bar{x})e_{i},e_{i}\rangle
	- f(\bar{x}+he_{i}) \Bigr|
	\;\; \leq \;\; \frac{Lh^{3}}{6},
	\ea
	\end{equation}
	and
	\begin{equation} \label{HessZOEqBound3}
	\ba{c}
	\Bigl|f(\bar{x}) + h\langle\nabla f(\bar{x}),e_{j}\rangle 
	+ \frac{h^{2}}{2}\langle\nabla^{2}f(\bar{x})e_{j},e_{j}\rangle
	- f(\bar{x}+he_{j}) \Bigr|
	\;\; \leq \;\; \frac{Lh^{3}}{6}
	\ea
	\end{equation}
	Summing \eqref{HessZOEqBound1}-\eqref{HessZOEqBound2}, and using the triangle inequality, we get
	\begin{equation*}
	\ba{c}
	\Bigl|f(\bar{x}+he_{i}+he_{j})-f(\bar{x}+he_{i})-f(\bar{x}+he_{j})
	 + f(\bar{x})-h^{2} \langle\nabla^{2}f(\bar{x})e_{i},e_{j}\rangle\Bigr|\;\; \leq \;\;\frac{2Lh^{3}}{3}
	 \ea
	\end{equation*}
	Hence,
	\begin{equation*}
	\ba{c} h^{2}\Bigl|\frac{f(\bar{x}+he_{i}+he_{j})-f(\bar{x}+he_{i})-f(\bar{x}+he_{j})+f(\bar{x})}{h^{2}}-\left[\nabla^{2}f(\bar{x})\right]_{ij}\Bigr| \;\; \leq \;\; \frac{2Lh^{3}}{3}
	\ea
	\end{equation*}
	and, consequently,
	\begin{equation*}
	\ba{rcl}\left|A_{ij}-\left[\nabla^{2}f(\bar{x})\right]_{ij}\right|
	& \leq &
	\frac{2L}{3}h.
	\ea
	\end{equation*}
	Thus, we finally obtain
	\begin{equation*}
	\ba{rcl}
	\|B-\nabla^{2}f(\bar{x})\|\leq \|A-\nabla^{2}f(\bar{x})\|
	& \leq & n\|A-\nabla^{2}f(\bar{x})\|_{\max}\leq\frac{2nL}{3}h,
	\ea
	\end{equation*}
	which is the required bound.
\end{proof}

\section{Hessian-Free CNM with Lazy Hessians}
\label{SectionHF}

Let us present our first algorithm, which is the \textit{Hessian-free} implementation
of the Cubic Newton Method (CNM) \cite{nesterov2006cubic}. 
In each iteration of our algorithm, we use an adaptive search to fit simultaneously
the regularization constant $\sigma$
and the parameter $h$ of finite difference approximation of the Hessian
(see Lemma~\ref{LemmaHessFO}). Therefore, our algorithm does not need to fix 
these parameters in advance, adjusting them automatically.

After the new approximation $B_{k, \ell} \approx \nabla^2 f(x_k)$ of the Hessian is computed, 
where $k \geq 0$ is the current iteration and $\ell$ is the adaptive search index,
we keep using the same matrix $B_{k, \ell}$ for the next $m$ Cubic Newton steps~\eqref{CompositeSubproblem},
where $m \geq 1$ is our global key parameter.

If we set $m := 1$, it means we update the Hessian approximation each Cubic Newton step,
which can be costly from the computational point of view. Instead, we can
use $m > 1$ (lazy Hessian updates~\cite{doikov2023second}),
that reuses the same Hessian approximation for several steps
and thus reduces the arithmetic complexity.

Let us denote by $(\hat{x}, \alpha) = \text{\ttfamily CubicSteps}(x, B, \sigma, m, \epsilon)$
an auxiliary procedure that performs $m$ inexact Cubic Newton steps
\eqref{CompositeSubproblem},
starting from point $x \in Q$ and
using the same given matrix $B = B^{\top}$ and regularization constant $\sigma > 0$
for all steps, while recomputing the gradients. Parameter $\epsilon > 0$
is used for validating a certain stopping condition. 
We can write this procedure in the algorithmic form, as follows.

\begin{algorithm}[h!]
	\caption{$\text{\ttfamily CubicSteps}(x, B, \sigma, m, \epsilon)$} \label{alg:HessianFree}
	\SetKwInOut{Input}{input}\SetKwInOut{Output}{output}
	\BlankLine
	\textbf{Step 0.} Set $x_0 := x$ and $t := 0$.
	\BlankLine
	{\bf Step 1.} 	If $t = m$ then stop and \textbf{return} $(x_t, \, \text{\ttfamily success})$.
	\BlankLine
	{\bf Step 2.}  Compute $x_{t + 1}$ as an approximate solution to the subproblem
	$$
	\ba{c}
	\min\limits_{y \in Q} \Bigl\{ \, M_{x_{t}, \sigma}(y) + \psi(y) \, \Bigr\},
	\qquad \text{where}
	\ea
	$$
	$$
	\ba{rcl}
	M_{x_{t}, \sigma}(y) 
	& \equiv &
	f(x_t) + \la \nabla f(x_t), y - x_t \ra
	+ \frac{1}{2}\la B(y - x_t), y - x_t \ra
	+ \frac{\sigma}{6}\|y - x_t\|^3
	\ea
	$$
	such that
	\beq \label{FOMCond}
	\ba{rcl}
	M_{x_{t},\sigma}(x_{t + 1}) + \psi(x_{t + 1}) &\leq & F(x_{t}) \quad \text{and} \\
	\\ 
	\|\nabla M_{x_{t}, \sigma} ( x_{t + 1} )   +  \psi'(x_{t + 1}) \| & \leq &
	\frac{\sigma}{4} \|x_{t + 1} - x_t  \|^{2} \quad \text{for some} \;\; \psi'(x_{t + 1}) \in \partial \psi(x_{t + 1}),
	\ea
	\eeq
	and (\textbf{optionally}, if $\psi$ is twice differentiable) such that
	\beq \label{FOMCond2}
	\ba{rcl}
	B + \sigma \|x_{t + 1} - x_t \| I + \nabla^2 \psi(x_{t + 1}) & \succeq & 0.
	\ea
	\eeq
	\BlankLine
	{\bf Step 3.}
	If $\| \nabla f(x_{t + 1}) + \psi'(x_{t + 1}) \| \leq \epsilon$ then stop and \textbf{return} $(x_{t + 1}, \, \text{\ttfamily solution})$.
	\BlankLine
	
	{\bf Step 4.} If $F(x_0) - F(x_{t + 1}) \geq \frac{\epsilon^{3/2}}{384 \sigma^{1/2}} (t + 1)$
	holds then set $t := t + 1$ and go to Step 1. Otherwise, stop and \textbf{return} $(x_{t + 1}, \text{\ttfamily halt})$.
	\BlankLine
\end{algorithm}

This procedure returns 
the resulting point $\hat{x} \in Q$ and a status variable 
$$
\ba{rcl}
\alpha &\in & 
\{\text{\ttfamily success}, \, \text{\ttfamily solution}, \, \text{\ttfamily halt} \}
\ea
$$
that corresponds respectfully to 
finishing all the steps \textit{successfully}, 
finding a point with \textit{small gradient norm}, 
and \textit{halting} the procedure due to insufficient progress in terms of the objective function.
In the last case, we will need to update our estimates $\sigma$ and $B$
adaptively
and restart this procedure with new parameters.

The next lemma shows that for sufficiently big value of $\sigma$ and 
 small enough $h$ (the parameter of finite difference approximation of the Hessian),
the result of Algorithm~\ref{alg:HessianFree} always belongs to $\{ \text{\ttfamily success}, \, \text{\ttfamily solution}   \}$,
that it either makes a significant progress in the function value, or solves the initial problem~\eqref{MainProblem}.

\begin{lemma} \label{LemmaHF}
	Suppose that A1 holds. Given $x \in Q$, 
	$\epsilon > 0$, $\sigma > 0$, and $m \in \mathbb{N} \setminus \{0 \}$,
	let $(\hat{x}, \alpha)$ be the corresponding output of Algorithm~\ref{alg:HessianFree}
	with $B = \frac{1}{2}(A + A^{\top})$, where
	\beq \label{HFLemmaA}
	\ba{rcl}
	A & = & 
	\Bigl[ 
	\frac{\nabla f(x + he_1) - \nabla f(x)}{h}, \ldots,
	\frac{\nabla f(x + h e_n) - \nabla f(x)}{h}
	\Bigr]
	\ea
	\eeq
	for some $h > 0$. If
	\beq \label{HFSimgaH}
	\ba{rcl}
	\sigma & \geq & 2^4	\bigl(  \frac{2}{3} \bigr)^{\frac{1}{3}} mL
	\qquad \text{and} \qquad
	h \;\; \leq \;\;
	\Bigl[ 
	\frac{3 \sigma^{3/2} \epsilon^{3/2}}{ 2^7 \cdot (192) n^{3/2} L^3 }
	\Bigr]^{\frac{1}{3}},
	\ea
	\eeq
	then either $\alpha = \text{\normalfont \ttfamily solution}$ (and thus $\| \nabla f(\hat{x}) + \psi'(\hat{x}) \| \leq \epsilon$), or
	 $\alpha = \text{\normalfont \ttfamily success}$ and so we have
	\beq \label{FOFuncProgress}
	\ba{rcl}
	F(x) - F(\hat{x}) & \geq & \frac{\epsilon^{3/2}}{2 \cdot (192) \sigma^{1/2}} m.
	\ea
	\eeq
\end{lemma}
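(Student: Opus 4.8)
The plan is to trace through the execution of Algorithm~\ref{alg:HessianFree} and show that, under the stated conditions on $\sigma$ and $h$, the procedure cannot terminate with status \texttt{halt}. First I would invoke Lemma~\ref{LemmaHessFO} to conclude that the finite-difference matrix $B=\frac{1}{2}(A+A^{\top})$ in \eqref{HFLemmaA} satisfies $\|B-\nabla^2 f(x)\|\leq \delta_B$ with $\delta_B := \frac{\sqrt{n}L}{2}h$. Since $B$ approximates the Hessian at $x$ itself, the anchor point is $z=x$ and so $\|x_t - z\|$ equals the cumulative displacement $\|x_t - x_0\|$, which is at most $t$ step lengths; in the worst case, over $m$ lazy steps, this quantity is bounded by $(m-1)$ times the largest single step radius. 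This is exactly the mechanism from \cite{doikov2023second}, and I expect the bookkeeping of $\|x_t - z\|$ to be the main technical obstacle: one must carefully aggregate the per-step progress bounds while the $L\|x-z\|$ error term grows with $t$.

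Next I would apply Theorem~\ref{ThStep} at each inner iteration $t$. The conditions \eqref{FOMCond} match \eqref{InexactThStep} verbatim (with $g=\nabla f(x_t)$, so $\delta_g = 0$), and the optional condition \eqref{FOMCond2} matches \eqref{InexatSOThStep}; moreover $\sigma\geq 2^4(2/3)^{1/3}mL \geq 2L$, so the hypothesis $\sigma\geq 2L$ holds. Hence for each $t$ we get
$$
F(x_t) - F(x_{t+1}) \;\geq\; \frac{1}{3\cdot 2^6\sigma^{1/2}}\|\nabla f(x_{t+1})+\psi'(x_{t+1})\|^{3/2} + \mathcal{E}_t,
$$
with $\mathcal{E}_t = \frac{\sigma}{48}r_t^3 - \frac{171}{\sigma^2}[\delta_B^3 + L^3\|x_t - z\|^3]$ and $r_t := \|x_{t+1}-x_t\|$. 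Summing over $t=0,\dots,j-1$ telescopes the left side to $F(x_0)-F(x_j)$. On the right, if the procedure has not already returned \texttt{solution}, then every gradient norm term exceeds $\epsilon^{3/2}$, contributing at least $\frac{j\epsilon^{3/2}}{3\cdot 2^6\sigma^{1/2}}$. The remaining task is to show that $\sum_{t=0}^{j-1}\mathcal{E}_t$ is not too negative — specifically, that the positive cubic terms $\frac{\sigma}{48}\sum r_t^3$ dominate the error terms $\sum \frac{171}{\sigma^2}L^3\|x_t-z\|^3$.

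For this domination I would use the key lazy-Hessian estimate: $\|x_t - z\|^3 \leq (\sum_{s=0}^{t-1} r_s)^3 \leq t^2 \sum_{s=0}^{t-1} r_s^3$ by the power-mean (Jensen) inequality, so $\sum_{t=0}^{j-1}\|x_t-z\|^3 \leq m^2 \sum_{t=0}^{j-1} r_t^3$ since $t < m$. Plugging in $L^3 \leq \frac{\sigma^3}{2^{12}\cdot(2/3)m^3}$ from the lower bound on $\sigma$ in \eqref{HFSimgaH} makes $\frac{171}{\sigma^2}\cdot m^2 L^3 \sum r_t^3$ small relative to $\frac{\sigma}{48}\sum r_t^3$, so the $r_t^3$ terms collectively stay nonnegative (indeed one can afford to keep a positive fraction, though merely nonnegative suffices). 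The $\delta_B^3$ term is handled separately: $\frac{171}{\sigma^2}\sum_{t<m}\delta_B^3 \leq \frac{171 m}{\sigma^2}\left(\frac{\sqrt{n}L}{2}\right)^3 h^3$, and the upper bound on $h$ in \eqref{HFSimgaH} is precisely calibrated so that this is at most $\frac{j\epsilon^{3/2}}{2\cdot 384\sigma^{1/2}}$ for all $j\leq m$ — I would verify the constants line up (the $2^7\cdot 192$ and $n^{3/2}$ factors in the $h$-bound come from expanding $\delta_B^3 \propto n^{3/2}L^3h^3$ and matching against $\epsilon^{3/2}/\sigma^{1/2}$ after clearing the $\sigma^2$ and $\sigma^{1/2}$ denominators).

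Combining these estimates gives $F(x_0) - F(x_j) \geq \frac{j\epsilon^{3/2}}{3\cdot 2^6\sigma^{1/2}} - \frac{j\epsilon^{3/2}}{2\cdot 384\sigma^{1/2}} = \frac{j\epsilon^{3/2}}{384\sigma^{1/2}}$, using $3\cdot 2^6 = 192$ and $\frac{1}{192}-\frac{1}{768} = \frac{3}{768} = \frac{1}{256}$ — here I would double-check the arithmetic to land exactly on the threshold $\frac{\epsilon^{3/2}}{384\sigma^{1/2}}(t+1)$ from Step~4 (it may be that the bound one gets is $\frac{j\epsilon^{3/2}}{256\sigma^{1/2}}$, which still exceeds the Step~4 requirement, so the \texttt{halt} branch is never triggered). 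Consequently the only possible exits are \texttt{solution} or, after completing all $m$ steps, \texttt{success}; in the latter case Step~4 with $t+1=m$ gives $F(x)-F(\hat x) \geq \frac{\epsilon^{3/2}}{384\sigma^{1/2}}m = \frac{\epsilon^{3/2}}{2\cdot 192\,\sigma^{1/2}}m$, which is exactly \eqref{FOFuncProgress}. The main obstacle, as noted, is the careful aggregation of the $\|x_t-z\|$ growth against the $\sum r_t^3$ budget; everything else is constant-chasing that the hypotheses \eqref{HFSimgaH} are designed to absorb.
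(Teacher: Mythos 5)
Your overall route is the same as the paper's: bound $\|B-\nabla^2 f(x)\|$ by $\delta_B=\tfrac{\sqrt{n}L}{2}h$ via Lemma~\ref{LemmaHessFO}, apply Theorem~\ref{ThStep} per inner step with $\delta_g=0$ and $z=x_0$, telescope, use the big-gradient assumption to harvest $\epsilon^{3/2}/(192\sigma^{1/2})$ per step, absorb the $\delta_B^3$ penalty with the $h$-bound, and dominate the lazy-drift terms $\tfrac{2^9L^3}{3\sigma^2}\|x_t-x_0\|^3$ by the budget $\tfrac{\sigma}{48}\sum r_t^3$. The paper phrases the endgame as a contradiction on the last index $t^\star$ checked in Step~1 and outsources the drift aggregation to Lemma~B.1 of \cite{doikov2023second}; you argue directly and do the aggregation by hand. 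These are equivalent in substance.

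However, your hand-rolled aggregation contains a false inequality at exactly the step you flag as the main obstacle. You claim $\sum_{t=0}^{j-1}\|x_t-z\|^3\leq m^2\sum_{t=0}^{j-1}r_t^3$. From Jensen you correctly get $\bigl(\sum_{s=0}^{t-1}r_s\bigr)^3\leq t^2\sum_{s=0}^{t-1}r_s^3$, but summing this over $t$ produces a double sum: each $r_s^3$ is counted with weight $\sum_{t>s}t^2\leq\sum_{i=1}^{m-1}i^2\leq\tfrac{m^3}{3}$, so the correct constant is of order $m^3/3$, not $m^2$ (with all $r_t=1$ and $j=m$ your claimed bound fails already for $m\geq 4$). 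This matters because the domination requirement $\tfrac{2^9L^3}{3\sigma^2}\cdot\tfrac{m^3}{3}\leq\tfrac{\sigma}{48}$ is \emph{exactly} equivalent to the hypothesis $\sigma\geq 2^4(2/3)^{1/3}mL$ — there is no slack, contrary to your remark that ``one can afford to keep a positive fraction.'' The proof closes only with the sharp constant $\sum_{i=1}^{m-1}i^2\leq m^3/3$; with the cruder bound $m\cdot m^2=m^3$ the domination would fail by a factor of $3$. A similar self-cancelling slip appears at the end: the $h$-bound is calibrated so the per-step $\delta_B$ penalty is $\epsilon^{3/2}/(384\sigma^{1/2})$ (not $\epsilon^{3/2}/(768\sigma^{1/2})$ as you write), so the net per-step decrease is exactly the Step~4 threshold $\epsilon^{3/2}/(384\sigma^{1/2})$, met with equality rather than your $\epsilon^{3/2}/(256\sigma^{1/2})$. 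Once the aggregation constant is corrected to $m^3/3$, your argument coincides with the paper's.
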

\begin{proof}
Suppose that 
\beq \label{BigGrad}
\ba{rcl}
\| \nabla f(\hat{x}) + \psi'(\hat{x})  \| & > & \epsilon.
\ea
\eeq 
Hence, $\alpha \not= \text{\ttfamily solution}$.

Let us denote by $t^{\star}$ the last value of $t$ checked in Step 1.
Clearly, $t^{\star} \leq m$ and we need just to prove that $t^{\star} = m$.
Suppose that $t^{\star} < m$, and hence inequality in Step 4
of the algorithm does not hold for $t := t^{\star}$.

It follows from \eqref{HFLemmaA} and
Lemma \ref{LemmaHessFO} that
\beq \label{FOBBound}
\ba{rcl}
\| B - \nabla^2 f(x) \| & \leq & \delta_B
\ea
\eeq
for 
$$
\ba{rcl}
\delta_B & = & \frac{\sqrt{n} L}{2} h.
\ea
$$
Then, by the second inequality in \eqref{HFSimgaH} we get
\beq \label{DeltaBBound}
\ba{rcl}
\frac{2^9}{3 \sigma^2} \delta_B^3
& = &
\frac{2^9}{3 \sigma^2} \cdot \frac{n^{3/2} L^3}{2^3} \cdot h^3
\;\; \leq \;\;
\frac{\epsilon^{3/2}}{2 \cdot (192) \sigma^{1/2}}.
\ea
\eeq
Hence, 
in view of \eqref{FOMCond} and \eqref{FOBBound}, Theorem~\ref{ThStep} with $\delta_g := 0$ and $z := x = x_0$ gives
\beq \label{FuncProgLemma1}
\ba{rcl}
F(x_{t}) - F(x_{t + 1}) & \geq & 
\frac{\sigma}{48} \|x_{t + 1} - x_{t}\|^3
+ \frac{1}{192 \sigma^{1/2}} \| \nabla f(x_{t + 1}) + \psi'(x_{t + 1}) \|^{3/2}  \\
\\
& & 
\quad 
- \; \frac{2^9}{3 \sigma^2} \delta_B^3
- \frac{2^9 L^3}{3 \sigma^2} \|x_{t} - x_0\|^3 \\
\\
& \overset{\eqref{DeltaBBound}}{\geq} &
\frac{\sigma}{48} \|x_{t + 1} - x_{t}\|^3 
+ \frac{1}{192 \sigma^{1/2}} \| \nabla f(x_{t + 1}) + \psi'(x_{t + 1}) \|^{3/2}  \\
\\
& &
\quad
- \; \frac{\epsilon^{3/2}}{2(192)\sigma^{1/2}}
- \frac{2^9 L^3}{3 \sigma^2} \|x_{t} - x_0\|^3 \\
\\
& \overset{\eqref{BigGrad}}{\geq} &
\frac{\epsilon^{3/2}}{2(192)\sigma^{1/2}}
+ \frac{\sigma}{48} \|x_{t + 1} - x_{t}\|^3
- \frac{2^9 L^3}{3 \sigma^2} \|x_{t} - x_0\|^3,
\ea
\eeq
for any $0 \leq t \leq t^{\star}$.
Finally, summing up these inequalities, and using the triangle inequality, we obtain
$$
\ba{rcl}
F(x_0) - F(x_{t^{\star} + 1})
& \geq &
\frac{\epsilon^{3/2}}{2(192)\sigma^{1/2}} (t^{\star} + 1)
+ 
\frac{\sigma}{48} \sum\limits_{i = 1}^{t^{\star} + 1}
r_i^3
- \frac{2^9 L^3}{3 \sigma^2}
\sum\limits_{i = 1}^{t^{\star}} 
\Bigl(  \sum\limits_{j = 1}^i  r_j \Bigr)^3, 
\ea
$$ 
where $r_i := \|x_i - x_{i - 1}\|$.
Using Lemma B.1 from \cite{doikov2023second} and our
choice of $\sigma$ \eqref{HFSimgaH}
we conclude that
$$
\ba{rcl}
F(x_0) - F(x_{t^{\star}})
& \geq &
\frac{\epsilon^{3/2}}{2(192)\sigma^{1/2}} (t^{\star} + 1),
\ea
$$
which contradicts that inequality in Step 4 does not hold.
Hence, $t^{\star} = m$ and $\alpha = \text{\ttfamily success}$.
\end{proof}

For establishing the global convergence 
to a \textit{second-order stationary point}, we can use our procedure
with a stronger guarantee on the solution to the subrpoblem \eqref{FOMCond2}.
This is optional.
In case we use extra guarantee \eqref{FOMCond2}, 
the procedure should not be stopped in Step 3 anymore,
since then we are interested in points with \textit{both} small norm of the gradient and bounded smallest eigenvalue. 

We can justify the following analogue of Lemma~\ref{LemmaHF} when using condition \eqref{FOMCond2}:

\begin{lemma} \label{LemmaHF2}
	Consider the sequence $\{ x_t \}_{t = 1}^m$ generated by Algorithm~\ref{alg:HessianFree}
	with extra condition \eqref{FOMCond2} 
	on the inexact solution to the subproblem and without stop\footnote{Thus, $\alpha$ can be either {\ttfamily success} or {\ttfamily halt} in this case.} in Step 3.
	Then, under the conditions of Lemma~\ref{LemmaHF}, we have either
	\beq \label{HF2SmallGrad}
	\ba{rcl}
	\min\limits_{ 1 \leq t \leq m }
	\biggl[ \, \Delta_t \; \Def \;  \max\Bigl\{  
	\| \nabla f(x_t) + \psi'(x_t) \|, \;
	\frac{1}{\sigma} \bigl(  \frac{2}{3} \bigr)^{\frac{10}{3}}
	\bigl[ \xi(x_t) \big]^2
	\Bigr\} \, \biggr] & \leq & \epsilon,
	\ea
	\eeq
	or
	\beq \label{FOFuncProgress2}
	\ba{rcl}
	F(x) - F(\hat{x}) & \geq & \frac{\epsilon^{3/2}}{2 \cdot (192) \sigma^{1/2}} m.
	\ea
	\eeq
\end{lemma}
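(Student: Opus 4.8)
Here is my plan for the proof.

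The plan is to mirror the proof of Lemma~\ref{LemmaHF} line by line, the only substantive change being that wherever that proof invokes the one-step estimate \eqref{InexactProgressThStep}, I will instead use its strengthened form \eqref{InexactProgressSOThStep}, which is legitimate here because the extra condition \eqref{FOMCond2} imposed on each $x_{t+1}$ is exactly hypothesis \eqref{InexatSOThStep} of Theorem~\ref{ThStep} with $\theta=\sigma$. So I would begin by assuming that \eqref{HF2SmallGrad} fails, i.e. $\Delta_t>\epsilon$ for every $1\le t\le m$, and aim to derive \eqref{FOFuncProgress2}. Since Step~3 of Algorithm~\ref{alg:HessianFree} is switched off, the procedure can only terminate at Step~1 (output {\ttfamily success}, so $\hat x=x_m$) or at Step~4 (output {\ttfamily halt}); letting $t^\star\le m$ be the last value of $t$ reached at Step~1, it is enough to prove $t^\star=m$, because then the Step~4 test held for $t=m-1$, which is precisely \eqref{FOFuncProgress2}.

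The only genuinely new computation is to convert ``$\Delta_{t+1}>\epsilon$'' into a uniform lower bound on the maximum in \eqref{InexactProgressSOThStep}. If the gradient term of $\Delta_{t+1}$ dominates, then $\tfrac{1}{3\cdot2^6\sigma^{1/2}}\|\nabla f(x_{t+1})+\psi'(x_{t+1})\|^{3/2}>\tfrac{\epsilon^{3/2}}{192\,\sigma^{1/2}}$ at once. If instead $\tfrac1\sigma\bigl(\tfrac23\bigr)^{10/3}[\xi(x_{t+1})]^2>\epsilon$, then $[\xi(x_{t+1})]^3>\sigma^{3/2}\epsilon^{3/2}\bigl(\tfrac32\bigr)^5$, hence $\tfrac{1}{2\cdot3^6\sigma^2}[\xi(x_{t+1})]^3>\tfrac{(3/2)^5}{2\cdot3^6}\cdot\tfrac{\epsilon^{3/2}}{\sigma^{1/2}}=\tfrac{\epsilon^{3/2}}{192\,\sigma^{1/2}}$, using $(3/2)^5/(2\cdot3^6)=1/(2^6\cdot3)=1/192$. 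So in either case the maximum in \eqref{InexactProgressSOThStep} is at least $\tfrac{\epsilon^{3/2}}{192\,\sigma^{1/2}}$ — and this identity is exactly why the weight $\bigl(\tfrac23\bigr)^{10/3}$ sits in front of $[\xi(x_t)]^2$ in the definition of $\Delta_t$.

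From there everything is routine and parallels Lemma~\ref{LemmaHF}. Supposing $t^\star<m$, the Step~4 inequality fails for $t=t^\star$, and $\Delta_{t+1}>\epsilon$ for all $0\le t\le t^\star$ (since $t+1\le m$). As in \eqref{DeltaBBound}, Lemma~\ref{LemmaHessFO} and the choice of $h$ in \eqref{HFSimgaH} give $\|B-\nabla^2 f(x)\|\le\delta_B:=\tfrac{\sqrt n L}{2}h$ with the associated error term bounded by $\tfrac{\epsilon^{3/2}}{2\cdot192\,\sigma^{1/2}}$. Plugging $\delta_g:=0$ and $z:=x=x_0$ into Theorem~\ref{ThStep} in its strengthened form \eqref{InexactProgressSOThStep} and combining with the previous paragraph, each step yields
\beq
\ba{rcl}
F(x_t)-F(x_{t+1}) & \ge & \dfrac{\epsilon^{3/2}}{2\cdot192\,\sigma^{1/2}}+\dfrac{\sigma}{48}\|x_{t+1}-x_t\|^3-\dfrac{171\,L^3}{\sigma^2}\|x_t-x_0\|^3 ,\qquad 0\le t\le t^\star .
\ea
\eeq
I would then sum over $t$, bound $\|x_t-x_0\|\le\sum_{j=1}^{t}r_j$ with $r_j:=\|x_j-x_{j-1}\|$ by the triangle inequality, and apply Lemma~B.1 of \cite{doikov2023second} together with $\sigma\ge2^4\bigl(\tfrac23\bigr)^{1/3}mL$ to absorb $\tfrac{171L^3}{\sigma^2}\sum_i\bigl(\sum_{j\le i}r_j\bigr)^3$ into $\tfrac{\sigma}{48}\sum_i r_i^3$, reaching $F(x_0)-F(x_{t^\star+1})\ge\tfrac{\epsilon^{3/2}}{384\,\sigma^{1/2}}(t^\star+1)$, which contradicts the Step~4 test failing. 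Hence $t^\star=m$, $\alpha={\ttfamily success}$, $\hat x=x_m$, and \eqref{FOFuncProgress2} holds.

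The main obstacle is really just the eigenvalue bookkeeping of the second paragraph: one must verify that the weight $\bigl(\tfrac23\bigr)^{10/3}$ in $\Delta_t$ is calibrated so that the $\xi$-branch feeds the cubic eigenvalue term of \eqref{InexactProgressSOThStep} with the same constant $1/192$ as the gradient branch (and, as a minor point, that the slightly larger error coefficient $171$ in \eqref{InexactProgressSOThStep} still fits under the bound on $h$ from \eqref{HFSimgaH}). Everything else is a verbatim re-run of machinery already in place — Theorem~\ref{ThStep}, Lemma~\ref{LemmaHessFO}, and the summation Lemma~B.1 of \cite{doikov2023second} — exactly as used for Lemma~\ref{LemmaHF}.
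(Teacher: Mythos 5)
Your proposal is correct and follows essentially the same route as the paper: assume \eqref{HF2SmallGrad} fails, invoke the strengthened one-step bound \eqref{InexactProgressSOThStep} (available because \eqref{FOMCond2} is \eqref{InexatSOThStep}), observe that $\Delta_{t+1}>\epsilon$ forces the max term to be at least $\tfrac{\epsilon^{3/2}}{192\,\sigma^{1/2}}$, and then repeat the summation argument of Lemma~\ref{LemmaHF} via Lemma~B.1 of \cite{doikov2023second}. Your calibration check that $(3/2)^5/(2\cdot 3^6)=1/192$ is exactly the computation the paper leaves implicit when it writes the one-step progress as $\tfrac{1}{192\sigma^{1/2}}\Delta_{t+1}^{3/2}$, so nothing is missing.
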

\begin{proof}
	Suppose that \eqref{HF2SmallGrad} does not hold, hence
	\beq \label{HF2BigGrad}
	\ba{rcl}
	\Delta_t & \geq & 
	\epsilon, \qquad 1 \leq t \leq m.
	\ea
	\eeq
	In view of extra inexact condition \eqref{FOMCond2},
	from Theorem~\ref{ThStep} with $\delta_g := 0$ and $z := x = x_0$ we obtain
	the following guarantee for one step:
	$$
	\ba{rcl}
	F(x_t) - F(x_{t + 1})
	& \overset{\eqref{InexactProgressSOThStep}}{\geq} &
	\frac{\sigma}{48}\|x_{t + 1} - x_t\|^3
	+ \frac{1}{192 \sigma^{1/2}} \Delta_{t + 1}^{3/2}
	- \frac{2^9}{3 \sigma^2} \delta_B^3 - \frac{2^9 L^3}{3\sigma^2}\|x_t - x_0\|^3 \\
	\\
	& \overset{\eqref{DeltaBBound}, \eqref{HF2BigGrad}}{\geq} &
	\frac{\epsilon^{3/2}}{2(192)\sigma^{1/2}}
	+ \frac{\sigma}{48} \|x_{t + 1} - x_{t}\|^3
	- \frac{2^9 L^3}{3 \sigma^2} \|x_{t} - x_0\|^3.
	\ea
	$$
	It remains to sum up these inequalities for all $0 \leq t \leq m - 1$ 
	and apply the same reasoning as in Lemma~\ref{LemmaHF}
	to get \eqref{FOFuncProgress2}.
\end{proof}

We are ready to present our whole algorithm, 
which is first-order implementation of CNM.
It uses procedure {\ttfamily CubicSteps} as the basic subroutine.

\begin{algorithm}[h!]
	\caption{\textbf{First-Order CNM}} \label{alg:FirstOrderCNM}
	\SetKwInOut{Input}{input}\SetKwInOut{Output}{output}
	\BlankLine
	\textbf{Step 0.} Given $x_0 \in Q$, $\tau_0 > 0$, $\epsilon > 0$, 
	$m \in \mathbb{N} \setminus \{ 0 \}$, set $k : = 0$.
	\BlankLine
	{\bf Step 1.}  Set $\ell := 0$.
	\BlankLine
	{\bf Step 1.1.} Using
	\beq \label{AlgFOSigm}
	\ba{rcl}
	\sigma_{k, \ell} & = & 
	2^4 \bigl( \frac{2}{3} \bigr)^{1/3} (2^{\ell} \tau_k) m
	\ea
	\eeq
	and
	\beq \label{AlgFOH}
	\ba{rcl}
	h_{k, \ell} & = & \Bigl[ 
	\frac{3 \sigma_{k, \ell}^{3/2} \epsilon^{3/2}}{2^7(192)n^{3/2} (2^{\ell} \tau_k)^3} 
	\Bigr]^{1/3}
	\ea
	\eeq
	compute $B_{k, \ell} = \frac{1}{2}\bigl(A_{k, \ell} + A_{k, \ell}^{\top} \bigr)$ with
	\beq \label{AlgFOA}
	\ba{rcl}
	A_{k, \ell} & = &
	\Bigl[  \frac{\nabla f(x_k + h_{k, \ell} e_1) - \nabla f(x_k)}{h_{k, \ell}}
	\; , \; \ldots \; , \;
	\frac{\nabla f(x_k + h_{k, \ell} e_n) - \nabla f(x_k)}{h_{k, \ell}}  \Bigr].
	\ea
	\eeq
	\BlankLine
	{\bf Step 1.2.} Perform $m$ inexact Cubic steps using the same Hessian approximation:
	$$
	\ba{rcl}
	(\hat{x}_{k, \ell}, \alpha_{k, \ell})  & := & 
	\text{\ttfamily CubicSteps}(x_k, \, B_{k, \ell}, \, \sigma_{k, \ell}, \, m, \, \epsilon).
	\ea
	$$
	\BlankLine
	{\bf Step 2.} If $\alpha_{k, \ell} = \text{\ttfamily halt}$, then 
	set $\ell := \ell + 1$ and go to Step 1.1.
	\BlankLine
	{\bf Step 3.} Set $x_{k + 1} = \hat{x}_{k, \ell}$.
	\BlankLine
	{\bf Step 4.}
	If $\alpha_{k, \ell} = \text{\ttfamily success}$,
	then $\tau_{k + 1} = \max\{ \tau_0,  \, 2^{\ell_k - 1} \tau_k\}$, $k := k + 1$, and go to Step 1. 
	Stop otherwise.
	\BlankLine
\end{algorithm}

Due to Lemmas~\ref{LemmaHF} and \ref{LemmaHF2}, this algorithm is well-defined
and its inner loop of the adaptive search (Steps 1-2) always quits 
with a sufficiently big finite value of $\ell$ and the method continues to Step~3.
In the following lemmas, we show how to bound the maximal value for the regularization parameter
and the total number of inner loop steps in our algorithm.

\begin{lemma} \label{LemmaHFTau}
	Suppose that A1 holds and let
	$\{ \tau_k \}_{k \geq 0}$ be generated by
	Algorithm~\ref{alg:FirstOrderCNM}. Then
	\beq \label{HFTauBound}
	\ba{rcl}
	\tau_k & \leq & \max\bigl\{  \tau_0, L  \bigr\},
	\qquad \forall k \geq 0.
	\ea
	\eeq
\end{lemma}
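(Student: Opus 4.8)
The plan is to prove \eqref{HFTauBound} by induction on $k$, exploiting the fact that the inner adaptive search at each outer iteration cannot \emph{halt} once the trial regularization parameter has caught up with the (unknown) Lipschitz constant $L$.

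The key preliminary observation is an equivalence between the hypotheses \eqref{HFSimgaH} of Lemma~\ref{LemmaHF} and a single threshold condition on $2^{\ell}\tau_k$. Indeed, substituting the definition \eqref{AlgFOSigm} into the first requirement of \eqref{HFSimgaH} shows that $\sigma_{k,\ell} \geq 2^4 (2/3)^{1/3} m L$ holds if and only if $2^{\ell}\tau_k \geq L$; similarly, substituting \eqref{AlgFOSigm}--\eqref{AlgFOH} into the second requirement of \eqref{HFSimgaH} shows, after cancelling the common positive factor $3\sigma_{k,\ell}^{3/2}\epsilon^{3/2}/(2^7(192) n^{3/2})$, that $h_{k,\ell}$ meets the bound exactly when $(2^{\ell}\tau_k)^3 \geq L^3$, i.e. again iff $2^{\ell}\tau_k \geq L$. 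Hence both hypotheses of Lemma~\ref{LemmaHF} are in force at the pair $(k,\ell)$ precisely when $2^{\ell}\tau_k \geq L$, and therefore, by the contrapositive of Lemma~\ref{LemmaHF}, whenever $\alpha_{k,\ell} = \text{\ttfamily halt}$ we must have $2^{\ell}\tau_k < L$.

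Now for the induction. The base case $k=0$ is immediate from $\tau_0 \leq \max\{\tau_0, L\}$. Assume $\tau_k \leq \max\{\tau_0, L\}$, and let $\ell_k$ be the value of the adaptive-search index at which iteration $k$ exits its inner loop, so that $\alpha_{k,\ell} = \text{\ttfamily halt}$ for all $0 \leq \ell < \ell_k$ while $\alpha_{k,\ell_k} = \text{\ttfamily success}$ (this is the only scenario in which $\tau_{k+1}$ is produced by Step~4). If $\ell_k \geq 1$, then $\alpha_{k,\ell_k-1} = \text{\ttfamily halt}$, so by the previous paragraph $2^{\ell_k-1}\tau_k < L$, whence
$$
\tau_{k+1} \;=\; \max\bigl\{\tau_0,\; 2^{\ell_k-1}\tau_k\bigr\} \;\leq\; \max\{\tau_0, L\}.
$$
If instead $\ell_k = 0$, then $\tau_{k+1} = \max\{\tau_0, \tau_k/2\}$, and the inductive hypothesis gives $\tau_k/2 \leq \tau_k \leq \max\{\tau_0, L\}$, so again $\tau_{k+1} \leq \max\{\tau_0, L\}$. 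This completes the induction. The same argument applies without change to the second-order variant of the algorithm (the one using condition \eqref{FOMCond2}), since Lemma~\ref{LemmaHF2} has the same hypotheses \eqref{HFSimgaH}.

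I expect the only place requiring care to be the equivalence in the second paragraph: one must verify that the constants chosen in \eqref{AlgFOSigm} and \eqref{AlgFOH} are tuned precisely so that the two \emph{separate} demands of \eqref{HFSimgaH} — one on $\sigma_{k,\ell}$, one on $h_{k,\ell}$ — collapse to the \emph{same} inequality $2^{\ell}\tau_k \geq L$. This is a short algebraic check, and everything else is routine bookkeeping with the $\max$ in the update rule.
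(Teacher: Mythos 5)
Your proof is correct and follows essentially the same route as the paper's: an induction on $k$ whose inductive step invokes Lemma~\ref{LemmaHF} at the index $\ell_k - 1$ where the procedure halted, to conclude $2^{\ell_k-1}\tau_k < L$ (the paper phrases this as a contradiction, you as a contrapositive, which is the same argument). Your explicit algebraic check that both conditions in \eqref{HFSimgaH} collapse to $2^{\ell}\tau_k \geq L$ is a worthwhile detail the paper leaves implicit, and your handling of the $\ell_k = 0$ case via $\tau_{k+1} = \max\{\tau_0, \tau_k/2\}$ is in fact slightly more faithful to Step~4 of the algorithm than the paper's own write-up.
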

\proof
Clearly, \eqref{HFTauBound} is true for $k = 0$. 
Suppose that it is also true for some $k \geq 0$.
If $\ell_k = 0$, then it follows
from the definition of $\tau_{k + 1}$
and from the induction assumption that
$$
\ba{rcl}
\tau_{k + 1} & = & \frac{1}{2} \tau_k
\;\; < \;\; \tau_k
\;\; \leq \;\; \max\bigl\{  \tau_0, L  \bigr\},
\ea
$$
and so \eqref{HFTauBound} is true for $k + 1$.
Now, suppose that $\ell_k > 0$. In this case,
we also must have
$$
\ba{rcl}
\tau_{k + 1} & \leq & \max\bigl\{  \tau_0, L  \bigr\},
\ea
$$
since otherwise we would have
$$
\ba{rcl}
2^{\ell_k - 1} \tau_k & > & L
\ea
$$
and by \eqref{AlgFOSigm}, \eqref{AlgFOH}, \eqref{AlgFOA}
and Lemma~\ref{LemmaHF},
the inner procedure {\ttfamily CubicSteps} would return 
$\alpha_{k, \ell} \in \{ \text{\ttfamily success}, \text{ \ttfamily solution}\}$
for some $\ell \leq \ell_k - 1$,
contradicting the definition of $\ell_k$.
Thus, \eqref{HFTauBound} is also true for $k + 1$ in this case.
\qed

\begin{lemma} \label{LemmaHFNumber}
Suppose that A1 holds and let
$\text{FO}_{T}$
be the total number of function and
gradient evaluations of $f(\cdot)$ performed by Algorithm~\ref{alg:FirstOrderCNM}
during the first $T$ iteration.
Then
\beq \label{HFNumberBound}
\ba{rcl}
\text{FO}_{T}
& \leq & 
\bigl[
5 + 2(n + m)
\bigr] \cdot T
\; + \; \bigl[ 2 + n + m\bigr] \cdot
\log_2 \frac{\max\{ \tau_0, L \}}{\tau_0}.
\ea
\eeq
\end{lemma}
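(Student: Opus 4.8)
The plan is to count, iteration by iteration, all the oracle calls that Algorithm~\ref{alg:FirstOrderCNM} performs, separating the cost of the adaptive search (Step~1.1 recomputations of $B_{k,\ell}$) from the cost of the $m$ inner Cubic steps in Step~1.2, and then to control the total number of \emph{failed} inner-loop trials by a telescoping argument on $\tau_k$ borrowed from Lemma~\ref{LemmaHFTau}. Concretely, at iteration $k$ the algorithm runs through $\ell = 0, 1, \ldots, \ell_k$. Each value of $\ell$ costs the construction of $A_{k,\ell}$ in \eqref{AlgFOA}, which requires the $n$ gradients $\nabla f(x_k + h_{k,\ell}e_i)$ plus $\nabla f(x_k)$ (the latter computed once per iteration, or re-counted as at most one extra gradient per trial), so roughly $n+1$ gradient evaluations; then a call to {\ttfamily CubicSteps} which, in the worst case of a {\ttfamily halt}, performs up to $m$ steps, each step costing one subproblem solve (no oracle call, by the optimality-condition discussion) plus the evaluation of $F(x_{t+1})$ in Step~1.2's stopping test \eqref{FOMCond} and of $\nabla f(x_{t+1})$ in Step~3 — i.e. at most one function and one gradient per inner step, so about $2m$ evaluations, plus a couple of bookkeeping evaluations of $F(x_0)$ and the like, which is where the additive constant ``$5$'' comes from. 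Thus each trial $\ell$ at iteration $k$ costs at most $\bigl[C + (n+1) + 2m\bigr]$ evaluations for an absolute constant $C$; collecting, a ``successful'' trial at iteration $k$ costs at most $[5 + 2(n+m)]$ (this is the per-iteration linear term in $T$), and each \emph{failed} trial costs at most $[2 + n + m]$.

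The only nonroutine point is bounding $\sum_{k=0}^{T-1} \ell_k$, the total number of failed inner-loop trials over the first $T$ iterations. Here I would reuse exactly the mechanism behind Lemma~\ref{LemmaHFTau}: the parameter $\tau_k$ is multiplied by $2^{\ell_k - 1}$ and then (when $\ell_k > 0$) passed on, and by \eqref{HFTauBound} it never exceeds $\max\{\tau_0, L\}$; on the other hand Step~4 sets $\tau_{k+1} = \max\{\tau_0,\, 2^{\ell_k-1}\tau_k\}$, so whenever $\ell_k \ge 1$ the ratio $\tau_{k+1}/\tau_k$ is at least $2^{\ell_k - 1}$, while whenever $\ell_k = 0$ the ratio is $1/2$ (or the value is clamped at $\tau_0$). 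Multiplying these ratios over $k = 0, \ldots, T-1$ and telescoping, $\prod_k (\tau_{k+1}/\tau_k) = \tau_T/\tau_0 \le \max\{\tau_0,L\}/\tau_0$, which forces
$$
\sum_{k=0}^{T-1} (\ell_k - 1)_{+} \;\le\; \log_2 \frac{\max\{\tau_0, L\}}{\tau_0}
$$
after absorbing the clamping-at-$\tau_0$ steps into the bound (they only help, since they replace a potentially smaller value by $\tau_0$). Since $\ell_k \le (\ell_k - 1)_+ + 1$ and the ``$+1$'' per iteration is already counted in the successful-trial term, the number of extra (failed) trials summed over all iterations is at most $\log_2 (\max\{\tau_0,L\}/\tau_0)$.

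Putting the two pieces together: the $T$ successful trials contribute at most $[5 + 2(n+m)]\cdot T$ evaluations, and the at most $\log_2(\max\{\tau_0,L\}/\tau_0)$ failed trials contribute at most $[2 + n + m]$ evaluations each, giving
$$
\text{FO}_T \;\le\; \bigl[5 + 2(n+m)\bigr]\cdot T \;+\; \bigl[2 + n + m\bigr]\cdot \log_2 \frac{\max\{\tau_0, L\}}{\tau_0},
$$
which is \eqref{HFNumberBound}. The main obstacle is getting the telescoping bound on $\sum_k \ell_k$ clean — in particular handling the $\max\{\tau_0, \cdot\}$ clamping in Step~4 correctly so that downward steps ($\ell_k = 0$) and clamped steps do not spoil the product estimate — and, secondarily, being careful in the per-trial accounting that $\nabla f(x_k)$ and the various $F(\cdot)$ evaluations are each counted at most the claimed number of times (this is what fixes the precise constants $5$, $2$, and the coefficients of $n$ and $m$).
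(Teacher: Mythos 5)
Your overall route is the paper's: charge $n+m+2$ oracle calls to each inner trial (the $n$ gradients for $A_{k,\ell}$ plus the $\approx m$ evaluations inside \texttt{CubicSteps}, up to bookkeeping constants), and control the total number of trials through the evolution of $\tau_k$; your final constants match \eqref{HFNumberBound}. However, the one inequality you single out as ``the only nonroutine point'' is false as stated. The telescoping product bounds the \emph{signed} sum: since $\tau_{k+1}=\max\{\tau_0,2^{\ell_k-1}\tau_k\}\ge 2^{\ell_k-1}\tau_k$, one gets $\sum_{k}(\ell_k-1)\le\log_2(\tau_T/\tau_0)\le\log_2\bigl(\max\{\tau_0,L\}/\tau_0\bigr)$, but not the same bound for $\sum_{k}(\ell_k-1)_+$. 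Taking the positive part throws away the $-1$ increments of $\log_2\tau_k$ contributed by iterations with $\ell_k=0$, and those negative increments are exactly what keeps the sum bounded. Concretely, take $\tau_0=1$, $L=4$, and let the iterations alternate $\ell_k=2$ (so $\tau_{k+1}=2\tau_k$) and $\ell_k=0$ (so $\tau_{k+1}=\max\{1,\tau_k/2\}$); this is consistent with Lemmas~\ref{LemmaHF} and~\ref{LemmaHFTau} ($\tau_k$ oscillates between $1$ and $2$), yet $\sum_k(\ell_k-1)_+=\lceil T/2\rceil$ grows linearly in $T$ while your right-hand side is the constant $2$.

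The repair is small and lands you exactly on the paper's argument: keep the signed quantity. Use the per-iteration identity $1+(n+m+2)(\ell_k+1)=[5+2(n+m)]+(n+m+2)(\ell_k-1)$, sum over $k$, and bound $\sum_k(\ell_k-1)$ by $\log_2\bigl(\max\{\tau_0,L\}/\tau_0\bigr)$ via the telescoping above. The point your bookkeeping misses is that the term $[5+2(n+m)]$ already over-charges every iteration with $\ell_k=0$ (whose true cost is only $3+n+m$), and it is precisely this over-charge that pays for the extra failed trials at iterations with $\ell_k\ge 2$; replacing $\ell_k-1$ by $(\ell_k-1)_+$ destroys that cancellation. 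With this change, your accounting of the constants goes through and reproduces \eqref{HFNumberBound}.
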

\proof
The total number of function and gradient evaluations
performed at the $k$the iteration of Algorithm~\ref{alg:FirstOrderCNM}
is bounded from above by
$$
\ba{rcl}
1 + \bigl[  (n + 1) + (m + 1) \bigr] \cdot (\ell_k + 1).
\ea
$$
Since $\tau_{k + 1} = 2^{\ell_k - 1} \tau_k$, we have
$$
\ba{rcl}
\ell_k - 1 & = & \log_2 \tau_{k + 1} - \log_2 \tau_k,
\ea
$$
and so
$$
\ba{rcl}
1 + \bigl[  (n + 1) + (m + 1) \bigr] \cdot (\ell_k + 1)
& = &
1 + \bigl[ 2 + n + m \bigr] \cdot (2 + \log_2 \tau_{k + 1} - \log_2 \tau_k).
\ea
$$
Thus,
$$
\ba{rcl}
\text{FO}_{T}
& \leq & 
\sum\limits_{k = 0}^{T - 1}
1 + \bigl[ 2 + n + m \bigr] \cdot (2 + \log_2 \tau_{k + 1} - \log_2 \tau_k) \\
\\
& = &
T + \bigl[ 2 + n + m\bigr] \cdot 2T
+ \bigl[ 2 + n + m\bigr] \cdot \log_2 \frac{\tau_{T}}{\tau_0} \\
\\
& \leq & 
\bigl[5 + 2(n + m) \bigr] \cdot T
+ \bigl[  2 + n + m \bigr] \cdot
\log_2 \frac{\max\{\tau_0, L  \}}{\tau_0},
\ea
$$
where the last inequality follows from Lemma~\ref{LemmaHFTau}.
\qed

We are ready to establish the global complexity bound
for our Hessian-free CNM.

\begin{theorem} \label{TheoremHF}
	Suppose that A1 holds and let $\{ x_k \}_{k \geq 1}$ be generated by
	Algorithm~\ref{alg:FirstOrderCNM}.
	Let $T(\epsilon) \leq +\infty$
	be the first iteration index such that $\| \nabla f(x_{T(\epsilon)})
	+ \psi'( x_{T(\epsilon)} ) \| \leq \epsilon$,
	for a certain $\psi'( x_{T(\epsilon)} ) \in \partial \psi( x_{T(\epsilon)} )$.
	We have
	\beq \label{HFTBound}
	\ba{rcl}
	T(\epsilon)
	& \leq & 
	\frac{(384) 2^{5/2} (\frac{2}{3})^{1/6} \max\{ \tau_0, L \}^{1/2}  (F(x_0) - F^{\star})  }{
		\sqrt{m} }
	\cdot \epsilon^{-3/2} 
	\ea
	\eeq
	and, consequently, the total number of the function and gradient evaluations is bounded as
	\beq \label{HFTotBound}
	\ba{rcl}
	\text{FO}_{T(\epsilon)}
	& \leq & 
	\frac{ [5 + 2(n + m)] }{\sqrt{m}}
	(384) 2^{5/2} \bigl(  \frac{2}{3} \bigr)^{1/6}
	\max\{ \tau_0, L \}^{1/2} ( F(x_0) - F^{\star} ) \cdot \epsilon^{-3/2} \\
	\\
	& & \qquad + \;
	[2 + n + m] \log_2 \frac{\max\{ \tau_0, L \}}{\tau_0}.
	\ea
	\eeq
\end{theorem}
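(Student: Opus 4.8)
The plan is to combine the per-outer-iteration progress guaranteed by Lemma~\ref{LemmaHF} with the upper bound on the regularization parameter from Lemma~\ref{LemmaHFTau}, and then feed the resulting iteration bound into Lemma~\ref{LemmaHFNumber}. First I would observe that, by definition of $T(\epsilon)$, for every outer iteration $k$ with $0 \le k \le T(\epsilon) - 1$ the procedure {\ttfamily CubicSteps} does not return {\ttfamily solution} (otherwise the gradient norm would already be $\le \epsilon$ at that point, contradicting minimality of $T(\epsilon)$; one should check the indexing, since the returned point becomes $x_{k+1}$). Hence by Lemma~\ref{LemmaHF} the status is {\ttfamily success} and we get the functional decrease
\beq \label{HFperstep}
\ba{rcl}
F(x_k) - F(x_{k+1}) & \geq & \dfrac{\epsilon^{3/2}}{2 \cdot (192) \sigma_{k, \ell_k}^{1/2}} \, m.
\ea
\eeq

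Next I would bound $\sigma_{k,\ell_k}$ from above. By \eqref{AlgFOSigm} we have $\sigma_{k,\ell_k} = 2^4 (\frac{2}{3})^{1/3} (2^{\ell_k} \tau_k) m$, and since $\tau_{k+1} = \max\{\tau_0, 2^{\ell_k - 1}\tau_k\} \ge 2^{\ell_k-1}\tau_k$ when the status is {\ttfamily success}, we get $2^{\ell_k}\tau_k \le 2\tau_{k+1} \le 2\max\{\tau_0, L\}$ by Lemma~\ref{LemmaHFTau}. Therefore $\sigma_{k,\ell_k}^{1/2} \le 2^{5/2}(\frac{2}{3})^{1/6}\max\{\tau_0,L\}^{1/2} m^{1/2}$, and substituting into \eqref{HFperstep} gives a decrease of at least $\dfrac{\epsilon^{3/2}\sqrt{m}}{2\cdot(192)\cdot 2^{5/2}(\frac{2}{3})^{1/6}\max\{\tau_0,L\}^{1/2}}$ per outer iteration. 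Telescoping from $k=0$ to $k=T(\epsilon)-1$ and using $F(x_{T(\epsilon)}) \ge F^\star$ yields $F(x_0) - F^\star \ge T(\epsilon)\cdot(\text{that quantity})$, which rearranges to \eqref{HFTBound} after collecting the constants ($2\cdot 192 = 384$).

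Finally, the bound \eqref{HFTotBound} follows by plugging \eqref{HFTBound} into Lemma~\ref{LemmaHFNumber}: replace $T$ there by the right-hand side of \eqref{HFTBound}, which multiplies the $[5 + 2(n+m)]$ factor, while the additive logarithmic term carries over unchanged. I expect the main obstacle to be purely bookkeeping: making sure the edge cases of the adaptive search are handled correctly — in particular that $\ell_k = 0$ still respects $2^{\ell_k}\tau_k \le 2\tau_{k+1}$ (here $\tau_{k+1} = \frac{1}{2}\tau_k$, so $2^{\ell_k}\tau_k = \tau_k = 2\tau_{k+1}$, consistent), and that the "halt" branch never fires when $\sigma$ and $h$ satisfy the hypotheses of Lemma~\ref{LemmaHF}. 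The second-order variant (using Lemma~\ref{LemmaHF2} in place of Lemma~\ref{LemmaHF}) would go through identically with $\Delta_t$ in place of the gradient norm.
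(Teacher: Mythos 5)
Your proposal is correct and follows essentially the same route as the paper's proof: invoke Lemma~\ref{LemmaHF} to get the per-outer-iteration decrease $\frac{\epsilon^{3/2}m}{384\,\sigma_{k,\ell_k}^{1/2}}$, bound $\sigma_{k,\ell_k}$ via $2^{\ell_k}\tau_k \le 2\tau_{k+1} \le 2\max\{\tau_0,L\}$ using Lemma~\ref{LemmaHFTau}, telescope against $F(x_0)-F^{\star}$, and substitute into Lemma~\ref{LemmaHFNumber}. Your bookkeeping (including the $\ell_k=0$ edge case and the constant $2\cdot 192 = 384$) matches the paper's argument.
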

\proof
By the definition of $T(\epsilon)$, we have
$$
\ba{rcl}
\| \nabla f(x_k) + \psi'(x_k) \| & \geq & \epsilon, \quad
\text{for} \quad k = 0, \ldots, T(\epsilon) - 1,
\quad
\text{and} \quad \forall \psi'(x_k) \in \partial \psi(x_k).
\ea
$$
Consequently, by Lemma~\ref{LemmaHF} we have
\beq \label{HFFuncGlob}
\ba{rcl}
F(x_k) - F(x_{k + 1}) & \geq &
\frac{\epsilon^{3/2}}{(384) \sigma_{k, \ell_k}^{1/2}}
\quad \text{for} \quad
k = 0, \ldots , T(\epsilon) - 1.
\ea
\eeq
Moreover, by Lemma~\ref{LemmaHFTau} we also have
\beq \label{HFSigmaBound}
\ba{rcl}
\sigma_{k, \ell_k} & = & 
2^4 \bigl( \frac{2}{3} \bigr)^{1/3}
m (2^{\ell_k} \tau_k)
\;\; = \;\;
2^4 \bigl(  \frac{2}{3} \bigr)^{1/3}
m (2 \tau_{k + 1})
\;\; \leq \;\;
2^5 \bigl( \frac{2}{3} \bigr)^{1/3}
m \cdot \max\{ \tau_0, L \}.
\ea
\eeq
Combining \eqref{HFFuncGlob} and \eqref{HFSigmaBound},
it follows that
$$
\ba{rcl}
F(x_k) - F(x_{k + 1})
& \geq & 
\frac{\epsilon^{3/2} \sqrt{m}}{(384) 2^{5/2} (\frac{2}{3})^{1/6} \max\{ \tau_0, L \}^{1/2}},
\quad \text{for} \quad
k = 0, \ldots, T(\epsilon) - 1.
\ea
$$
Summing up these inequalities and using
the lower bound $F^{\star}$  on $F(\cdot)$, we get
$$
\ba{rcl}
F(x_0) - F^{\star} & \geq & 
F(x_0) - F(x_{T(\epsilon)}) \\
\\
& = & 
\sum\limits_{k = 0}^{T(\epsilon) - 1}
F(x_k) - F(x_{k + 1}) \\
\\
& \geq & 
\frac{\epsilon^{3/2} \sqrt{m} }{ (384) 2^{5/2} (\frac{2}{3})^{1/6} \max\{ \tau_0, L \}^{1/2}}
T(\epsilon)
\ea
$$
which inplies \eqref{HFTBound}.
Finally, combining \eqref{HFTBound} and Lemma~\ref{LemmaHFNumber}
we obtain \eqref{HFTotBound}.
\qed

\begin{corollary}
By taking $m := n$, it follows from Theorem~\ref{TheoremHF}
that Algorithm~\ref{alg:FirstOrderCNM} needs at most
$$
\ba{c}
\mathcal{O}\bigl(   n^{1/2} \epsilon^{-3/2} + n \bigr)
\ea
$$
total function and gradient evaluations of $f(\cdot)$ to generate $x_k$
such that $\| \nabla f(x_k) + \psi'(x_k) \| \leq \epsilon$.
\end{corollary}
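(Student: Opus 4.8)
The plan is to specialize Theorem~\ref{TheoremHF} to the case $m := n$ and simply read off the orders of growth. First I would substitute $m = n$ into the bound \eqref{HFTBound} for $T(\epsilon)$. Since $\tau_0$ and $L$ are fixed constants independent of $n$ and $\epsilon$, the factor $\max\{\tau_0, L\}^{1/2}(F(x_0) - F^\star)$ is a constant, and the numerical prefactors $(384) \cdot 2^{5/2} (\tfrac{2}{3})^{1/6}$ are absolute constants. Hence $T(n) = \mathcal{O}(n^{-1/2} \epsilon^{-3/2})$ after dividing by $\sqrt{m} = \sqrt{n}$.

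Next I would substitute $m = n$ and this bound on $T(\epsilon)$ into the oracle-complexity bound \eqref{HFTotBound}. The first term is $\frac{[5 + 2(n+m)]}{\sqrt{m}}$ times a constant times $\epsilon^{-3/2}$; with $m = n$ this becomes $\frac{5 + 4n}{\sqrt{n}} \cdot \mathcal{O}(\epsilon^{-3/2}) = \mathcal{O}\bigl((\sqrt{n} + n^{-1/2}) \epsilon^{-3/2}\bigr) = \mathcal{O}(n^{1/2} \epsilon^{-3/2})$. The second term, $[2 + n + m]\log_2 \frac{\max\{\tau_0,L\}}{\tau_0}$, with $m = n$ is $\mathcal{O}(n)$, since the logarithm is a fixed constant (again independent of $n$ and $\epsilon$). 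Adding the two contributions yields the claimed bound $\mathcal{O}(n^{1/2}\epsilon^{-3/2} + n)$.

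There is essentially no obstacle here: the corollary is a direct arithmetic consequence of the theorem once one observes which quantities are constants. The only point deserving a word of care is the second (logarithmic) term in \eqref{HFTotBound}, which does not decay with $\epsilon$ and contributes the additive $\mathcal{O}(n)$ — this is why the stated bound carries the extra ``$+\, n$'' rather than being purely $\mathcal{O}(n^{1/2}\epsilon^{-3/2})$; for any fixed accuracy, or whenever $\epsilon^{-3/2} \gtrsim n^{1/2}$, this term is dominated and the bound reads $\mathcal{O}(n^{1/2}\epsilon^{-3/2})$.

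\begin{proof}
Apply Theorem~\ref{TheoremHF} with $m := n$. The quantities $\tau_0$, $L$, and $F(x_0) - F^\star$ do not depend on $n$ or $\epsilon$, and the numerical factors are absolute constants. From \eqref{HFTBound} with $m = n$,
\[
T(\epsilon) \;\leq\; \frac{C_1 \max\{\tau_0, L\}^{1/2}(F(x_0) - F^\star)}{\sqrt{n}} \cdot \epsilon^{-3/2}
\;=\; \mathcal{O}\bigl( n^{-1/2} \epsilon^{-3/2} \bigr),
\]
where $C_1 := (384) \, 2^{5/2} (\tfrac{2}{3})^{1/6}$. Substituting $m = n$ into \eqref{HFTotBound}, the first term is
\[
\frac{5 + 2(n + n)}{\sqrt{n}} \, C_1 \max\{\tau_0, L\}^{1/2}(F(x_0) - F^\star) \cdot \epsilon^{-3/2}
\;=\; \mathcal{O}\bigl( n^{1/2}\epsilon^{-3/2} \bigr),
\]
since $(5 + 4n)/\sqrt{n} = \mathcal{O}(n^{1/2})$, while the second term is
\[
[2 + n + n]\,\log_2 \frac{\max\{\tau_0, L\}}{\tau_0} \;=\; \mathcal{O}(n),
\]
as the logarithmic factor is an $n$- and $\epsilon$-independent constant. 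Summing, the total number of function and gradient evaluations of $f(\cdot)$ needed to generate $x_k$ with $\|\nabla f(x_k) + \psi'(x_k)\| \leq \epsilon$ is
\[
\text{FO}_{T(\epsilon)} \;=\; \mathcal{O}\bigl( n^{1/2}\epsilon^{-3/2} + n \bigr),
\]
as claimed.
\end{proof}
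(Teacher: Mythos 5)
Your proposal is correct and is exactly the intended derivation: the paper states this corollary without proof precisely because it follows by substituting $m = n$ into \eqref{HFTBound} and \eqref{HFTotBound} and observing that $(5+4n)/\sqrt{n} = \mathcal{O}(n^{1/2})$ while the logarithmic term contributes the additive $\mathcal{O}(n)$. Your remark on why the ``$+\,n$'' term appears is a correct and useful clarification, but the argument is the same as the paper's.
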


Let us establish a similar complexity result
for reaching the \textit{second-order} stationary points
by Algorithm~\ref{alg:FirstOrderCNM}, providing the guarantee
on the values of $\xi(\cdot)$  (see definition \eqref{XiDef}).

\begin{theorem} \label{TheoremHF2}
	Suppose that A1 holds. Let $x_{k, \ell}(t)$
	be the $t$-th iterate of Algorithm~\ref{alg:HessianFree}
	with extra condition \eqref{FOMCond2}
	and without stop in Step 3,
	applied at the $k$-th iteration
	of Algorithm~\ref{alg:FirstOrderCNM}.
	Let $T(\epsilon) \leq +\infty$ be the first iteration index such that
	\beq \label{HFSOGuarantee}
	\ba{rcl}
	\max\Bigl\{ \,
	\| \nabla f(x_{T( \epsilon ), \ell}(t) ) + \psi'(x_{T( \epsilon ), \ell}(t) ) \|,
	\;
	\frac{1}{2^2 3^3 \cdot m \cdot \max\{\tau_0, L \}}
	\bigl[ \xi(x_{T( \epsilon ), \ell}(t) ) \bigr]^2
	\,
	\Bigr\} & \leq & \epsilon,
	\ea
	\eeq
	for some $\ell \geq 0$ and $t \in \{0, \ldots, m\}$.
	Then, bounds \eqref{HFTBound} and \eqref{HFTotBound} hold.
\end{theorem}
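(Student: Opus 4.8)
The statement is the second-order analogue of Theorem~\ref{TheoremHF}, and the plan is to transcribe that proof with Lemma~\ref{LemmaHF2} in the role of Lemma~\ref{LemmaHF}. Fix $k\in\{0,\dots,T(\epsilon)-1\}$. By minimality of $T(\epsilon)$, the guarantee \eqref{HFSOGuarantee} fails for every adaptive-search index $\ell$ arising at the $k$-th outer iteration and every $t\in\{0,\dots,m\}$. It then suffices to establish, for each such $k$: (a) the inner loop of Algorithm~\ref{alg:HessianFree} (run with condition \eqref{FOMCond2} and without the stop in Step~3) terminates with status \texttt{success} at some index $\ell_k$; and (b) $\sigma_{k,\ell_k}\le 2^{5}(\tfrac23)^{1/3}m\max\{\tau_0,L\}$. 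Everything after (a)--(b) is mechanical, so the body of the argument is the verification of these two facts.

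\textbf{Proving (a) and (b).} The calibration linking \eqref{HFSOGuarantee} to condition \eqref{HF2SmallGrad} is the elementary identity, valid whenever $\sigma_{k,\ell}\le 2^{5}(\tfrac23)^{1/3}m\max\{\tau_0,L\}$,
$$
\frac{(2/3)^{10/3}}{\sigma_{k,\ell}}\ \ge\ \frac{(2/3)^{10/3}}{2^{5}(2/3)^{1/3}\,m\,\max\{\tau_0,L\}}\ =\ \frac{(2/3)^{3}}{2^{5}\,m\,\max\{\tau_0,L\}}\ =\ \frac{1}{2^{2}3^{3}\,m\,\max\{\tau_0,L\}},
$$
so that the quantity $\Delta_t$ of \eqref{HF2SmallGrad} built with $\sigma=\sigma_{k,\ell}$ is at least the maximum appearing in \eqref{HFSOGuarantee}; hence, under the standing assumption that \eqref{HFSOGuarantee} fails at every $t$, condition \eqref{HF2SmallGrad} fails at index $\ell$. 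To reach such an index inside the adaptive search, set $\ell^{\dagger}:=\min\{\ell\ge 0:2^{\ell}\tau_k\ge L\}$; by \eqref{AlgFOSigm} and the definition \eqref{AlgFOH} of $h_{k,\ell}$, the hypotheses \eqref{HFSimgaH} of Lemma~\ref{LemmaHF2} hold at $\ell=\ell^{\dagger}$, and a one-line case check (using the inductive bound $\tau_k\le\max\{\tau_0,L\}$ when $\ell^{\dagger}=0$, and $2^{\ell^{\dagger}-1}\tau_k<L$ when $\ell^{\dagger}\ge 1$) gives $\sigma_{k,\ell^{\dagger}}\le 2^{5}(\tfrac23)^{1/3}m\max\{\tau_0,L\}$. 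Lemma~\ref{LemmaHF2} then yields either \eqref{HF2SmallGrad} at $\ell^{\dagger}$ --- which by the calibration forces \eqref{HFSOGuarantee} to hold at $\ell^{\dagger}$, contradicting $k<T(\epsilon)$ --- or the procedure returns \texttt{success} at $\ell^{\dagger}$; since (with Step~3 gone) the loop can only stop with \texttt{success}, it does so at some $\ell_k\le\ell^{\dagger}$, which is (a), and minimality of $\ell^{\dagger}$ gives $\tau_{k+1}=\max\{\tau_0,2^{\ell_k-1}\tau_k\}\le\max\{\tau_0,L\}$, whence (b) via \eqref{AlgFOSigm} (the $k$-induction closing exactly as in Lemma~\ref{LemmaHFTau}).

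\textbf{Conclusion and main obstacle.} Given (a), Steps~1 and~4 of Algorithm~\ref{alg:HessianFree} directly yield $F(x_k)-F(x_{k+1})\ge\frac{\epsilon^{3/2}}{384\,\sigma_{k,\ell_k}^{1/2}}m$, and (b) turns this into the per-iteration decrease $\frac{\epsilon^{3/2}\sqrt m}{384\cdot 2^{5/2}(2/3)^{1/6}\max\{\tau_0,L\}^{1/2}}$ used in the proof of Theorem~\ref{TheoremHF}; summing over $k=0,\dots,T(\epsilon)-1$ and bounding the telescoped decrease by $F(x_0)-F^{\star}$ gives \eqref{HFTBound}, after which Lemma~\ref{LemmaHFNumber} (unaffected by adding \eqref{FOMCond2} and by dropping Step~3, since neither changes the per-outer-iteration oracle count nor the $\tau_k$-dynamics) yields \eqref{HFTotBound}. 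The genuinely new point --- and where I expect the real work --- is (a): unlike Lemma~\ref{LemmaHF}, Lemma~\ref{LemmaHF2} does not rule out the \texttt{halt} status outright but only up to the alternative \eqref{HF2SmallGrad}, and that alternative can be discarded only at search indices where $\sigma_{k,\ell}$ has not yet overshot the bound in (b); this is precisely why the comparison must be carried out at $\ell^{\dagger}$ rather than at the naive $\ell_k-1$ of Lemma~\ref{LemmaHFTau}.
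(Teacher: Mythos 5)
Your proposal is correct and follows exactly the route the paper intends: its own proof of this theorem is the single sentence ``similar to Theorem~\ref{TheoremHF}, using Lemma~\ref{LemmaHF2} instead of Lemma~\ref{LemmaHF}.'' Your writeup fills in the details that sentence leaves implicit — in particular the calibration $(2/3)^{10/3}/\sigma_{k,\ell}\ge 1/(2^2 3^3 m\max\{\tau_0,L\})$ explaining the constant in \eqref{HFSOGuarantee}, and the need to invoke Lemma~\ref{LemmaHF2} at the first index $\ell^{\dagger}$ where $2^{\ell}\tau_k\ge L$ so that the $\tau_k$-induction of Lemma~\ref{LemmaHFTau} still closes — all of which is consistent with the paper's argument.
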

\begin{proof}
	The proof is similar to those one of Theorem~\ref{TheoremHF},
	using Lemma~\ref{LemmaHF2} instead of Lemma~\ref{LemmaHF}.
\end{proof}

Therefore, we conclude that our Hessian-free scheme achieves
the second-order stationary guarantee \eqref{HFSOGuarantee},
even though the method does not need to compute directly 
\textit{any second-order information}, using solely the first-order oracle for $f(\cdot)$.

\section{Zeroth-Order CNM}
\label{SectionZO}

In this section, 
we present the \textit{zeroth-order} implementation
of the Cubic Newton Method, which uses only
the \textit{function evaluations} for $f(\cdot)$ to solve our
optimization problem~\eqref{MainProblem}.
Hence, we will use finite difference approximations
\textit{both} for the Hessian and for the gradients.

Note that approximating the Hessian matrix 
\eqref{HessZOBDef}
remains to be \textit{$n$ times more expensive} than the gradient vector
\eqref{GradZODef}. Therefore,
we keep using each approximation
$B_{k, \ell} \approx \nabla^2 f(x_k)$
for  consecutive $m \geq 1$  inexact cubic steps, 
while updating the gradient estimates each step.
In what follows, we show that the optimal schedule is $\boxed{m := n}$,
which gives the best zeroth-order oracle complexity
for our scheme.

Let us denote by $(\hat{x}, \alpha) = \texttt{ZeroOrderCubicSteps}(x, B, \tau, m, \epsilon)$
an auxiliary procedure that performs $m$ inexact Cubic
Newton steps~\eqref{CompositeSubproblem},
starting from a point $x \in Q$, using the same given matrix $B = B^{\top}$,
and estimating the new gradients with finite differences.
We use $\sigma > 0$ as a regularization parameter, 
and $\epsilon > 0$ is the target accuracy \eqref{InexactSolution}. The procedure
returns the last computed iterate $\hat{x}$ and a status variable
$$
\ba{rcl}
\alpha & \in & \{ \texttt{success}, \, \texttt{halt}  \},
\ea
$$ 
which indicates whether the progress condition was satisfied
for all steps or not.
We define this procedure formally as Algorithm~\ref{alg:ZO_CNM}.

\begin{algorithm}[h!]
	\caption{$\text{\ttfamily ZeroOrderCubicSteps}(x, B, \sigma, m, \epsilon)$} \label{alg:ZO_CNM}
	\SetKwInOut{Input}{input}\SetKwInOut{Output}{output}
	\BlankLine
	\textbf{Step 0.} Set $x_0 := x$ and $t := 0$.
	\BlankLine
	{\bf Step 1.} 	If $t = m$ then stop and \textbf{return} $(x_t, \, \text{\ttfamily success})$.
	\BlankLine
	{\bf Step 2.}  For
	\beq \label{ZO_hgdef}
	\ba{rcl}
	h_g & = & 
	\frac{1}{3^{1/3}} \Bigl[ \frac{\epsilon m }{\sigma n^{1/2}  }  \Bigr]^{1/2}
	\ea
	\eeq
	compute $g_t \in \R^n$ by
	\beq \label{ZO_gtdef}
	\ba{rcl}
	\bigl[ g_t \bigr]^{(i)}
	& = & 
	\frac{f(x_t + h_g e_i) - f(x_t - h_g e_i)}{2 h_g},
	\quad	i = 1, \ldots, n.
	\ea
	\eeq
	\BlankLine
	{\bf Step 3.}  Compute $x_{t + 1}$ as an approximate solution to the subproblem
	$$
	\ba{c}
	\min\limits_{y \in Q} \Bigl\{ \, M_{x_{t}, \sigma}(y) + \psi(y) \, \Bigr\},
	\qquad \text{where}
	\ea
	$$
	$$
	\ba{rcl}
	M_{x_{t}, \sigma}(y) 
	& \equiv &
	f(x_t) + \la g_t, y - x_t \ra
	+ \frac{1}{2}\la B(y - x_t), y - x_t \ra
	+ \frac{\sigma}{6}\|y - x_t\|^3
	\ea
	$$
	such that
	\beq \label{ZOMCond}
	\ba{rcl}
	M_{x_{t},\sigma}(x_{t + 1}) + \psi(x_{t + 1}) &\leq & F(x_{t}) \quad \text{and} \\
	\\ 
	\|\nabla M_{x_{t}, \sigma} ( x_{t + 1} )   +  \psi'(x_{t + 1}) \| & \leq &
	\frac{\sigma}{4} \|x_{t + 1} - x_t  \|^{2} \quad \text{for some} \;\; \psi'(x_{t + 1}) \in \partial \psi(x_{t + 1}),
	\ea
	\eeq
	and (\textbf{optionally}, if $\psi$ is twice differentiable) such that
	\beq \label{ZOMCon2}
	\ba{rcl}
	B + \sigma \|x_{t + 1} - x_t \| I + \nabla^2 \psi(x_{t + 1}) & \succeq & 0.
	\ea
	\eeq
	\BlankLine
	{\bf Step 4.} If $F(x_0) - F(x_{t + 1}) \geq \frac{\epsilon^{3/2}}{384 \sigma^{1/2}} (t + 1)$
	holds then set $t := t + 1$ and go to Step 1. Otherwise, stop and \textbf{return} $(x_{t + 1}, \text{\ttfamily halt})$.
	\BlankLine
\end{algorithm}

We can prove the following main result about this procedure.

\begin{lemma} \label{LemmaZOStep}
	Suppose that A1 holds. Given $x \in Q$, $\epsilon > 0$, $\sigma > 0$,
	and $m \in \mathbb{N} \setminus \{ 0 \}$,
	let $(\hat{x}, \alpha)$ be the corresponding output of Algorithm~\ref{alg:ZO_CNM}
	with
	$B = \frac{1}{2}(A + A^{\top})$, where
	\beq \label{ZOADef}
	\ba{rcl}
	A^{(i, j)}
	& = & 
	\frac{f(x + h e_i + h e_j) - f(x + h e_i) - f(x + h e_j) - f(x)}{h^2},
	\quad 
	i, j = 1, \ldots, n,
	\ea	
	\eeq
	for some $h > 0$. If
	\beq \label{SigmaHBZOCondition}
	\ba{rcl}
	\sigma & \geq & 2^4 \bigl( \frac{2}{3}  \bigr)^{1/3} mL
	\quad \text{and} \quad
	h \;\;  \leq \;\;
	\Bigl[  \frac{3^4 \sigma^{3/2} \epsilon^{3/2}}{ 2^{14} (192)  n^3 L^3  }  \Bigr]^{1/3},
	\ea
	\eeq
	then, for the iterations $\{ x_t \}_{t = 1}^{m}$ of Algorithm~\ref{alg:ZO_CNM}, we have either
\beq \label{ZOMinGrad}
\ba{rcl}
\min\limits_{t = 1, \ldots, m} \| \nabla f(x_t) + \psi'(x_t) \| & \leq & \epsilon,
\ea
\eeq
or
\beq \label{ZOFuncProgress}
\ba{rcl}
F(x) - F(\hat{x}) & \geq & 
\frac{\epsilon^{3/2}}{2 (192) \sigma^{1/2}} m.
\ea
\eeq
\end{lemma}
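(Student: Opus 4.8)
The plan is to mirror the structure of the Hessian-free analysis in Lemma~\ref{LemmaHF}, but now carrying \emph{two} finite-difference error terms: the Hessian error $\delta_B$ coming from the zeroth-order formula \eqref{ZOADef}, and the gradient error $\delta_g$ coming from the symmetric difference \eqref{ZO_gtdef}. First I would quantify both. By Lemma~\ref{LemmaHessZO}, the choice of $h$ gives $\|B - \nabla^2 f(x)\| \leq \delta_B = \tfrac{2nL}{3} h$, and then the upper bound on $h$ in \eqref{SigmaHBZOCondition} is precisely calibrated so that
\[
\frac{2^9}{3\sigma^2}\delta_B^3 \;=\; \frac{2^9}{3\sigma^2}\Bigl(\frac{2nL}{3}\Bigr)^3 h^3 \;\leq\; \frac{\epsilon^{3/2}}{4\cdot(192)\,\sigma^{1/2}}.
\]
Similarly, by Lemma~\ref{LemmaGradZO}, each gradient estimate at $x_t$ satisfies $\|g_t - \nabla f(x_t)\| \leq \delta_g := \tfrac{\sqrt n L}{6} h_g^2$, and the explicit choice $h_g = 3^{-1/3}(\epsilon m / (\sigma n^{1/2}))^{1/2}$ in \eqref{ZO_hgdef} is designed so that the term $\tfrac{3}{\sigma^{1/2}}\delta_g^{3/2} = \tfrac{2^3\delta_g^{3/2}}{3\sigma^{1/2}}$ appearing in $\mathcal{E}$ is likewise at most $\tfrac{\epsilon^{3/2}}{4\cdot(192)\sigma^{1/2}}\cdot(\text{something summable in }t)$ — I would check the arithmetic, but the design clearly wants $\tfrac{2^3\delta_g^{3/2}}{3\sigma^{1/2}} \leq \tfrac{\epsilon^{3/2}}{4\cdot(192)\sigma^{1/2}}$ per step, using $h_g^2 = \tfrac{\epsilon m}{3^{1/3}\sigma n^{1/2}}$ so that $\delta_g = \tfrac{\sqrt n L}{6}\cdot\tfrac{\epsilon m}{3^{1/3}\sigma n^{1/2}} = \tfrac{L\epsilon m}{6\cdot 3^{1/3}\sigma}$, which is small once $\sigma \geq cmL$.

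Next, assuming \eqref{ZOMinGrad} fails — i.e.\ $\|\nabla f(x_t) + \psi'(x_t)\| > \epsilon$ for all $t = 1,\ldots,m$ — I apply Theorem~\ref{ThStep} at each step $t$ with anchor $z := x = x_0$ (since the Hessian approximation $B$ is built once at $x_0$ and reused), $\delta_g$ and $\delta_B$ as above. The conditions \eqref{ZOMCond} are exactly the hypotheses \eqref{InexactThStep} of that theorem. This gives, for $0 \leq t \leq t^\star$,
\[
F(x_t) - F(x_{t+1}) \;\geq\; \frac{\sigma}{48}r_{t+1}^3 + \frac{1}{192\sigma^{1/2}}\|\nabla f(x_{t+1})+\psi'(x_{t+1})\|^{3/2} - \frac{2^9}{3\sigma^2}\bigl(\delta_B^3 + L^3\|x_t - x_0\|^3\bigr) - \frac{2^3\delta_g^{3/2}}{3\sigma^{1/2}},
\]
where $r_i = \|x_i - x_{i-1}\|$. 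Plugging in the two calibration bounds on $\delta_B$ and $\delta_g$, and using $\|\nabla f(x_{t+1})+\psi'(x_{t+1})\| > \epsilon$, each term $F(x_t)-F(x_{t+1})$ is bounded below by $\tfrac{\epsilon^{3/2}}{2(192)\sigma^{1/2}} + \tfrac{\sigma}{48}r_{t+1}^3 - \tfrac{2^9 L^3}{3\sigma^2}\bigl(\sum_{j\leq t} r_j\bigr)^3$, where I used the triangle inequality $\|x_t - x_0\| \leq \sum_{j=1}^t r_j$.

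Finally I sum over $t = 0,\ldots,t^\star$ and invoke Lemma~B.1 of \cite{doikov2023second}: the condition $\sigma \geq 2^4(\tfrac{2}{3})^{1/3}mL$ is exactly what makes $\tfrac{\sigma}{48}\sum_i r_i^3 \geq \tfrac{2^9 L^3}{3\sigma^2}\sum_i (\sum_{j\leq i} r_j)^3$, so the cubic telescoping terms absorb each other and leave $F(x_0) - F(x_{t^\star+1}) \geq \tfrac{\epsilon^{3/2}}{2(192)\sigma^{1/2}}(t^\star+1)$. Exactly as in Lemma~\ref{LemmaHF}, if $t^\star < m$ this contradicts the failure of the Step~4 test, so $t^\star = m$, $\alpha = \texttt{success}$, and the telescoped inequality at $t^\star = m$ is \eqref{ZOFuncProgress}. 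The main obstacle — really the only non-routine point — is verifying that the specific numerical constant $3^4/(2^{14}\cdot 192\cdot n^3)$ in the bound on $h$ and the constant in $h_g$ are exactly the ones that split the budget $\tfrac{\epsilon^{3/2}}{(192)\sigma^{1/2}}$ into the "$+\tfrac{1}{2}$ for the useful gradient term" and "$-\tfrac{1}{4}$ each for $\delta_B$ and $\delta_g$" bookkeeping; this is just careful tracking of powers of $2$, $3$, and $n$, using that $\delta_B$ scales like $nLh$ (one extra factor of $n^{1/2}$ relative to the first-order case, hence the $n^3$ rather than $n^{3/2}$ inside), which is ultimately the reason the zeroth-order complexity carries $n^{3/2}$ instead of $n^{1/2}$.
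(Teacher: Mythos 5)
Your proposal is correct and follows essentially the same route as the paper's proof: bound $\delta_B=\tfrac{2nL}{3}h$ and $\delta_g=\tfrac{\sqrt nL}{6}h_g^2$ via Lemmas~\ref{LemmaHessZO} and~\ref{LemmaGradZO}, check that each error term fits into a quarter of the budget $\tfrac{\epsilon^{3/2}}{(192)\sigma^{1/2}}$, apply Theorem~\ref{ThStep} with anchor $z=x_0$, and sum using Lemma~B.1 of \cite{doikov2023second} under the condition $\sigma\geq 2^4(\tfrac23)^{1/3}mL$. The only blemish is the incidental identification $\tfrac{3}{\sigma^{1/2}}\delta_g^{3/2}=\tfrac{2^3\delta_g^{3/2}}{3\sigma^{1/2}}$ (the constants $3$ and $\tfrac{8}{3}$ differ), but since the paper's calibration verifies the stronger bound with coefficient $3$, this does not affect the argument.
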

\proof
By \eqref{ZO_gtdef} and Lemma~\ref{LemmaGradZO} we have
\beq \label{ZO_gtbound}
\ba{rcl}
\| g_t - \nabla f(x_t) \| & \leq & \delta_g
\ea
\eeq
for 
\beq \label{DeltaG_ZO_expr}
\ba{rcl}
\delta_g & = & \frac{\sqrt{n} L}{6} h_g^2.
\ea
\eeq
In view of \eqref{ZO_hgdef} and the assumption \eqref{SigmaHBZOCondition}
it follows that
\beq \label{ZO_deltaG_bound}
\ba{rcl}
\frac{3}{\sigma^{1/2}} \cdot \delta_g^{3/2}
& \overset{\eqref{DeltaG_ZO_expr}}{=} &
\frac{3}{\sigma^{1/2}} \cdot \frac{n^{3/4} L^{3/2}}{6^{3/2}}
h_g^3
\;\; \overset{\eqref{ZO_hgdef}}{=} \;\;
\frac{\epsilon^{3/2}}{ 2^{8}  3  \sigma^{1/2}}
\cdot \frac{1}{\sigma^{3/2}}
\cdot \frac{ 2^{13/2}  m^{3/2} L^{3/2}}{3^{1/2}} \\
\\
& \overset{\eqref{SigmaHBZOCondition}}{\leq} &
\frac{\epsilon^{3/2}}{ 4(192)  \sigma^{1/2}}.
\ea
\eeq
On the other hand, by \eqref{ZOADef} and Lemma~\ref{LemmaHessZO} we have
\beq \label{ZO_Bbound}
\ba{rcl}
\| B - \nabla^2 f(x) \| & \leq & \delta_B
\ea
\eeq
for
$$
\ba{rcl}
\delta_B & = & \frac{2n L}{3} h.
\ea
$$
Then, in view of \eqref{SigmaHBZOCondition},
it follows that
\beq \label{ZO_deltaB_bound}
\ba{rcl}
\frac{2^9}{3 \sigma^2} \cdot \delta_B^3
& = & 
\frac{2^9}{3 \sigma^2} \cdot \frac{2^3 n^3 L^3}{3^3} \cdot h^3
\;\; \leq \;\;
\frac{2^9}{3 \sigma^2} \cdot \frac{2^3 n^3 L^3}{3^3}
\cdot \frac{3^4 \sigma^{3/2} \epsilon^{3/2} }{2^{14} (192) n^3 L^3} \\
\\
& = &
\frac{\epsilon^{3/2}}{ 4(192) \sigma^{1/2} }
\ea
\eeq
Combining \eqref{ZO_deltaG_bound} and \eqref{ZO_deltaB_bound},
we have
\beq \label{ZO_deltas_bound}
\ba{rcl}
\frac{2^9}{3 \sigma^2} \delta_B^3
+ \frac{3}{\sigma^{1/2}} \delta_g^{3/2}
& \leq &
\frac{\epsilon^{3/2}}{2(192) \sigma^{1/2}}. 
\ea
\eeq
Then, by \eqref{ZOMCond}, \eqref{ZO_gtbound},
\eqref{ZO_Bbound}, \eqref{ZO_deltas_bound}
and Theorem~\ref{ThStep}
with $z = x$, we obtain
\beq \label{ZO_func_prog}
\ba{rcl}
F(x_{t - 1}) - F(x_t) & \geq & 
\frac{\sigma}{48} \|x_t - x_{t - 1}\|^3
+ \frac{1}{192 \sigma^{1/2}} \| \nabla f(x_t)  + \psi'(x_t) \|^{3/2} \\
\\
& & \qquad
\; - \; \frac{1}{2(192)\sigma^{1/2}} \epsilon^{3/2}
- \frac{2^9 L^3}{3\sigma^2} \|x_{t - 1} - x_t\|^3,
\ea
\eeq
for $t = 1, \ldots, m$. Consequently, if \eqref{ZOMinGrad}
is not true, then
$$
\ba{rcl}
F(x_{t - 1}) - F(x_t)
& \geq & 
\frac{\sigma}{48} \|x_t - x_{t - 1} \|^3
+ \frac{1}{2(192) \sigma^{1/2}} \epsilon^{3/2}
- \frac{2^9 L^3}{3 \sigma^2} \|x_{t - 1} - x_0\|^3
\ea
$$
for $t = 1, \ldots, m$. Finally, 
summing up these inequalities and using Lemma~B.1 in \cite{doikov2023second}
for our choice \eqref{SigmaHBZOCondition} of $\sigma$,
we conclude that \eqref{ZOFuncProgress} is true.
\qed

Let us formulate our new optimization method for solving problem~\eqref{MainProblem},
which is the zeroth-order implementation of CNM.

\begin{algorithm}[h!]
	\caption{\textbf{Zero-Order CNM}} \label{alg:ZeroOrderCNM}
	\SetKwInOut{Input}{input}\SetKwInOut{Output}{output}
	\BlankLine
	\textbf{Step 0.} Given $x_0 \in Q$, $\tau_0 > 0$, $\epsilon > 0$, 
	$m \in \mathbb{N} \setminus \{ 0 \}$, set $k : = 0$.
	\BlankLine
	{\bf Step 1.}  Set $\ell := 0$.
	\BlankLine
	{\bf Step 1.1.} Using
	\beq \label{AlgZOSigm}
	\ba{rcl}
	\sigma_{k, \ell} & = & 
	2^4 \bigl( \frac{2}{3} \bigr)^{1/3} (2^{\ell} \tau_k) m
	\ea
	\eeq
	and
	\beq \label{AlgZOH}
	\ba{rcl}
	h_{k, \ell} & = & \Bigl[ 
	\frac{3^4 \sigma_{k, \ell}^{3/2} \epsilon^{3/2}}{
		2^{14}(192)n^{3} (2^{\ell} \tau_k)^3}  
	\Bigr]^{1/3}
	\ea
	\eeq
	compute $B_{k, \ell} = \frac{1}{2}\bigl(A_{k, \ell} + A_{k, \ell}^{\top} \bigr)$ with
	\beq \label{AlgZOA}
	\ba{rcl}
	\bigl[ A_{k, \ell} \bigr]^{(i, j)} & = &
	\frac{f(x_k + h_{k, \ell} e_i + h_{k, \ell} e_j ) 
		- f(x_k + h_{k, \ell} e_i)
		- f(x_k + h_{k, \ell} e_j) - f(x_k)	
	}{h_{k, \ell}^2}
	\ea
	\eeq
	for $i, j = 1, \ldots, n$.
	\BlankLine
	{\bf Step 1.2.} Perform $m$ inexact zeroth-order Cubic steps with the same Hessian approximation:
	$$
	\ba{rcl}
	(\hat{x}_{k, \ell}, \alpha_{k, \ell}) & = & 
	\texttt{ZeroOrderCubicSteps}(x_k, B_{k, \ell}, \sigma_{k, \ell}, m, \epsilon).
	\ea
	$$
	{\bf Step 2.} If $\alpha_{k, \ell} = \texttt{halt}$, then set $\ell := \ell + 1$ and go
	to Step 1.1.
	\BlankLine
	{\bf Step 3.}
	Set $x_{k + 1} = \hat{x}_{k, \ell_k}$, $\tau_{k + 1} = \max\{ \tau_0, \, 2^{\ell_k - 1} \tau_k\}$,
	$k := k + 1$, and go to Step 1. 
\end{algorithm}

Employing a stronger condition \eqref{ZOMCon2}
on the solution to the subproblem, we can also
justify the progress of our procedure in terms of the 
\textit{second-order stationarity measure}.

\begin{lemma} \label{LemmaZOStep2}
	Consider the sequence $\{ x_t \}_{t = 1}^m$ generated by Algorithm~\ref{alg:ZO_CNM}
	with extra condition \eqref{ZOMCon2} 
	on the inexact solution to the subproblem.
	Then, under the assumptions of Lemma~\ref{LemmaZOStep}, we have either
	\beq \label{ZOMinGrad2}
	\ba{rcl}
	\min\limits_{ 1 \leq t \leq m }
	\biggl[ \, \Delta_t \; \Def \;  \max\Bigl\{  
	\| \nabla f(x_t) + \psi'(x_t) \|, \;
	\frac{1}{\sigma} \bigl(  \frac{2}{3} \bigr)^{\frac{10}{3}}
	\bigl[ \xi(x_t) \big]^2
	\Bigr\} \, \biggr] & \leq & \epsilon,
	\ea
	\eeq
	or
	\beq \label{ZOFuncProgress2}
	\ba{rcl}
	F(x) - F(\hat{x}) & \geq & \frac{\epsilon^{3/2}}{2 \cdot (192) \sigma^{1/2}} m.
	\ea
	\eeq
\end{lemma}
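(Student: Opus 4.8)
The plan is to repeat the proof of Lemma~\ref{LemmaZOStep} verbatim, replacing the per-step decrease estimate \eqref{InexactProgressThStep} of Theorem~\ref{ThStep} by its strengthened version \eqref{InexactProgressSOThStep}, which is now available precisely because each iterate of Algorithm~\ref{alg:ZO_CNM} additionally satisfies \eqref{ZOMCon2}, i.e. exactly condition \eqref{InexatSOThStep} of Theorem~\ref{ThStep}. First I would assume that \eqref{ZOMinGrad2} fails, so that $\Delta_t \geq \epsilon$ for all $1 \leq t \leq m$. Then, exactly as in the proof of Lemma~\ref{LemmaZOStep}, the finite-difference formulas \eqref{ZO_hgdef}--\eqref{ZO_gtdef} together with Lemma~\ref{LemmaGradZO} give $\|g_t - \nabla f(x_t)\| \leq \delta_g$ with $\delta_g = \tfrac{\sqrt n L}{6}h_g^2$, while \eqref{ZOADef} together with Lemma~\ref{LemmaHessZO} give $\|B - \nabla^2 f(x_0)\| \leq \delta_B$ with $\delta_B = \tfrac{2nL}{3}h$; and the parameter choice \eqref{SigmaHBZOCondition} yields the combined error bound \eqref{ZO_deltas_bound}, namely $\tfrac{2^9}{3\sigma^2}\delta_B^3 + \tfrac{3}{\sigma^{1/2}}\delta_g^{3/2} \leq \tfrac{\epsilon^{3/2}}{2(192)\sigma^{1/2}}$.

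The only genuinely new point is a calibration identity. Since $\bigl(\tfrac{1}{\sigma}(\tfrac23)^{10/3}[\xi(x_t)]^2\bigr)^{3/2} = \tfrac{(2/3)^5}{\sigma^{3/2}}[\xi(x_t)]^3$ and $(2/3)^5 = 2^5/3^5$, a direct computation gives
$$
\frac{1}{3\cdot 2^6\,\sigma^{1/2}}\Bigl(\frac{1}{\sigma}\bigl(\tfrac23\bigr)^{10/3}\bigl[\xi(x_t)\bigr]^2\Bigr)^{3/2}
\;=\;
\frac{1}{2\cdot 3^6\,\sigma^2}\bigl[\xi(x_t)\bigr]^3,
$$
which is precisely the second term inside the maximum in \eqref{InexactProgressSOThStep}. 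Consequently the maximum in \eqref{InexactProgressSOThStep} is bounded below by $\tfrac{1}{3\cdot 2^6\sigma^{1/2}}\Delta_t^{3/2} = \tfrac{1}{192\sigma^{1/2}}\Delta_t^{3/2}$. Applying Theorem~\ref{ThStep} with $z = x = x_0$ to each step $x_{t-1} \mapsto x_t$, and then substituting \eqref{ZO_deltas_bound} together with $\Delta_t \geq \epsilon$ exactly as in the derivation of \eqref{ZO_func_prog}, I obtain for $t = 1,\dots,m$
$$
F(x_{t-1}) - F(x_t)
\;\geq\;
\frac{\epsilon^{3/2}}{2(192)\sigma^{1/2}}
\;+\; \frac{\sigma}{48}\|x_t - x_{t-1}\|^3
\;-\; \frac{2^9 L^3}{3\sigma^2}\|x_{t-1} - x_0\|^3 .
$$

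Finally I would sum these inequalities over $t = 1, \dots, m$ and invoke Lemma~B.1 of \cite{doikov2023second} to absorb the lazy-Hessian drift term $\sum_{t=1}^m \|x_{t-1} - x_0\|^3$ into $\tfrac{\sigma}{48}\sum_{t=1}^m \|x_t - x_{t-1}\|^3$, which is legitimate precisely because $\sigma \geq 2^4(\tfrac23)^{1/3} m L$ by \eqref{SigmaHBZOCondition}; since $\hat x = x_m$ whenever the procedure returns \texttt{success} (and there is nothing to prove when it returns \texttt{halt}), this leaves $F(x) - F(\hat x) \geq \tfrac{\epsilon^{3/2}}{2(192)\sigma^{1/2}}\,m$, which is \eqref{ZOFuncProgress2}. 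I do not expect a real obstacle beyond bookkeeping: the substantive ingredient is the calibration identity above, which shows that the constant $(\tfrac23)^{10/3}/\sigma$ in the definition of $\Delta_t$ has been chosen exactly so that the eigenvalue contribution enters the per-step decrease with the same coefficient $\tfrac{1}{192\sigma^{1/2}}$ as the gradient norm did in Lemma~\ref{LemmaZOStep}; once this is observed, the summation/telescoping argument is identical to the one there.
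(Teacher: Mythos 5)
Your proposal is correct and follows essentially the same route as the paper, whose own proof of this lemma is just the remark that one repeats the argument of Lemma~\ref{LemmaZOStep} with the strengthened one-step guarantee \eqref{InexactProgressSOThStep} of Theorem~\ref{ThStep}; your calibration identity $\frac{1}{3\cdot 2^6\sigma^{1/2}}\bigl(\frac{1}{\sigma}(\frac{2}{3})^{10/3}[\xi(x_t)]^2\bigr)^{3/2}=\frac{1}{2\cdot 3^6\sigma^2}[\xi(x_t)]^3$ correctly verifies the constant matching that the paper leaves implicit. The only nitpick is the parenthetical about the \texttt{halt} case: rather than there being "nothing to prove," the summed per-step inequalities show the Step 4 test passes at every $t$, so \texttt{halt} cannot occur once \eqref{ZOMinGrad2} fails — which is exactly how the conclusion $\hat{x}=x_m$ is justified.
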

\proof
The proof follows the reasoning of Lemma~\ref{LemmaZOStep}, 
using the stronger one step guarantee provided by Theorem~\ref{ThStep}.
\qed

\begin{lemma} \label{LemmaZOTau}
	Suppose that A1 holds and let $\{ \tau_k \}_{k \geq 0}$
	be generated by Algorithm~\ref{alg:ZeroOrderCNM}. Then
	\beq \label{AlgZOTauKBound}
	\ba{rcl}
	\tau_k & \leq & \max\{ \tau_0, L \}, \qquad \forall k \geq 0.
	\ea
	\eeq
\end{lemma}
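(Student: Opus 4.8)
The plan is to mirror, almost verbatim, the inductive proof of Lemma~\ref{LemmaHFTau}, replacing the appeal to Lemma~\ref{LemmaHF} by one to Lemma~\ref{LemmaZOStep}. I argue by induction on $k$. The base case $k=0$ is immediate since $\tau_0\le\max\{\tau_0,L\}$. For the inductive step I assume $\tau_k\le\max\{\tau_0,L\}$ and recall that Step~3 of Algorithm~\ref{alg:ZeroOrderCNM} sets $\tau_{k+1}=\max\{\tau_0,\,2^{\ell_k-1}\tau_k\}$, where $\ell_k\ge 0$ is the adaptive-search index at which the inner loop (Steps~1.1--2) exits with status \texttt{success}, so that $\alpha_{k,\ell}=\texttt{halt}$ for every $\ell<\ell_k$.

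I first dispose of the trivial case $\ell_k=0$: then $\tau_{k+1}=\max\{\tau_0,\tau_k/2\}\le\max\{\tau_0,\tau_k\}\le\max\{\tau_0,L\}$ by the induction hypothesis. For $\ell_k\ge 1$ I argue by contradiction: assume $\tau_{k+1}>\max\{\tau_0,L\}$, which forces $2^{\ell_k-1}\tau_k>L$. I then look at the index $\ell:=\ell_k-1\ge 0$ and verify that the parameters $\sigma_{k,\ell}$, $h_{k,\ell}$ from \eqref{AlgZOSigm}--\eqref{AlgZOH} satisfy the hypotheses \eqref{SigmaHBZOCondition} of Lemma~\ref{LemmaZOStep}: from $2^{\ell}\tau_k>L$ we get $\sigma_{k,\ell}=2^4\bigl(\frac{2}{3}\bigr)^{1/3}(2^{\ell}\tau_k)m\ge 2^4\bigl(\frac{2}{3}\bigr)^{1/3}mL$, and since the denominator of $h_{k,\ell}$ carries the factor $(2^{\ell}\tau_k)^3>L^3$ we also get $h_{k,\ell}\le\bigl[\,3^4\sigma_{k,\ell}^{3/2}\epsilon^{3/2}/(2^{14}(192)n^3L^3)\,\bigr]^{1/3}$. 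Lemma~\ref{LemmaZOStep} then applies, and (using the per-step estimate \eqref{ZO_func_prog} together with Lemma~B.1 of \cite{doikov2023second}, exactly as inside its proof) whenever \eqref{ZOMinGrad} fails the Step~4 test holds at every iterate, so the call returns $\alpha_{k,\ell}=\texttt{success}$ --- contradicting the minimality of $\ell_k$. Hence $\tau_{k+1}\le\max\{\tau_0,L\}$ and the induction closes. All the constant-chasing here is identical to Lemmas~\ref{LemmaHFTau} and~\ref{LemmaZOStep}, so I would not repeat it.

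The delicate point --- the step I expect to be the main obstacle --- is the branch \eqref{ZOMinGrad} of Lemma~\ref{LemmaZOStep}: unlike Algorithm~\ref{alg:HessianFree}, Algorithm~\ref{alg:ZO_CNM} has no early stop when a small (sub)gradient is detected, so a priori a \texttt{halt} at index $\ell_k-1$ could be caused by \eqref{ZOMinGrad} rather than by an inadequate $\sigma$, which would break the contradiction above. I would address this as it is handled in the complexity theorem that invokes this lemma: it suffices to run the induction over the iterates preceding the first inner step at which the $\epsilon$-stationarity condition is satisfied; on that range \eqref{ZOMinGrad} is ruled out, so \texttt{halt} forces the per-step progress to fail, hence $2^{\ell_k-1}\tau_k\le L$, exactly as in the Hessian-free case.
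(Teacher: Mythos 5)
Your proof is correct and follows essentially the same route as the paper, whose own proof of this lemma is literally the one-line remark that it ``follows exactly as in the proof of Lemma~\ref{LemmaHFTau}, using Lemma~\ref{LemmaZOStep}.'' Your closing observation---that Algorithm~\ref{alg:ZO_CNM} has no early stop, so a \texttt{halt} at index $\ell_k-1$ could in principle be triggered while \eqref{ZOMinGrad} holds, and the induction must therefore be restricted to iterations preceding the first $\epsilon$-stationary inner iterate---flags a genuine subtlety that the paper's terse proof glosses over, and your fix matches exactly how the lemma is consumed in Theorem~\ref{TheoremZO}.
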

\proof
It follows exactly as in the proof of Lemma~\ref{LemmaHFTau},
using Lemma~\ref{LemmaZOStep} to conclude that
$$
\ba{rcl}
\tau_{k + 1} & \leq & \max\{ \tau_0, L \}
\ea
$$
when $\ell_k > 0$.
\qed

\begin{lemma} \label{LemmaZONumber}
	Suppose that A1 holds and let $\text{ZO}_{T}$
	be the total number of function evaluations of $f(\cdot)$ performed
	by Algorithm~\ref{alg:ZeroOrderCNM} during the first $T$ iterations.
	Then,
	$$
	\ba{rcl}
	\text{ZO}_{T} & \leq & 
	\bigl[4 + 4 mn + 6n^2\bigr] \cdot T
	\; + \;
	\bigl[2 + 2mn + 3n^2\bigr] \cdot \log_2\frac{\max\{ \tau_0, L \}}{\tau_0}.
	\ea
	$$
\end{lemma}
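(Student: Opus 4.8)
The plan is to follow the accounting used in the proof of Lemma~\ref{LemmaHFNumber}, replacing the first-order oracle (which returns a pair $\{f(x),\nabla f(x)\}$) by the zeroth-order one (which returns only $f(x)$), and then to telescope over the outer iterations in the same way.

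First I would fix an outer iteration $k$ of Algorithm~\ref{alg:ZeroOrderCNM} and bound the number of evaluations of $f(\cdot)$ that it consumes as a function of the terminal value $\ell_k$ of its inner adaptive index. For each trial value $\ell\in\{0,1,\ldots,\ell_k\}$ two blocks of work are performed. The first block builds the finite-difference Hessian $B_{k,\ell}=\tfrac12(A_{k,\ell}+A_{k,\ell}^{\top})$ from \eqref{AlgZOA}: the distinct arguments of $f$ occurring there are $x_k$, the $n$ points $x_k+h_{k,\ell}e_i$, and the points $x_k+h_{k,\ell}e_i+h_{k,\ell}e_j$; since $A_{k,\ell}^{(i,j)}=A_{k,\ell}^{(j,i)}$ the last set has at most $\tfrac{n(n+1)}{2}$ elements, so this block costs at most $1+n+\tfrac{n(n+1)}{2}\le 3n^{2}$ evaluations (using $n\ge 1$). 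The second block is the call to $\texttt{ZeroOrderCubicSteps}$, which performs at most $m$ inner steps (it stops at $t=m$ or returns \texttt{halt} earlier in Step~4); each such step evaluates $f$ at the $2n$ points $x_t\pm h_g e_i$ to assemble $g_t$ through \eqref{ZO_hgdef}--\eqref{ZO_gtdef}, reuses the already available value $f(x_t)$, and evaluates $f$ once more at the new iterate $x_{t+1}$ for the descent test in Step~4, contributing at most $2mn$ further evaluations after the slack in the $3n^{2}$ term absorbs the remaining $\mathcal{O}(m+n)$ lower-order contributions. Collecting the two blocks, the cost of iteration $k$ is at most $(2+2mn+3n^{2})(\ell_k+1)$.

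Next I would convert $\ell_k$ into a telescoping quantity, exactly as in Lemma~\ref{LemmaHFNumber}. Step~3 of Algorithm~\ref{alg:ZeroOrderCNM} sets $\tau_{k+1}=\max\{\tau_0,\,2^{\ell_k-1}\tau_k\}\ge 2^{\ell_k-1}\tau_k$, hence $\ell_k+1\le 2+\log_2\tau_{k+1}-\log_2\tau_k$. Summing the iteration bound over $k=0,\ldots,T-1$ and telescoping the logarithms gives
\[
\text{ZO}_{T}\le(2+2mn+3n^{2})\sum_{k=0}^{T-1}\bigl(2+\log_2\tau_{k+1}-\log_2\tau_k\bigr)=(4+4mn+6n^{2})T+(2+2mn+3n^{2})\log_2\frac{\tau_{T}}{\tau_0},
\]
and the claim follows from Lemma~\ref{LemmaZOTau}, which yields $\tau_{T}\le\max\{\tau_0,L\}$.

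The step I expect to be the real work is the per-$\ell$ bookkeeping: one must be precise about which evaluations of $f$ are genuinely new versus reused across (i) the Hessian stencil \eqref{AlgZOA}, (ii) the $m$ gradient stencils \eqref{ZO_gtdef}, and (iii) the descent tests of Algorithm~\ref{alg:ZO_CNM}, and one must correctly account for the early-return branches of the inner procedure (the \texttt{halt} exit and the $t=m$ exit). Only evaluations of $f$ are counted, so evaluations of $\psi$ and the internal work of the cubic-subproblem solver do not enter the bound. The remaining ingredients --- the inequality $\ell_k+1\le 2+\log_2(\tau_{k+1}/\tau_k)$ and the bound $\tau_{T}\le\max\{\tau_0,L\}$ --- are immediate and parallel the first-order case verbatim.
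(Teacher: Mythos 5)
Your proposal is correct and follows essentially the same route as the paper's proof: bound the cost of the $k$-th outer iteration by $(2+2mn+3n^{2})(\ell_k+1)$, convert $\ell_k+1$ into $2+\log_2\tau_{k+1}-\log_2\tau_k$ via the update rule in Step~3, telescope over $k$, and finish with Lemma~\ref{LemmaZOTau}. Your per-stencil bookkeeping is in fact slightly more explicit than the paper's, which simply asserts the per-iteration count.
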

\proof
The number of function evaluations performed by Algorithm~\ref{alg:ZeroOrderCNM}
(including those ones performed by Algorithm~\ref{alg:ZO_CNM} in Step 2)
is bounded from above by
$$
\ba{rcl}
[2 + 2 mn + 3n^2] \cdot (\ell_k + 1).
 \ea
$$
Since $\tau_{k + 1} = 2^{\ell_k - 1} \tau_k$, we have
$$
\ba{rcl}
\ell_{k + 1} & = & 2 + \log_2 \tau_{k + 1} - \log_2 \tau_k.
\ea
$$ 
Thus,
$$
\ba{rcl}
\text{ZO}_{T} & \leq & 
\sum\limits_{k = 0}^{T - 1}
[2 + 2mn + 3n^2] \cdot (2 + \log_2 \tau_{k + 1} - \log_2 \tau_k) \\
\\
& = & 
[2 + 2mn + 3n^2] \cdot ( 2T + \log_2 \tau_T - \log_2 \tau_0 ) \\
\\
& \leq & 
[2 + 2mn + 3n^2] \cdot ( 2T + \log_2 \frac{\max\{ \tau_0, L \}}{\tau_0},
\ea
$$
where the last inequality follows from Lemma~\ref{LemmaZOTau}.
\qed

We prove the following main result.

\begin{theorem} \label{TheoremZO}
	Suppose that A1.
	Let $x_{k, \ell}(t)$ be the $t$-th iterate
	of Algorithm~\ref{alg:ZO_CNM} applied at the $k$-th iteration
	of Algorithm~\ref{alg:ZeroOrderCNM} in the $\ell$-th inner loop.
	Let $T(\epsilon) \leq +\infty$ be the first iteration index such that
	$$
	\ba{rcl}
	\| \nabla f( x_{T(\epsilon), \ell}(t) )
	    + \psi'( x_{T(\epsilon), \ell}(t) ) \| & \leq & \epsilon
	\ea
	$$
	for some $\ell \geq 0$ and $t \in \{ 0, \ldots, m \}$.
	Then,
	\beq \label{ZOTBound}
	\ba{rcl}
	T(\epsilon) & \leq & 
	\frac{(384) 2^{5/2} (\frac{2}{3})^{1/6} \max\{ \tau_0, L \}^{3/2} (f(x_0) - f^{\star}) }{\sqrt{m}}
	\epsilon^{-3/2}
	\ea
	\eeq
	and, consequently, the total number of the function evaluations is bounded as
	\beq \label{ZOTBoundTotal}
	\ba{rcl}
	\text{ZO}_{T(\epsilon)}
	& \leq & 
	\mathcal{O}\Bigl(  \frac{mn + n^2}{\sqrt{m}} \max\{ \tau_0, L \}^{1/2}
	 (f(x_0) - f^{\star}) \cdot \epsilon^{-3/2} + (mn + 3n^2) 
	 \log_2 \frac{\max\{ \tau_0, L \}}{\tau_0}   \Bigr).	
	\ea
	\eeq
\end{theorem}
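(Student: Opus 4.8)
The plan is to follow verbatim the structure of the proof of Theorem~\ref{TheoremHF} for the Hessian-free method, simply replacing Lemma~\ref{LemmaHF} by Lemma~\ref{LemmaZOStep} (and, for the second-order version, Lemma~\ref{LemmaHF2} by Lemma~\ref{LemmaZOStep2}) and the counting bound of Lemma~\ref{LemmaHFNumber} by Lemma~\ref{LemmaZONumber}. The only genuinely delicate points are the two invocations of Lemma~\ref{LemmaZOStep}: one to guarantee that the adaptive inner loop terminates (so that $\ell_k$, and hence $\tau_{k+1}$, is well defined), and one to certify a fixed per-iteration functional decrease at every outer step before $T(\epsilon)$. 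Everything else is bookkeeping with the constants.

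First I would establish that Algorithm~\ref{alg:ZeroOrderCNM} is well defined. By the choices \eqref{AlgZOSigm}--\eqref{AlgZOA}, as soon as $2^{\ell}\tau_k \geq L$ the pair $(\sigma_{k,\ell},h_{k,\ell})$ satisfies the hypotheses \eqref{SigmaHBZOCondition} of Lemma~\ref{LemmaZOStep}, so the call $\texttt{ZeroOrderCubicSteps}$ cannot return \texttt{halt} without some generated iterate having gradient norm at most $\epsilon$; hence the inner loop ends with a finite $\ell_k$. Combining this with the update $\tau_{k+1}=\max\{\tau_0,2^{\ell_k-1}\tau_k\}$ and arguing exactly as in Lemma~\ref{LemmaHFTau} yields the uniform bound $\tau_k \leq \max\{\tau_0,L\}$ for all $k\geq 0$ (this is precisely Lemma~\ref{LemmaZOTau}, which I would simply cite).

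Next, fix $k<T(\epsilon)$. By the definition of $T(\epsilon)$, no iterate $x_{k,\ell}(t)$ produced at outer step $k$ satisfies $\|\nabla f(\cdot)+\psi'(\cdot)\|\leq\epsilon$, so the first alternative \eqref{ZOMinGrad} of Lemma~\ref{LemmaZOStep} fails, whence its second alternative holds: equivalently $\alpha_{k,\ell_k}=\texttt{success}$ and Step~4 of Algorithm~\ref{alg:ZO_CNM} gives
\[
F(x_k)-F(x_{k+1}) \;\geq\; \frac{\epsilon^{3/2}}{2\cdot(192)\,\sigma_{k,\ell_k}^{1/2}}\,m .
\]
Then, since $2^{\ell_k}\tau_k = 2\cdot 2^{\ell_k-1}\tau_k \leq 2\tau_{k+1} \leq 2\max\{\tau_0,L\}$ by Lemma~\ref{LemmaZOTau}, formula \eqref{AlgZOSigm} gives $\sigma_{k,\ell_k} \leq 2^{5}(2/3)^{1/3}\,m\,\max\{\tau_0,L\}$, and therefore
\[
F(x_k)-F(x_{k+1}) \;\geq\; \frac{\epsilon^{3/2}\sqrt{m}}{(384)\,2^{5/2}(2/3)^{1/6}\,\max\{\tau_0,L\}^{1/2}}, \qquad k=0,\ldots,T(\epsilon)-1 .
\]

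Finally I would telescope these inequalities and use $F^{\star}\leq F(x_{T(\epsilon)})$ to get
\[
F(x_0)-F^{\star} \;\geq\; T(\epsilon)\cdot \frac{\epsilon^{3/2}\sqrt{m}}{(384)\,2^{5/2}(2/3)^{1/6}\,\max\{\tau_0,L\}^{1/2}},
\]
which rearranges into the claimed bound \eqref{ZOTBound} on $T(\epsilon)$. Substituting this into Lemma~\ref{LemmaZONumber}, and noting that $[4+4mn+6n^2]/\sqrt{m} = \mathcal{O}\big((mn+n^2)/\sqrt{m}\big)$ while $[2+2mn+3n^2]=\mathcal{O}(mn+n^2)$, yields \eqref{ZOTBoundTotal}; the specialization $m:=n$ then gives the $\mathcal{O}(n^{3/2}\epsilon^{-3/2})$ bound on the number of function evaluations, which is optimal in $m$ up to constants for this scheme. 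The second-order statement — a guarantee on $\xi(\cdot)$ in addition to the gradient norm — would be obtained in exactly the same way, reading Lemma~\ref{LemmaZOStep2} in place of Lemma~\ref{LemmaZOStep}, just as Theorem~\ref{TheoremHF2} relates to Theorem~\ref{TheoremHF}.
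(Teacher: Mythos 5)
Your proposal is correct and matches the paper's own argument, which is itself just a one-line reduction to the proof of Theorem~\ref{TheoremHF} via Lemma~\ref{LemmaZOStep}, Lemma~\ref{LemmaZOTau}, and Lemma~\ref{LemmaZONumber}; you have simply written out the details the paper leaves implicit. Note only that your telescoping yields $\max\{\tau_0,L\}^{1/2}$ and $F(x_0)-F^{\star}$ in \eqref{ZOTBound}, consistent with \eqref{HFTBound}, so the exponent $3/2$ and the $f(x_0)-f^{\star}$ appearing in the stated bound are evidently typographical.
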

\proof
Similarly to the proof of Theorem~\ref{TheoremHF},
we get \eqref{ZOTBound} from Lemma~\ref{LemmaZOStep} and Lemma~\ref{LemmaZOTau}.
Then, combining \eqref{ZOTBound} with Lemma~\ref{LemmaZONumber}, we get \eqref{ZOTBoundTotal}.
\qed

\begin{corollary}
By taking $\boxed{m := n}$, it follows from Theorem~\ref{TheoremZO} that Algorithm~\ref{alg:ZeroOrderCNM}
needs at most 
$$
\ba{c}
\mathcal{O}( n^{3/2} \epsilon^{-3/2} )
\ea
$$
function evaluations of $f(\cdot)$ to find a point $\bar{x}$ such that
$\| \nabla f(\bar{x}) + \psi'(\bar{x}) \| \leq \epsilon$.
\end{corollary}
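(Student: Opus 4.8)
The plan is to specialize the general complexity bound \eqref{ZOTBoundTotal} of Theorem~\ref{TheoremZO} to the choice $m := n$, so the only real work is an elementary substitution together with one short optimality check for the value of $m$.

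First I would record why $m = n$ is the right schedule. Writing $s := \sqrt{m}$, the dimension-dependent factor that multiplies $\epsilon^{-3/2}$ in \eqref{ZOTBoundTotal} is
$$
\frac{mn + n^2}{\sqrt{m}} \;=\; n s + \frac{n^2}{s},
$$
which, for fixed $n$, attains its minimum over $s > 0$ at $s^2 = n$, i.e. at $m = n$, with minimal value $2 n^{3/2}$. This balances the $\Theta(n^2)$ function evaluations needed to build one finite-difference Hessian approximation (cf.\ Lemma~\ref{LemmaHessZO} and Step~1.1 of Algorithm~\ref{alg:ZeroOrderCNM}) against the $\Theta(n)$ function evaluations spent per cubic step on the finite-difference gradient (cf.\ Lemma~\ref{LemmaGradZO} and Step~2 of Algorithm~\ref{alg:ZO_CNM}), accumulated over the $m$ reuse steps; hence reusing each Hessian approximation for exactly $n$ steps is optimal, which is why the statement boxes $m := n$.

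Then I would substitute $m = n$ directly into \eqref{ZOTBoundTotal}. Since $mn + 3n^2 = 4n^2$ for $m = n$, this gives
$$
\text{ZO}_{T(\epsilon)} \;\leq\; \mathcal{O}\Bigl( n^{3/2}\,\max\{\tau_0, L\}^{1/2}\,(f(x_0) - f^{\star})\cdot \epsilon^{-3/2} \;+\; n^2 \log_2 \tfrac{\max\{\tau_0, L\}}{\tau_0} \Bigr).
$$
Treating the problem constants $L$, $\tau_0$ and $f(x_0) - f^{\star}$ as fixed, the first term is $\mathcal{O}(n^{3/2}\epsilon^{-3/2})$ and it dominates as $\epsilon \to 0$, which yields the claimed bound. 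There is essentially no obstacle in this argument: the only mildly substantive point is the elementary minimization over $m$ recorded above, and everything else follows by plugging into Theorem~\ref{TheoremZO}.
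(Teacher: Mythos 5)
Your proposal is correct and matches the paper's (implicit) argument: the corollary is obtained by the direct substitution $m = n$ into the bound \eqref{ZOTBoundTotal} of Theorem~\ref{TheoremZO}, under which the dominant factor becomes $\frac{mn + n^2}{\sqrt{m}} = 2n^{3/2}$ and the $\epsilon$-independent logarithmic term is absorbed. Your added observation that $m = n$ minimizes $ns + n^2/s$ over $s = \sqrt{m}$ is a correct and welcome justification of the boxed choice, consistent with the paper's remark that this is the optimal schedule.
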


Finally, we can establish the convergence result in terms of the \textit{second-order stationary point}.
The proof is identical and it just needs to replace Lemma~\ref{LemmaZOStep} by Lemma~\ref{LemmaZOStep2}.

\begin{theorem}
Suppose that A1 holds.
Let $x_{k, \ell}(t)$ be the $t$-th iterate
of Algorithm~\ref{alg:ZO_CNM} 
with extra condition~\eqref{ZOMCon2} 
on the inexact solution to the subproblem, applied at the $k$-th iteration
of Algorithm~\ref{alg:ZeroOrderCNM} in the $\ell$-th inner loop.
Let $T(\epsilon) \leq +\infty$ be the first iteration index such that
$$
\ba{rcl}
\max\Bigl\{ \,
\| \nabla f( x_{T(\epsilon), \ell}(t) ) + \psi'(x_{T(\epsilon), \ell}(t))  \|, \;
\frac{1}{2^2 3^3 \cdot m \cdot \max\{ \tau_0, L \}}
\bigl[ \xi( x_{T(\epsilon), \ell}(t) )  \bigr]^2
\,
\Bigr\} 
& \leq & \epsilon
\ea
$$
for some $\ell \geq 0$ and $t \in \{ 0, \ldots, m \}$.
Then, bounds \eqref{ZOTBound} and \eqref{ZOTBoundTotal} hold.
\end{theorem}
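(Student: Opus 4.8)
The plan is to mirror the proof of Theorem~\ref{TheoremHF} almost verbatim, substituting the zeroth-order ingredients for the first-order ones. The statement asserts a second-order stationarity guarantee: we stop at the first iteration index $T(\epsilon)$ at which some inner iterate $x_{T(\epsilon),\ell}(t)$ satisfies $\max\{\|\nabla f(\cdot)+\psi'(\cdot)\|,\ \frac{1}{2^2 3^3 m \max\{\tau_0,L\}}[\xi(\cdot)]^2\}\leq \epsilon$, and we must show that $T(\epsilon)$ and $\text{ZO}_{T(\epsilon)}$ obey the same bounds \eqref{ZOTBound} and \eqref{ZOTBoundTotal} as in the first-order statement of Theorem~\ref{TheoremZO}.

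First I would observe that, by the definition of $T(\epsilon)$, for every $k = 0,\ldots,T(\epsilon)-1$ and every inner iterate $x_t$ produced at iteration $k$ we have $\max\{\|\nabla f(x_t)+\psi'(x_t)\|,\ \frac{1}{2^2 3^3 m \max\{\tau_0,L\}}[\xi(x_t)]^2\} > \epsilon$. Using $\sigma_{k,\ell_k}\leq 2^5(\tfrac23)^{1/3} m\max\{\tau_0,L\}$ from Lemma~\ref{LemmaZOTau} (as in \eqref{HFSigmaBound}), one checks that $\frac{1}{\sigma}(\tfrac23)^{10/3}[\xi(x_t)]^2 \geq \frac{1}{2^2 3^3 m\max\{\tau_0,L\}}[\xi(x_t)]^2$ up to the absolute constant, so that the quantity $\Delta_t$ appearing in Lemma~\ref{LemmaZOStep2} satisfies $\Delta_t > \epsilon$ for all inner iterates at iterations $k < T(\epsilon)$ — this is the arithmetic check one should spell out, matching the $(\tfrac23)^{10/3}$ versus $2^2 3^3$ bookkeeping. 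Consequently, \eqref{ZOMinGrad2} fails at every such $k$, so the second alternative \eqref{ZOFuncProgress2} of Lemma~\ref{LemmaZOStep2} holds, giving the per-iteration decrease $F(x_k)-F(x_{k+1}) \geq \frac{\epsilon^{3/2}}{2\cdot(192)\sigma_{k,\ell_k}^{1/2}} m$.

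Then I would substitute the bound on $\sigma_{k,\ell_k}$ to get $F(x_k)-F(x_{k+1}) \geq \frac{\epsilon^{3/2}\sqrt m}{(384)2^{5/2}(\tfrac23)^{1/6}\max\{\tau_0,L\}^{1/2}}$, telescope over $k=0,\ldots,T(\epsilon)-1$, and bound the sum below by $-F^\star$ (or $-f^\star$, matching the notation used in the statement of Theorem~\ref{TheoremZO}), which yields \eqref{ZOTBound}. Finally, combining this iteration bound with the oracle-count Lemma~\ref{LemmaZONumber} gives \eqref{ZOTBoundTotal}, exactly as in the proof of Theorem~\ref{TheoremZO}; in fact, since the statement only asks to reuse those two displayed bounds, the last step is a one-line invocation.

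The main obstacle — such as it is — is purely the constant-chasing in matching $\Delta_t$ from Lemma~\ref{LemmaZOStep2} (which is phrased with $\frac1\sigma(\tfrac23)^{10/3}[\xi]^2$) against the normalization $\frac{1}{2^2 3^3 m \max\{\tau_0,L\}}[\xi]^2$ in the theorem statement; one needs $\sigma_{k,\ell_k}$ replaced by its upper bound $2^5(\tfrac23)^{1/3}m\max\{\tau_0,L\}$ and to verify $(\tfrac23)^{10/3}/(2^5(\tfrac23)^{1/3}) \leq 1/(2^2 3^3)$, i.e.\ $(\tfrac23)^{3} \leq 2^3 3^3 \cdot 2^{-5} = 3^3 2^{-2}$, which is clearly true. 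There is no conceptual difficulty: everything reduces to Lemma~\ref{LemmaZOStep2}, Lemma~\ref{LemmaZOTau}, and Lemma~\ref{LemmaZONumber}, exactly as the parenthetical remark before the statement indicates, so the proof can legitimately be given as ``identical to that of Theorem~\ref{TheoremZO}, with Lemma~\ref{LemmaZOStep} replaced by Lemma~\ref{LemmaZOStep2}.''
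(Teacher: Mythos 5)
Your proof takes essentially the same route as the paper, whose entire proof of this theorem is the one-line remark that the argument of Theorem~\ref{TheoremZO} goes through verbatim once Lemma~\ref{LemmaZOStep} is replaced by Lemma~\ref{LemmaZOStep2}; you simply make explicit the constant-matching between the theorem's threshold and the quantity $\Delta_t$ of Lemma~\ref{LemmaZOStep2}. That check is sound --- in fact $2^2 3^3(\tfrac{2}{3})^{10/3}=2^5(\tfrac{2}{3})^{1/3}$ exactly, so the theorem's normalization is precisely $(\tfrac{2}{3})^{10/3}/\sigma_{\max}$ with $\sigma_{\max}=2^5(\tfrac{2}{3})^{1/3}m\max\{\tau_0,L\}$, and the needed inequality $\tfrac{1}{\sigma_{k,\ell}}(\tfrac{2}{3})^{10/3}[\xi]^2\geq\tfrac{1}{2^2 3^3 m\max\{\tau_0,L\}}[\xi]^2$ follows from $\sigma_{k,\ell}\leq\sigma_{\max}$ (note the comparison you want is ``$\geq$'', which holds here with equality at $\sigma=\sigma_{\max}$).
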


\section{Local Superlinear Convergence}
\label{SectionLocal}

One of the main classical results about Newton's Method is
its \textit{local quadratic convergence}, which dates back
to the works of Fine \cite{fine1916newton}, Bennett \cite{bennett1916newton}, 
and Kantorovich \cite{kantorovich1948newton}.
It assumes that the iterates of the method are already in a neighbourhood
of a non-degenerate solution (a strict local minimum $x^{\star}$ satisfying $\nabla^2 f(x^{\star}) \succ 0$),
and it shows importantly that under this condition the method converges very fast.

Later on, a local superlinear convergence of the Newton Method that uses the same Hessian for $m \geq 1$
consecutive steps, where $m$ is a parameter, was established by Shamanskii in \cite{shamanskii1967modification},
and recently in \cite{doikov2023second}. 
The local quadratic convergence 
of the CNM with finite difference Hessian approximations was studied in \cite{grapiglia2022cubic}.

In this section, we justify local superlinear convergence for our implementations of the inexact composite CNM.
To quantify our problem class, we additionally assume the following\footnote{Note that for simplicity we assume here strong convexity for the whole feasible set $Q$, while it can be possible to restrict our analysis to a neighbourhood of a non-degenerate local minimum.}:

\textbf{A2} The Hessian of $f$ is below bounded on $Q$, for some $\mu > 0$:
\beq \label{HessBounded}
\ba{rcl}
\nabla^2 f(x) & \succeq & \mu I, \qquad \forall x \in Q.
\ea
\eeq

It is well known that bound \eqref{HessBounded} means that our composite objective $F(\cdot)$
is \textit{strongly convex} on~$Q$ with parameter $\mu > 0$.
Thus, it has unique minimizer $x^{\star} \in Q$,
and the following standard inequality holds \cite{nesterov2018lectures}:
\beq \label{SolBound}
\ba{rcl}
\| x - x^{\star} \| & \leq & \frac{1}{\mu} \| F'(x) \|, \qquad \forall x \in Q,  \; F'(x) \in \partial F(x).
\ea
\eeq

Let us study one iteration $k \geq 0$ of our first-order CNM (Algorithm~\ref{alg:FirstOrderCNM}).
First, we have the following bounds, for any $\ell \geq 0$:
\beq \label{LocalSigmaHBound}
\ba{rcl}
\sigma_{k, \ell}
& \overset{\textit{Step~1.1}}{=} &
2^4 \bigl(  \frac{2}{3} \bigr)^{1/3} (2^{\ell} \tau_k) m
\;\; \overset{ \textit{Step~4}, \, \eqref{HFTauBound} }{\leq} \;\; 
2^5 \bigl(  \frac{2}{3} \bigr)^{1/3} m\cdot \max\{ \tau_0, L \}, \\
\\
h_{k, \ell} & \overset{\textit{Step~1.1}}{=} &
\Bigl[  \frac{3 \cdot 2^6 ( \frac{2}{3} )^{1/3} m^{3/2} \epsilon^{3/2} }{
				   2^7 (192) n^{3/2} (2^\ell \tau_k)^{3/2}
				}   \Bigr]^{1/3}
\;\; = \;\;
c \cdot \sqrt{\frac{m \epsilon}{2^{\ell} n \tau_k } } \\
\\
& \leq & 
c \cdot \sqrt{\frac{m \epsilon}{n \tau_k } } 
\;\; \overset{\textit{Step~4}}{\leq} \;\;
c \cdot \sqrt{\frac{m \epsilon}{n \tau_0 } },
\ea
\eeq
where $c := \frac{1}{3^{1/6} \cdot 2^{13/6}} $
is a numerical constant.

Let us consider the following set, for a fixed $\epsilon, \kappa > 0$
and some given selection of subgradients $F'(x) \in \partial F(x)$:
\beq \label{BoundSubgr}
\ba{rcl}
\mathcal{Q}_{\epsilon, \kappa} & \Def &
\Bigl\{ \,
x \in Q \; : \; \epsilon \, \leq \, \| F'(x) \| \, \leq \, \frac{\kappa }{2} \, \Bigr\},
\ea
\eeq
where $\kappa$ is the following constant describing the \textit{region of quadratic convergence}:
\beq \label{KappaDef}
\ba{rcl}
\kappa & \Def & \mu^2  \cdot \frac{1}{2} \biggl[
3 \cdot 2^6 \bigl( \frac{2}{3} \bigr)^{1/3} m \cdot \max\{\tau_0, L  \}
+ 8L + \frac{c^2 L^2 m}{\tau_0}
\biggr]^{-1}
\;\; \sim \;\;\; \frac{\mu^2}{mL}.
\ea
\eeq
By \eqref{BoundSubgr}, we assume the desired accuracy $\epsilon$ to be 
sufficiently small:
\beq \label{LocalEpsBound}
\ba{rcl}
\epsilon & \leq & \frac{\kappa}{2} 
\;\; \leq \;\; \frac{\tau_0 \mu^2}{m L^2 c^2}.
\ea
\eeq
Then, we have
\beq \label{LocalHklBound}
\ba{rcl}
h_{k, \ell} & \overset{\eqref{LocalSigmaHBound}}{\leq} &
c \cdot \sqrt{ \frac{m \epsilon}{n \tau_0} }
\;\; \overset{\eqref{LocalEpsBound}}{\leq} \;\;
\frac{\mu}{L\sqrt{n}}.
\ea
\eeq
Therefore, due to Lemma~\ref{LemmaHessFO}, for all Hessian approximations $B_{k, \ell}$
constructed in Algorithm~\ref{alg:FirstOrderCNM}, it holds:
\beq \label{LocalBBound}
\ba{rcl}
\| B_{k, \ell} - \nabla^2 f(x_k)  \| & \overset{\eqref{HessFOBound}}{\leq} & 
\frac{\sqrt{n} L}{2} h_{k, \ell}
\;\; \overset{\eqref{LocalHklBound}}{\leq} \;\; \frac{\mu}{2}.
\ea
\eeq
Taking into account our assumption A2, we conclude that
our Hessian approximations are \textit{always positive definite}:
\beq \label{LocalBStrongConvex}
\ba{rcl}
B_{k, \ell} & \succeq & \frac{\mu}{2} I.
\ea
\eeq
In this case, we can easily bound the lenght of one inexact CNM step, as follows.

\begin{lemma} \label{LemmaStepBound}
	Let $x^+$ be an inexact minimizer of model
	\eqref{ApproxModel} satisfying the following condition:
	\beq \label{InexactThStep2}
	\ba{rcl}
	\| \nabla M_{x, \sigma}(x^+) + \psi'(x^+) \| & \leq & \frac{\sigma}{4}\|x^+ - x\|^2,
	\ea
	\eeq
	for a certain $\psi'(x^+) \in \partial \psi(x^+)$,
	where $g$ satisfy \eqref{GradHessApprox} for some $\delta_g \geq 0$,
	and $B \succeq \frac{\mu}{2}I$.  Then, we have
	\beq \label{CubicStepBound}
	\ba{rcl}
	r \;\; := \;\; \|x^+ - x \| & \leq & 
	\frac{2}{\mu}\Bigl(  \| F'(x) \| + \delta_g  \Bigr),
	\qquad \forall F'(x) \in \partial F(x).
	\ea
	\eeq
\end{lemma}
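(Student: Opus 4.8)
The plan is to exploit the inexact first-order optimality condition for the cubic subproblem together with the lower bound $B \succeq \frac{\mu}{2}I$ to produce a quadratic inequality in $r = \|x^+ - x\|$, and then solve it. First I would recall the explicit stationarity estimate: condition \eqref{InexactThStep2} says that there is a vector $v$ with $\|v\| \leq \frac{\sigma}{4} r^2$ and $v = g + B(x^+ - x) + \frac{\sigma}{2} r (x^+ - x) + \psi'(x^+)$. Rearranging, $B(x^+ - x) = v - g - \frac{\sigma}{2} r (x^+ - x) - \psi'(x^+)$. Taking the inner product with $x^+ - x$ and using $B \succeq \frac{\mu}{2} I$ gives
$$
\frac{\mu}{2} r^2 \;\leq\; \langle B(x^+ - x), x^+ - x\rangle
\;=\; \langle v - g - \psi'(x^+), x^+ - x\rangle - \frac{\sigma}{2} r^3.
$$
Since $-\frac{\sigma}{2} r^3 \leq 0$, I can drop that term, obtaining $\frac{\mu}{2} r^2 \leq \langle v - g - \psi'(x^+), x^+ - x\rangle \leq (\|v\| + \|g\| + \|\psi'(x^+)\|)\, r$, i.e.\ $\frac{\mu}{2} r \leq \|v\| + \|g + \psi'(x^+)\|$ after dividing by $r$ (the case $r = 0$ being trivial).

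Next I would control the right-hand side. For the gradient-type term, I use $\|g + \psi'(x^+)\| \leq \|\nabla f(x) + \psi'(x^+)\| + \delta_g$ via \eqref{GradHessApprox}, but it is cleaner to relate everything to a subgradient of $F$ \emph{at $x$}. Using the monotonicity/convexity of $\psi$, or more directly the optimality characterization, one gets that $\psi'(x^+) \in \partial\psi(x^+)$ can be compared to $\partial\psi(x)$; the subgradient inequality for convex $\psi$ gives $\langle \psi'(x^+) - \psi'(x), x^+ - x\rangle \geq 0$ for any $\psi'(x) \in \partial\psi(x)$, which is exactly the sign I want when I return to the inner-product form. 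So I would redo the inner-product step keeping $\psi'(x)$: from $v = g + B(x^+-x) + \frac{\sigma}{2}r(x^+-x) + \psi'(x^+)$ I subtract $\nabla f(x) + \psi'(x) \in \partial F(x)$, pair with $x^+ - x$, use $B \succeq \frac{\mu}{2}I$, drop the nonnegative cubic term, drop the nonnegative $\langle \psi'(x^+)-\psi'(x), x^+-x\rangle$ term, and bound $\langle g - \nabla f(x), x^+-x\rangle \leq \delta_g r$ and $\langle v, x^+ - x\rangle \leq \frac{\sigma}{4} r^3$. This yields $\frac{\mu}{2} r^2 \leq \frac{\sigma}{4} r^3 + \delta_g r + \|F'(x)\|\, r$, hence $\frac{\mu}{2} r \leq \frac{\sigma}{4} r^2 + \delta_g + \|F'(x)\|$.

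The main obstacle is the quadratic term $\frac{\sigma}{4} r^2$ on the right: naively this only gives a quadratic inequality rather than the clean bound \eqref{CubicStepBound}. The resolution is to note that we are in the \emph{local} regime, where $r$ is already small enough that $\frac{\sigma}{4} r \leq \frac{\mu}{4}$, so $\frac{\sigma}{4} r^2 \leq \frac{\mu}{4} r$ can be absorbed into the left-hand side, leaving $\frac{\mu}{4} r \leq \delta_g + \|F'(x)\|$, which gives \eqref{CubicStepBound} with a slightly worse constant — or, tracking constants more carefully (e.g.\ using that the functional-decrease condition \eqref{XplusCond}, not hypothesized here, would pin $r$ down), one recovers the stated factor $\frac{2}{\mu}$. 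Alternatively, and more likely what the authors intend: one pairs with $x^+ - x$ but uses the \emph{full} expression including $\frac{\sigma}{2} r (x^+ - x)$ retained with its sign, so that $\langle v, x^+-x\rangle - \langle g - \nabla f(x), x^+-x\rangle \leq \frac{\sigma}{4}r^3$ is compared against $\frac{\mu}{2}r^2 + \frac{\sigma}{2}r^3 - \|F'(x)\| r \leq \langle B(x^+-x) + \frac{\sigma}{2}r(x^+-x), x^+-x\rangle - \langle F'(x), x^+-x\rangle$; then $\frac{\sigma}{2}r^3 - \frac{\sigma}{4}r^3 = \frac{\sigma}{4}r^3 \geq 0$ can be dropped, giving directly $\frac{\mu}{2} r^2 \leq \delta_g r + \|F'(x)\| r$ and hence $r \leq \frac{2}{\mu}(\|F'(x)\| + \delta_g)$ with no smallness assumption on $r$ needed at all. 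I would present this last version, since it is the cleanest and yields exactly \eqref{CubicStepBound}.
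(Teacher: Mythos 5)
Your final version (the one you say you would present) is correct and is essentially the paper's own proof: pair the inexactness condition \eqref{InexactThStep2} with $x^+ - x$, use $B \succeq \frac{\mu}{2}I$ and retain the $\frac{\sigma}{2}r^3$ term so that the net cubic contribution $\frac{\sigma}{4}r^3 \geq 0$ can be dropped, pass from $\psi'(x^+)$ to $\psi'(x)$ by monotonicity of $\partial\psi$, and bound the remaining terms by $(\|F'(x)\| + \delta_g)\,r$. The earlier detours (absorbing $\frac{\sigma}{4}r^2$ via a smallness assumption on $r$) are unnecessary, as you correctly recognized.
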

\proof
Indeed, we get that
\beq \label{StepLemma1}
\ba{rcl}
\frac{\sigma r^3}{4} & \overset{\eqref{InexactThStep2}}{\geq} &
\la \nabla M_{x, \sigma}(x^+) + \psi'(x^+), x^+ - x \ra \\
\\
& = &
\la g + B(x^+ - x) + \frac{\sigma}{2}r(x^+ - x) + \psi'(x^+), x^+ - x \ra \\
\\
& \geq & \la g + \psi'(x^+), x^+ - x \ra
+ \frac{\mu r^2}{2} + \frac{\sigma r^3}{2}.
\ea
\eeq
Hence, rearranging the terms and using convexity of $\psi$, we obtain,
for any $\psi'(x) \in \partial \psi(x)$,
$$
\ba{rcl}
\frac{\mu r^2}{2} & \overset{\eqref{StepLemma1}}{\leq} & 
\la g + \psi'(x^+), x - x^+ \ra
- \frac{\sigma r^3}{4}
\;\; \leq \;\;
\la g + \psi'(x), x - x^+ \ra - \frac{\sigma r^3}{4} \\
\\
& \leq & r \Bigl( \| \nabla f(x) + \psi'(x) \| + \delta_g \Bigr)
\;\; = \;\;
r \Bigl( \| F'(x) \| + \delta_g \Bigr),
\ea
$$
which is \eqref{CubicStepBound}.
\qed

Now, let us look at the local progress given by one inexact CNM step $x \mapsto x^+$,
with anchor point $z := x_k$.
Assuming that $x \in \mathcal{Q}_{\epsilon, \kappa}$
and under assumptions of Lemma~\ref{LemmaNewGrad}
with $\theta = \frac{\sigma_{k, \ell} }{4}$, $\delta_g = 0$, and 
$\delta_B  \overset{\eqref{HessFOBound}}{=} 
\frac{\sqrt{n} L}{2} h_{k, \ell}$, 
we get
\beq \label{LocalOneStepProgress}
\ba{rcl}
\| F'(x^+) \| & \overset{\eqref{NewGradBound} }{\leq} & 
\Bigl( \frac{3}{4} \sigma_{k, \ell} + \frac{L}{2}  \Bigr)
r^2
+ \Bigl(  \frac{\sqrt{n} L}{2} h_{k, \ell}  + L\|x -  x_k \|\Bigr)
r \\
\\
& \overset{\eqref{LocalSigmaHBound}}{\leq} &  
\Bigl( \frac{3}{4} \sigma_{k, \ell} + \frac{L}{2}  \Bigr)
r^2 
+ \Bigl( \frac{cL}{2}\sqrt{\frac{m}{\tau_0}} \cdot \sqrt{\epsilon}
+ L\|x - x_k \| \Bigr) r \\
\\
& \leq & 
\Bigl( \frac{3}{4} \sigma_{k, \ell} + \frac{L}{2} + \frac{c^2 L^2 m}{8 \tau_0}  \Bigr)
r^2 
+ L r \|x - x^{\star} \|
+ L r \|x_k - x^{\star} \| + \frac{\epsilon}{2}
\\
\\
& \overset{\eqref{CubicStepBound}, \eqref{SolBound}}{\leq} &
\frac{1}{\mu^2} \Bigl( 3 \sigma_{k, \ell} + 4L + \frac{c^2 L^2 m}{2\tau_0}  \Bigr) \| F'(x) \|^2
+ \frac{2L}{\mu^2} \| F'(x) \| \cdot \| F'(x_k) \| + \frac{\epsilon}{2}.
\ea
\eeq
We see that the first term in the right hand side of \eqref{LocalOneStepProgress}  is responsible for the local
quadratic convergence in terms of the (sub)gradient norm, as in the classical Newton's Method,
and the last two terms appear due to the inexactness of our Hessian approximations.
It remains to combine all our observations together.

\begin{theorem}
	Suppose that A1 and A2 hold. Let $x_0 \in \mathcal{Q}_{\epsilon, \kappa}$,
	given by \eqref{BoundSubgr} and $\kappa$ given by \eqref{KappaDef}. Let $\{ x_k \}_{k \geq 1}$
	be generated by Algorithm~\ref{alg:FirstOrderCNM} and denote by $T(\epsilon) \leq +\infty$
	be the first iteration index such that $\| \nabla f( x_{T(\epsilon)} ) + \psi'(x_{T(\epsilon)}) \| \leq \epsilon$,
	for a certain $\psi'(x_{T(\epsilon)}) \in \partial \psi(x_{T(\epsilon)})$.
	We have
	\beq \label{LocalTBound}
	\ba{rcl}
	T(\epsilon) & \leq & \frac{1}{\log_2(1 + m)} \log_2 \log_2 \frac{\kappa}{\epsilon} + 1.
	\ea
	\eeq
	\proof
	By the definition of $T(\epsilon)$, we have
	$$
	\ba{rcl}
	\| F'(x_k) \| \;\; \equiv \;\;
	\| \nabla f(x_k) + \psi'(x_k) \| & \geq & \epsilon, \quad \text{for} \quad
	k = 0, \ldots, T(\epsilon) - 1,
	\ea
	$$
	and for all iterations generated by Algorithm~\ref{alg:HessianFree}
	launched from Algorithm~\ref{alg:FirstOrderCNM}.
	We prove by induction that
	\beq \label{LocalGkBound}
	\ba{rcl}
	\frac{1}{\kappa} \| F'(x_k) \| & \leq & \bigl( \frac{1}{2}  \bigr)^{(1 + m)^k + 1},
	\qquad k = 0, \ldots, T(\epsilon) - 1,
	\ea
	\eeq
	which immediately leads to the desired bound.
	
	For $k = 0$, inequality \eqref{LocalGkBound} holds due to our assumption: 
	$x_0 \in \mathcal{Q}_{\epsilon, \kappa}$, and this is the base of our induction.
	Assume that it holds for some $k \geq 0$, and consider one iteration of Algorithm~\ref{alg:FirstOrderCNM}.
	In \textit{Step 1.2} it runs \texttt{CubicSteps} (Algorithm~\ref{alg:HessianFree})
	and will do the adaptive search until gets status $\alpha_{k, \ell} = \texttt{success}$
	($\alpha_{k, \ell} = \texttt{solution}$ is impossible by our assumption).
	
	Hence, $x_{k + 1}$ will be computed as $m$ inexact Cubic steps performed from 
	the point $x_k$. Denoting these steps by $x_k^{0} \mapsto x_k^{1} \mapsto \ldots \mapsto x_k^{m}$
	($x_k^{0} \equiv x_k$ and $x_k^{m} \equiv x_{k + 1}$), we conclude that, for each $0 \leq t \leq m - 1$:
	\beq \label{LocalProgress2}
	\ba{rcl}
	\| F'(x_k^{t + 1}) \| & \overset{\eqref{LocalOneStepProgress}, \eqref{LocalSigmaHBound}}{\leq} & 
	\frac{1}{2\kappa} \Bigl( \| F'(x_k^t) \|^2 + g_k \| F'(x_k^t) \| \Bigr) + \frac{\epsilon}{2} \\
	\\
	& \leq & \frac{1}{2\kappa} \Bigl( \| F'(x_k^t) \|^2 + \| F'(x_k^0) \|  \| F'(x_k^t) \| \Bigr) + \frac{1}{2} \| F'(x_k^{t + 1}) \|.
	\ea
	\eeq
	Now, assuming that 
	\beq \label{Induct2}
	\ba{rcl}
	\frac{1}{\kappa} \| F'(x_k^t) \| & \leq & \bigl( \frac{1}{2}  \bigr)^{(1 + t)(1 + m)^k + 1}
	\ea
	\eeq
	(which holds for $t = 0$ by \eqref{LocalGkBound}),
	we have
	$$
	\ba{cl}
	& \frac{1}{\kappa} \| F'(x_k^{t + 1}) \| \;\; \overset{\eqref{LocalProgress2}}{\leq} \;\;
	\frac{1}{\kappa} \Bigl(  \| F'(x_k^t) \| + \|F'(x_k^0)  \| \Bigr) \cdot \frac{1}{\kappa} \| F'(x_k^t) \| \\
	\\
	& \overset{\eqref{Induct2}}{\leq} \;\;
	\Bigl( 
	\bigl( \frac{1}{2} \bigr)^{ (1 + t)(1 + m)^k + 1} + \bigl( \frac{1}{2} \bigr)^{ (1 + m)^k + 1} 
	\Bigr) \cdot \bigl(\frac{1}{2} \bigr)^{ (1 + t)(1 + m)^k + 1} 
	\;\; \leq \;\;
	\bigl(\frac{1}{2} \bigr)^{ (1 + t + 1)(1 + m)^k + 1}.
	\ea
	$$
	Thus, \eqref{Induct2} holds for all $0 \leq t \leq m$, and for $t = m$ it gives \eqref{LocalGkBound} 
	for the next iterate.
	\qed
\end{theorem}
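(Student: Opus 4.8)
The plan is to show that, until the stopping criterion is met, the subgradient norm $\|F'(x_k)\|$ contracts doubly-exponentially, so that only a doubly-logarithmic number of outer iterations can occur. Write $F'(x)\in\partial F(x)$ for the selection appearing in the definition \eqref{BoundSubgr} of $\mathcal{Q}_{\epsilon,\kappa}$, and recall that by definition of $T(\epsilon)$ we have $\|F'(x_k)\|\geq\epsilon$ for every $k<T(\epsilon)$. I would first observe that, under this lower bound together with the standing smallness requirement \eqref{LocalEpsBound} on $\epsilon$, the inner routine {\ttfamily CubicSteps} called in Step~1.2 of Algorithm~\ref{alg:FirstOrderCNM} can only stop with status {\ttfamily success} (the status {\ttfamily solution} is excluded while $\|F'\|>\epsilon$), so the $k$-th outer iteration genuinely performs $m$ inexact Cubic steps $x_k=x_k^0\mapsto x_k^1\mapsto\cdots\mapsto x_k^m=x_{k+1}$, all sharing the anchor $z:=x_k$ and an approximate Hessian $B_{k,\ell_k}\succeq\tfrac{\mu}{2}I$ by \eqref{LocalBStrongConvex}. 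In particular Lemma~\ref{LemmaStepBound} (with $\delta_g=0$, since the first-order method uses exact gradients) and the one-step estimate \eqref{LocalOneStepProgress} apply to every one of these inner steps.

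Next I would turn \eqref{LocalOneStepProgress} into a clean recursion: inserting the upper bounds \eqref{LocalSigmaHBound} on $\sigma_{k,\ell}$ and $h_{k,\ell}$ and the very definition \eqref{KappaDef} of $\kappa$, the coefficient of the quadratic term is at most $\tfrac{1}{2\kappa}$, the coefficient of the cross term $\|F'(x_k^t)\|\,\|F'(x_k)\|$ is also at most $\tfrac{1}{2\kappa}$, and the additive perturbation is $\tfrac{\epsilon}{2}$; since $\epsilon\leq\|F'(x_k^{t+1})\|$, that last term is absorbed into the left-hand side, which leaves, for $0\leq t\leq m-1$,
\[
\frac{1}{\kappa}\|F'(x_k^{t+1})\|\;\leq\;\Bigl(\frac{1}{\kappa}\|F'(x_k^t)\|+\frac{1}{\kappa}\|F'(x_k^0)\|\Bigr)\cdot\frac{1}{\kappa}\|F'(x_k^t)\|.
\]
From here the argument runs on two nested inductions: an outer one over $k$ with hypothesis $\tfrac{1}{\kappa}\|F'(x_k)\|\leq(\tfrac12)^{(1+m)^k+1}$, whose base case $k=0$ is the assumption $x_0\in\mathcal{Q}_{\epsilon,\kappa}$; and, inside a fixed outer step, an inner one over $t$ with hypothesis $\tfrac{1}{\kappa}\|F'(x_k^t)\|\leq(\tfrac12)^{(1+t)(1+m)^k+1}$, whose base case $t=0$ is exactly the outer hypothesis. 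Feeding both hypotheses into the displayed recursion and bounding the sum of the two terms by twice its larger summand, the exponent picks up one additional copy of $(1+m)^k$ at each inner step, so at $t=m$ one obtains $\tfrac{1}{\kappa}\|F'(x_{k+1})\|\leq(\tfrac12)^{(2+m)(1+m)^k+1}\leq(\tfrac12)^{(1+m)^{k+1}+1}$, which closes the outer induction.

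It then remains to conclude. Combining $\tfrac{1}{\kappa}\|F'(x_k)\|\leq(\tfrac12)^{(1+m)^k+1}$ with $\|F'(x_k)\|\geq\epsilon$ for $k=0,\dots,T(\epsilon)-1$ forces $(1+m)^k\leq\log_2(\kappa/\epsilon)$ for every such $k$; taking $k=T(\epsilon)-1$ and solving for $T(\epsilon)$ yields $T(\epsilon)\leq\frac{1}{\log_2(1+m)}\log_2\log_2\frac{\kappa}{\epsilon}+1$, which is \eqref{LocalTBound}. The delicate part of the proof, and the one I would treat most carefully, is the bookkeeping of the nested induction together with the verification that the finite-difference Hessian error $\delta_B=\tfrac{\sqrt{n}L}{2}h_{k,\ell}$ and the ``lazy'' anchor term $L\|x_k^t-x_k\|$ in \eqref{LocalOneStepProgress} are dominated rather than accumulating: this is precisely what the particular choice of $\kappa$ in \eqref{KappaDef} and the smallness condition \eqref{LocalEpsBound} on $\epsilon$ are engineered for, and it is what allows the residual $\epsilon/2$ to be swallowed at every step instead of piling up.
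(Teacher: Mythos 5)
Your proposal is correct and follows essentially the same route as the paper: the same absorption of the $\epsilon/2$ term using $\|F'(x_k^{t+1})\|\geq\epsilon$, the same nested double induction with hypotheses $\frac{1}{\kappa}\|F'(x_k)\|\leq(\frac12)^{(1+m)^k+1}$ and $\frac{1}{\kappa}\|F'(x_k^t)\|\leq(\frac12)^{(1+t)(1+m)^k+1}$, and the same final conversion to the doubly-logarithmic bound. The only blemish is a harmless off-by-one in your exponent at $t=m$ (it should read $(1+m)(1+m)^k+1=(1+m)^{k+1}+1$ rather than $(2+m)(1+m)^k+1$), which does not affect the direction of the inequality or the conclusion.
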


Finally, let us discuss the local superlinear convergence 
for our derivative-free CNM (Algorithm~\ref{alg:ZeroOrderCNM}),
while the analysis remains similar to the Hessian-free version.
For the derivative-free method, we have, for a fixed iteration $k \geq 0$
and for any $\ell \geq 0$:
\beq \label{ZOHBounds}
\ba{rcl}
\sigma_{k, \ell} & \overset{\textit{Step 1.1}}{=} & 2^4 \bigl( \frac{2}{3} \bigr)^{1/3} (2^\ell \tau_k) m
\;\; \overset{\textit{Step 3}, \;\eqref{AlgZOTauKBound}}{\leq} \;\;
2^5 \bigl(\frac{2}{3} \bigr)^{1/3} m \cdot \max\{ \tau_0, L \}, \\
\\
h_{k, \ell} 
& \overset{\textit{Step 1.1}}{=} &
\Bigl[  \bigl(\frac{2}{3}\bigr)^{1/2} 
\frac{3^3 \epsilon^{3/2} m^{3/2} }{2^{14} n^3 (2^{\ell} \tau_k)^{3/2}} \Bigr]^{1/3}
\;\; \overset{\textit{Step 3}}{\leq} \;\;  \frac{c_B \epsilon^{1/2} m^{1/2}}{n \tau_0^{1/2}},
\quad \text{where} \quad c_B \; := \; \frac{3}{2^{14/3}} \cdot \bigl( \frac{2}{3} \bigr)^{1/6},
\ea
\eeq
and in each call of Algorithm~\ref{alg:ZO_CNM}, the gradient finite difference parameter is
\beq \label{ZOHGBound}
\ba{rcl}
h_g & \overset{\textit{Step 2}}{=} &  
\frac{1}{3^{1/3}} \Bigl[  \frac{\epsilon m}{\sigma_{k,\ell} n^{1/2}} \Bigr]^{1/2}
\;\; \leq \;\; c_g \cdot \frac{\epsilon^{1/2}}{n^{1/4} \tau_0^{1/2}},
\quad \text{where} \quad c_g \; := \; \frac{1}{3^{1/3} 2^2 (2/3)^{1/6}}.
\ea
\eeq
Therefore, due to Lemma~\ref{LemmaGradZO} and \ref{LemmaHessZO},
all our gradient and Hessian approximations used in Algorithms~\ref{alg:ZO_CNM} and~\ref{alg:ZeroOrderCNM} 
satisfy the following guarantees:
\beq \label{GradHessGuarantees}
\ba{rcl}
\| g_t - \nabla f(x_t)\| & \overset{\eqref{GradZOBound}}{\leq} &
\frac{\sqrt{n} L}{6} h_g^2
\;\; \overset{\eqref{ZOHGBound}}{\leq} \;\;
\frac{c_g^2}{6} \cdot \frac{ \epsilon L}{\tau_0},
\\
\\
\| B_{k, \ell} - \nabla^2 f(x_k)  \| & \overset{\eqref{HessZOBBound}}{\leq} & 
\frac{2nL}{3} h_{k,\ell}
\;\; \overset{\eqref{ZOHBounds}}{\leq} \;\;
\frac{2 c_B}{3} \cdot \frac{\epsilon^{1/2} m^{1/2} L}{\tau_0^{1/2}}.
\ea
\eeq
In particular, assuming that $\epsilon$ is sufficiently small \eqref{LocalEpsBound},
we ensure $\| B_{k, \ell} - \nabla^2 f(x_k) \| \leq \frac{\mu}{2}$, and hence
our Hessian approximations are positive definite: $B_{k, \ell} \succeq \frac{\mu}{2} I$.

Let us assume that the initial regularization parameter is sufficiently big:
\beq \label{Tau0Big}
\ba{rcl}
\tau_0 & \geq & \frac{2 c_g^2 L}{3}.
\ea
\eeq
Using Lemma~\ref{LemmaStepBound}, we can bound one (zeroth-order) inexact CNM step $x \mapsto x^+$
for a point $x \in Q_{\epsilon, \kappa}$, as follows:
\beq \label{ZORBound}
\ba{rcl}
r & := & \|x^+ - x\| 
\;\; \overset{\eqref{CubicStepBound}, \eqref{GradHessGuarantees}}{\leq} \;\;
\frac{2}{\mu} \Bigl(  \| F'(x)\| + \frac{c_g^2}{6} \cdot \frac{\epsilon L}{\tau_0}  \Bigr) \\
\\
& \overset{\eqref{Tau0Big}}{\leq} &
\frac{2}{\mu} \Bigl(  \| F'(x)\|  + \frac{\epsilon}{4} \Bigr)
\;\; \overset{\eqref{BoundSubgr}}{\leq} \;\;
\frac{5}{2\mu} \|F'(x) \|.
\ea
\eeq
It remains to apply Lemma~\ref{LemmaNewGrad}
with $\theta = \frac{\sigma_{k,\ell}}{4}$, $\delta_g =\frac{c_g^2 \epsilon L}{6\tau_0}  \overset{\eqref{Tau0Big}}{\leq} \frac{\epsilon}{4}$,
$\delta_B = \frac{2C_B \epsilon^{1/2}m^{1/2}L}{3\tau_0^{1/2}}$
and anchor point $z := x_k$. We obtain
\beq \label{ZOLocalGrad}
\ba{rcl}
\| F'(x^+) \| & \overset{\eqref{NewGradBound}}{\leq} &
\Bigl( \frac{3}{4} \sigma_{k, \ell} + \frac{L}{2}  \Bigr) r^2
+ \Bigl(  \frac{2c_B}{3} \cdot \frac{\epsilon^{1/2} m^{1/2} L}{\tau_0^{1/2}} r + L\|x - x_k\|   \Bigr)r
+ \frac{\epsilon}{4} \\
\\
& \leq & 
\Bigl(  \frac{3}{4} \sigma_{k, \ell} + \frac{L}{2} + \frac{4c_B^2 mL^2}{9\tau_0}   \Bigr) r^2
+ Lr\|x - x^{\star} \| + Lr\|x_k - x^{\star}\| + \frac{\epsilon}{2} \\
\\
& \overset{\eqref{ZORBound}, \eqref{SolBound}}{\leq} &
\frac{1}{\mu^2}
\Bigl( 
\frac{75}{4} \sigma_{k, \ell} + \frac{45L}{8}
+ \frac{4 c_B^2 mL^2}{9\tau_0}
\Bigr) \| F'(x) \|^2
+ \frac{5L}{2\mu^2} \|F'(x) \| \cdot \| F'(x_k) \| + \frac{\epsilon}{2}.
\ea
\eeq
We see that this inequality has the same structure
as \eqref{LocalOneStepProgress} established for the Hessian-free CNM.
Applying bound \eqref{ZOHBounds}, it is easy to verify that we can use the same local region,
given by \eqref{BoundSubgr}, \eqref{KappaDef}.
Therefore, repeating the previous reasoning, we prove the following local superlinear convergence.

\begin{theorem}
	Suppose that A1 and A2 hold. Let $x_0 \in \mathcal{Q}_{\epsilon, \kappa}$,
	given by \eqref{BoundSubgr} and $\kappa$ given by \eqref{KappaDef}. 
	Let initial regularization parameter $\tau_0$ be sufficiently big~\eqref{Tau0Big}.
	Let $x_{k, \ell}(t)$ be the $t$-th iterate of Algorithm~\ref{alg:ZO_CNM}
	applied at the $k$-th iteration of Algorithm~\ref{alg:ZeroOrderCNM} 
	in the $\ell$-th inner loop. Let $T(\epsilon) \leq +\infty$ be the first
	iteration index such that
	$\| \nabla f( x_{T(\epsilon), \ell} ) + \psi'(x_{T(\epsilon), \ell}) \| \leq \epsilon$,
	for some $\ell \geq 0$ and  $t \in \{0, \ldots\, m\}$. Then,
	\beq \label{LocalTBoundZO}
	\ba{rcl}
	T(\epsilon) & \leq & \frac{1}{\log_2(1 + m)} \log_2 \log_2 \frac{\kappa}{\epsilon} + 1.
	\ea
	\eeq
\end{theorem}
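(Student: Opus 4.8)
The plan is to mirror the proof of the analogous local convergence theorem for the Hessian-free method (the preceding theorem in this section), since inequality \eqref{ZOLocalGrad} has exactly the same structure as \eqref{LocalOneStepProgress}. The key observation, already noted in the paragraph preceding the statement, is that after plugging in the bound \eqref{ZOHBounds} on $\sigma_{k,\ell}$, the coefficients appearing in \eqref{ZOLocalGrad} are all of the form $\Theta(m L \cdot \max\{\tau_0, L\})$ for the quadratic term and $\Theta(L)$ for the cross term, which are precisely of the order that is absorbed into the definition \eqref{KappaDef} of $\kappa$. So the first thing I would check is that, for $x \in \mathcal{Q}_{\epsilon,\kappa}$, inequality \eqref{ZOLocalGrad} implies the contraction
\[
\| F'(x^+) \| \;\leq\; \frac{1}{2\kappa}\Bigl( \|F'(x)\|^2 + \|F'(x_k)\| \cdot \|F'(x)\| \Bigr) + \frac{1}{2}\|F'(x^+)\|,
\]
i.e. exactly the same recursion as the first line of \eqref{LocalProgress2}, possibly after adjusting the numerical constant inside $\kappa$ (which only affects hidden constants and the asymptotic relation $\kappa \sim \mu^2/(mL)$ stays valid). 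Here I must use $\tau_0$ large enough — both \eqref{Tau0Big} and the smallness condition \eqref{LocalEpsBound} on $\epsilon$ — to guarantee $\delta_g \le \epsilon/4$ and $B_{k,\ell} \succeq \frac{\mu}{2} I$, so that Lemma~\ref{LemmaStepBound} and hence \eqref{ZORBound} are applicable.

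Once this one-step contraction is in place, the rest is a verbatim repetition of the induction argument from the Hessian-free theorem. First I would note that, since $\|F'(x_k)\| \ge \epsilon$ for $k = 0, \ldots, T(\epsilon)-1$, the adaptive search in Step~1.2 of Algorithm~\ref{alg:ZeroOrderCNM} always terminates with status \texttt{success} (it can never return \texttt{solution}, as no such status exists for \texttt{ZeroOrderCubicSteps}, and the {\ttfamily halt} status is excluded by Lemma~\ref{LemmaZOStep} once $\sigma_{k,\ell}$ exceeds the threshold). Then $x_{k+1}$ is produced by $m$ inexact cubic steps $x_k^0 \mapsto x_k^1 \mapsto \cdots \mapsto x_k^m$ with $x_k^0 \equiv x_k$, $x_k^m \equiv x_{k+1}$, each obeying the above contraction (with $x_k^0 = x_k$ as the anchor point in the cross term). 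The inner induction on $t$ shows that if $\frac{1}{\kappa}\|F'(x_k^t)\| \le (1/2)^{(1+t)(1+m)^k + 1}$ then $\frac{1}{\kappa}\|F'(x_k^{t+1})\| \le (1/2)^{(2+t)(1+m)^k + 1}$, using that the sum of two terms each at most $(1/2)^{(1+m)^k+1}$ times $(1/2)^{(1+t)(1+m)^k+1}$ is at most $(1/2)^{(2+t)(1+m)^k+1}$. At $t = m$ this gives the outer induction step $\frac{1}{\kappa}\|F'(x_{k+1})\| \le (1/2)^{(1+m)^{k+1}+1}$, with base case $k=0$ from the hypothesis $x_0 \in \mathcal{Q}_{\epsilon,\kappa}$.

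Finally, from $\frac{1}{\kappa}\|F'(x_k)\| \le (1/2)^{(1+m)^k+1}$ and the requirement $\|F'(x_k)\| \ge \epsilon$, I would conclude $(1+m)^k \le \log_2(\kappa/\epsilon)$, hence $k \le \frac{1}{\log_2(1+m)}\log_2\log_2\frac{\kappa}{\epsilon}$, which yields the claimed bound \eqref{LocalTBoundZO} on $T(\epsilon)$ after adding one. The main (and really only) obstacle is the bookkeeping in the first paragraph: one must carefully track how the numerical constants $c_B$, $c_g$ from \eqref{ZOHBounds}--\eqref{ZOHGBound} and the bound \eqref{GradHessGuarantees} on $\delta_B$ feed into the coefficient $\frac{75}{4}\sigma_{k,\ell} + \frac{45L}{8} + \frac{4c_B^2 m L^2}{9\tau_0}$ in \eqref{ZOLocalGrad}, and verify that this quantity divided by $\mu^2$ is at most $\frac{1}{2\kappa}$ for the $\kappa$ in \eqref{KappaDef} — possibly requiring the constant in \eqref{KappaDef} to be interpreted as "up to an absolute factor", which the "$\sim$" notation there already allows. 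Everything else is a line-by-line transcription of the preceding proof.
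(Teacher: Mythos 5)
Your proposal matches the paper's own argument: the paper likewise derives the one-step inequality \eqref{ZOLocalGrad}, observes it has the same structure as \eqref{LocalOneStepProgress}, and then simply states that the induction from the Hessian-free theorem can be repeated verbatim with the same region $\mathcal{Q}_{\epsilon,\kappa}$ and the same $\kappa$. Your extra care about tracking the numerical constants $c_B$, $c_g$ into the coefficient of the quadratic term is exactly the step the paper dismisses as ``easy to verify,'' so the two proofs are essentially identical.
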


\section{Illustrative Numerical Experiments}
\label{SectionExperiments}

We performed preliminary numerical experiments with Matlab implementations of the proposed methods applied to the set of 35 problems from the Mor\'e-Garbow-Hillstrom collection \cite{more1981testing}\footnote{For each problem, $n$ was chosen as in \cite{birgin2020use}, resulting in a set of problems with dimensions ranging from $2$ to $40$.} For both algorithms, we considered $\tau_{0}=1$ and $\epsilon=10^{-4}$, allowing a maximum of $3,000$ calls of the oracle. Moreover, each cubic subproblem was approximately solved by a BFGS method with Armijo line search (using the origin as initial point).

Figure~\ref{fig:01} presents the performance profiles \cite{dolan2002benchmarking}\footnote{The performance profiles were generated using the code \textbf{perf.m} freely available in the website \url{https://www.mcs.anl.gov/~more/cops/}.} for Algorithm~\ref{alg:FirstOrderCNM}, comparing the variants with $m=1$, $m=n$ and $m=2n$ in terms of the number of calls of the oracle required to find the first $\epsilon$-approximate stationary point. For each value $x$ in x-axis, we show in y-axis percentage of the problems for which the corresponding code performs with a factor $2^x$ of the best performance among all the methods. 
In accordance with our theory, $m=n$ resulted in the best performance, with the corresponding code requiring less calls of the oracle in $48.6\%$ of the problems.
\begin{figure}[h!]
	\centering
	\includegraphics[scale=0.43]{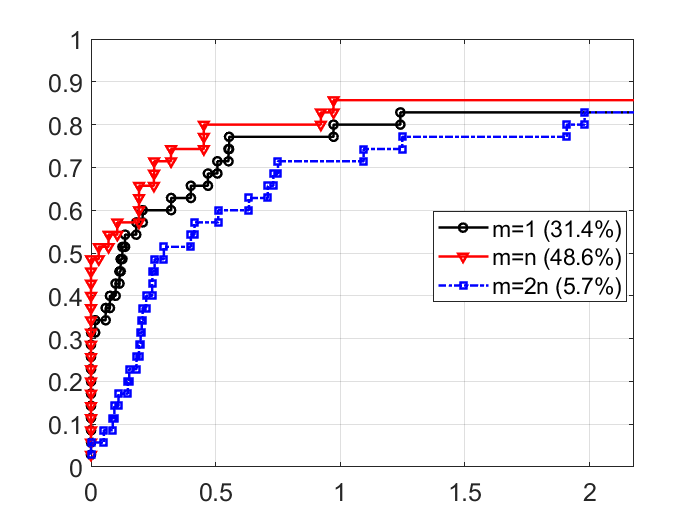}
	\caption{Performance profiles in $\log_{2}$ scale for Algorithm~\ref{alg:FirstOrderCNM}. For each choice of $m$, the caption indicates the percentage of problems in which the corresponding code was the best in terms of number of calls of the oracle.}
	\label{fig:01}
\end{figure}

We performed similar experiments with Algorithm~\ref{alg:ZeroOrderCNM}, comparing the choices $m=1$, $m=n$ and $m=2n$ in terms of the number of function evaluations required to find $\bar{x}$ such that
\begin{equation*}
\ba{rcl}
f(\bar{x})-f_{best} & \leq & \epsilon\left(f(x_{0})-f_{best}\right).
\ea
\end{equation*}
For each problem, $f_{best}$ is the smallest value of the objective function obtained 
by applying the three variants of Algorithm~\ref{alg:ZeroOrderCNM} with a budget of $3,000$ function evaluations. Figure~\ref{fig:02} presents the corresponding performance profiles. Again, the variant with $m=n$ outperformed the others, requiring less function evaluations in $60.0\%$ of the problems.
\begin{figure}[h!]
	\centering
	\includegraphics[scale=0.43]{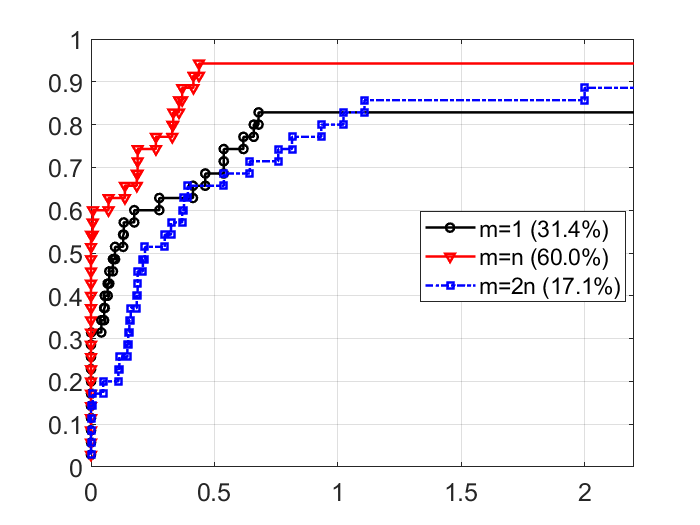}
	\caption{Performance profiles in $\log_{2}$ scale for Algorithm~\ref{alg:ZeroOrderCNM}.}
	\label{fig:02}
\end{figure}

\section{Discussion}
\label{SectionDiscussion}

In this paper, we have developed new first-order
and zeroth-order 
implementations of the Cubically regularized Newton method,
that need, respectively, at most $\mathcal{O}( n^{1/2} \epsilon^{-3/2} )$
and $\mathcal{O}( n^{3/2} \epsilon^{-3/2} )$
calls of the oracle to find an $\epsilon$-approximate second-order stationary point.
Along with improved complexity guarantees, one of the main advantages of our schemes
is the adaptive search, which makes the algorithms free from the need to fix the actual Lipschitz constant
and the finite-difference approximation parameters.

While in this work we study the general class of non-convex optimization problems,
it can be interesting to investigate the global performance of our methods for convex objectives.
Indeed, it is well-known that, when the problem is convex, 
the rate of minimizing the gradient norm can be improved
and the methods can be accelerated \cite{grapiglia2019accelerated}.
Hence, it seems to be an important direction for future research to study the complexities
of first-order and zeroth-order regularized Newton schemes in convex case.

Another interesting question is related to comparison of our new schemes
with derivative-free implementation of the first-order and direct-search methods
\cite{nesterov2017random,bergou2020stochastic,gratton2015direct,grapiglia2023worst}.
These methods need at most $\mathcal{O}( n \epsilon^{-2} )$ function evaluations
to find a first-order $\epsilon$-stationary point 
(in expectation or with high probability for  stochastic methods \cite{nesterov2017random,bergou2020stochastic,gratton2015direct},
or in terms of the full gradient norm for a deterministic method \cite{grapiglia2023worst}).
We see that bound $\mathcal{O}( n \epsilon^{-2} )$ is worse than ours $\mathcal{O}( n^{3/2} \epsilon^{-3/2} )$
in terms of dependence on~$\epsilon$, but has a better dimension factor.
However, note that these complexity bounds are obtained for \textit{different problem classes},
assuming either the first or second derivative to be Lipschitz continuous.
Therefore, the development of universal schemes that can automatically achieve the best possible complexity bounds across various problem classes appears to be important, both from practical and theoretical perspectives.
We keep these questions for further research.

\bibliographystyle{plain}
\bibliography{bibliography}

\end{document}